\title{Circularly compatible ones, $D$-circularity,
\\and proper circular-arc bigraphs}
\author{Mart\'in D.\ Safe\footnote{Departamento de Matem\'{a}tica, Universidad Nacional del Sur (UNS), Bah\'ia Blanca, Argentina and INMABB, Universidad Nacional del Sur (UNS)-CONICET, Bah\'ia Blanca, Argentina}}
\date{}
\newtheorem{thm}{Theorem}
\newtheorem{lem}[thm]{Lemma}
\newtheorem{cor}[thm]{Corollary}
\newcommand\FDCircR{\mathcal F_\mathrm{Dcirc}}
\newcommand\FDIntR{\mathcal F_\mathrm{Dint}}
\newcommand\FCCO{\mathcal F_\mathrm{CCO}}
\newcommand\ForbPCAB{\mathcal F_\mathrm{PCAB}}
\newcommand\ForbPIB{\mathcal F_\mathrm{PIB}}
\newcommand\MI[1]{M_{\mathit{I}}(#1)}
\newcommand\MIV{M_{\mathit{IV}}}
\newcommand\MV{M_{\mathit{V}}}
\newcommand\miop{\mathbin{\oplus}}
\newcommand\MIast[1]{M_{\mathit{I}}^*(#1)}
\newcommand\MVast{M_{\mathit{V}}^*}
\newcommand\trans{^{\mathit t}}
\newcommand\ForbRow{\mathcal F_{\textup{circR}}}
\newcommand\size{\mathop{\textrm{size}}}
\newcommand\id{\mathop{\textup{id}}}
\newcommand\leqR{\mathbin{\preccurlyeq_{\mathrm r}}}
\newcommand\leqC{\mathbin{\preccurlyeq_{\mathrm c}}}
\newcommand\leqCprime{\mathbin{\preccurlyeq'_{\mathrm c}}}
\newcommand\leqCast{\mathbin{\preccurlyeq^\ast_{\mathrm c}}}
\newcommand\leqCC{\mathbin{\preccurlyeq^+_{\mathrm c}}}
\newcommand\lessR{\mathbin{\prec_{\mathrm r}}}
\newcommand\lessC{\mathbin{\prec_{\mathrm c}}}
\newcommand\leqX{\mathbin{\preccurlyeq}}
\newcommand\lessX{\mathbin{\prec}}
\newcommand\leqXX{\mathbin{\preccurlyeq^+}}
\newcommand\lessXX{\mathbin{\prec^+}}
\newcommand\XX{X^+}
\newcommand\intXX[1]{[#1]^+_{\leqX}}
\newcommand\ZetaUnoAst{\ensuremath{Z_1^*}}
\newcommand\ZetaDosAst{\ensuremath{Z_2^*}}
\newcommand\ZetaTresAst{\ensuremath{Z_3^*}}
\newcommand\ZetaCuatroAst{\ensuremath{Z_4^*}}
\newcommand\ZetaCinco{\ensuremath{Z_5}}
\newcommand\ZetaCincoTrans{\ensuremath{Z_5\trans}}
\newcommand\ZetaSeis{\ensuremath{Z_6}}
\newcommand\ZetaSiete{\ensuremath{Z_7}}
\newcommand\ZetaOcho{\ensuremath{Z_8}}
\newcommand\CoZetaUnoAst{\ensuremath{\overline{Z_1^*}}}
\newcommand\CoZetaDosAst{\ensuremath{\overline{Z_2^*}}}
\newcommand\CoZetaCuatroAst{\ensuremath{\overline{Z_4^*}}}
\newcommand\CoZetaSeis{\ensuremath{\overline{Z_6}}}
\begin{document}

\maketitle

\abstract{In 1969, Alan Tucker characterized proper circular-arc graphs as those graphs whose augmented adjacency matrices have the circularly compatible ones property. Moreover, he also found a polynomial-time algorithm for deciding whether any given augmented adjacency matrix has the circularly compatible ones property. These results allowed him to devise the first polynomial-time recognition algorithm for proper circular-arc graphs. However, as Tucker himself remarks, he did not solve the problems of finding a structure theorem and an efficient recognition algorithm for the circularly compatible ones property in arbitrary matrices (i.e., not restricted to augmented adjacency matrices only). In this work, we solve these problems. More precisely, we give a minimal forbidden submatrix characterization for the circularly compatible ones property in arbitrary matrices and a linear-time recognition algorithm for the same property. We derive these results from analogous ones for the related $D$-circular property. Interestingly, these results lead to a minimal forbidden induced subgraph characterization and a linear-time recognition algorithm for proper circular-arc bigraphs, solving a problem first posed by Basu, Das, Ghosh, and Sen~[\emph{J.\ Graph Theory}, 73(4):361--376, 2013]. Our findings generalize some known results about $D$-interval hypergraphs and proper interval bigraphs.}

\section{Introduction}\label{sec:intro}

In 1969, Alan Tucker~\cite{TuckerPhD} introduced the linearly compatible ones property in connection with a characterization of proper interval graphs in terms of their augmented adjacency matrices due to Fred Roberts~\cite{MR0281647}. In order to state this characterization, we now give the necessary definitions.

Let $\leqX$ be a linear order on some set $X$. If $a\leqX b$, then the \emph{linear interval of $\leqX$ with left endpoint $a$ and right endpoint $b$}, denoted $[a,b]_{\leqX}$, is the set $\{x\in X\colon\,a\leqX x\leqX b\}$. A \emph{linear interval of $\leqX$} is either the empty set or $[a,b]_{\leqX}$ for some $a,b\in X$ such that $a\leqX b$. A sequence $a_1a_2\ldots a_k$ is \emph{monotone on $X$} if $a_1,a_2,\ldots,a_k\in X$ and $a_1\leqX a_2\leqX\cdots\leqX a_k$. 

All matrices in this work are binary; i.e., have only $0$ and $1$ entries. We will usually identify each row $r$ of a matrix $M$ with the set of columns of $M$ having a $1$ at row $r$. For instance, we say a row $r$ is \emph{empty} if it has no $1$ entries, while a row $r$ is \emph{contained} in a row $s$, denoted $r\subseteq s$, if $s$ has a $1$ at each column where $r$ has a $1$. A row $r$ of $M$ is \emph{trivial} if it is either empty or the set of all columns of $M$. We adopt analogous conventions for the columns.

A \emph{biorder of a matrix $M$} is an ordered pair $(\leqR,\leqC)$ such that $\leqR$ and $\leqC$ are linear orders of the rows and of the columns of $M$,  respectively. A matrix $M$ has the \emph{linearly compatible ones property}~\cite[p.\ 43]{TuckerPhD} if $M$ admits some biorder $(\leqR,\leqC)$ such that: (i) each row of $M$ is a linear interval of $\leqC$; (ii) each column of $M$ is a linear interval of $\leqR$; and (iii) if $r_1,r_2,\ldots,r_p$ are all the nontrivial rows of $M$ in ascending order of $\leqR$ and $r_i$ equals the linear interval $[d_i,e_i]_{\leqC}$ for each $i\in\{1,2,\ldots,p\}$, then the sequences $d_1d_2\ldots d_p$ and $e_1e_2\ldots e_p$ are monotone on $\leqC$.\footnote{The definitions of the linearly compatible ones property and the circularly compatible ones property used in this work are taken directly from Tucker's PhD thesis~\cite{TuckerPhD} and should not be confused with the ones given in \cite{MR0276129,MR0309810,MR0295938} in the setting of symmetric matrices.} If so, $(\leqR,\leqC)$ is called a \emph{linearly compatible ones biorder of $M$}.

Proper interval graphs is a well-known class of intersection graphs. The \emph{intersection graph} of a family of sets is a graph having one vertex for each set of the family and having an edge joining two different vertices if and only if the sets of the family corresponding to these two vertices have nonempty intersection. A \emph{proper interval graph}~\cite{MR0252267} is the intersection graph of a family of intervals on a line no two of which are one a proper subset of the other. Proper interval graphs admit many different characterizations~\cite{MR1100040,MR2364171,MR1180196,MR1203643,RobertsPhD,MR0252267,MR0281647,MR666799,Wegner}. The result below is the aforementioned characterization by Roberts of proper interval graphs in terms of their augmented adjacency matrices. An \emph{augmented adjacency matrix} of a graph is any matrix that arises from an adjacency matrix by adding $1$'s all along the main diagonal.

\begin{thm}[\cite{MR0281647}]\label{thm:pig_lco} A graph is a proper interval graph if and only if its augmented adjacency matrix has the linearly compatible ones property.\end{thm}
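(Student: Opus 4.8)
The plan is to prove the biconditional by constructing, in each direction, the object certified by the other side. The underlying principle is that a proper interval representation of a graph $G$ encodes exactly the data needed for a linearly compatible ones biorder of an augmented adjacency matrix, and conversely.

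For the forward direction, suppose $G$ is a proper interval graph, so it has an intersection model by intervals $\{I_v\}_{v\in V(G)}$, no one properly contained in another. First I would normalize the model: since no interval is a proper subset of another, I may assume all left endpoints are distinct and all right endpoints are distinct, and the order of left endpoints coincides with the order of right endpoints (this is the standard consequence of the proper-containment-free condition). This common order induces a single linear order $\leqX$ on $V(G)$; set $\leqR$ and $\leqC$ to be the orderings of the rows and columns of the augmented adjacency matrix $M$ induced by $\leqX$. I would then verify conditions (i)--(iii). For (i) and (ii), I would show that the set of vertices $w$ whose interval meets $I_v$ (together with $v$ itself, accounting for the augmenting diagonal $1$) is exactly a contiguous block in $\leqX$: intervals meeting $I_v$ are those whose left endpoint precedes the right endpoint of $I_v$ and whose right endpoint follows the left endpoint of $I_v$, and monotonicity of endpoints makes this an interval of $\leqX$ containing $v$. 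Symmetry of the adjacency relation gives (ii) from (i). For (iii), the monotonicity of the endpoint sequences $d_1\ldots d_p$ and $e_1\ldots e_p$ follows directly from the fact that as $v$ increases in $\leqX$, both endpoints of $I_v$ move monotonically, so the left and right ends of the corresponding row-intervals cannot decrease.

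For the converse, suppose the augmented adjacency matrix $M$ has the linearly compatible ones property, witnessed by a biorder $(\leqR,\leqC)$. Because $M$ is an augmented adjacency matrix, its rows and columns are indexed by the same vertex set and the matrix is symmetric with $1$'s on the diagonal; I would argue that $\leqR$ and $\leqC$ may be taken to be the \emph{same} order $\leqX$ on $V(G)$ (using symmetry and the diagonal to reconcile the two orders). Then I would build intervals: for each vertex $v$, using that its row is the linear interval $[d_v,e_v]_{\leqC}$, assign $I_v$ an interval on the line whose left and right endpoints track $d_v$ and $e_v$ under $\leqX$. The monotonicity condition (iii) guarantees that these endpoint assignments are order-preserving, which is exactly what forces no interval to be a proper subset of another. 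Finally I would check that $I_u\cap I_v\neq\emptyset$ if and only if $uv$ is an edge, i.e.\ the $(u,v)$ entry of the adjacency matrix is $1$, translating the containment $v\in[d_u,e_u]_{\leqC}$ back into interval overlap.

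The main obstacle I anticipate is the reconciliation step in the converse: a priori the linearly compatible ones property supplies two possibly unrelated orders $\leqR$ and $\leqC$, whereas an interval representation needs a single coherent order on the vertex set. Exploiting the special structure of an augmented adjacency matrix---its symmetry and its all-ones diagonal, which force the row indexed by $v$ and the column indexed by $v$ to behave compatibly---to collapse $\leqR$ and $\leqC$ into one order, and then to carefully convert the endpoint sequences into genuine points on a line realizing the proper (containment-free) condition, is where the real work lies; the verification of the intersection conditions (i)--(ii) is comparatively routine once that alignment is in place.
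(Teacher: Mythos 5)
The paper does not prove this statement: Theorem~\ref{thm:pig_lco} is quoted from Roberts~\cite{MR0281647}, so there is no in-paper argument to compare yours against, and I can only assess your sketch on its own terms. Your forward direction is the standard argument and is essentially correct.

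The converse, however, rests on a construction that fails, not merely on deferred details. You assign to each vertex $v$ an interval ``whose left and right endpoints track $d_v$ and $e_v$,'' i.e.\ essentially the block of columns spanned by row $v$ (the closed neighborhood of $v$). The intersection graph of these blocks is in general a proper supergraph of $G$: two closed neighborhoods can overlap without the vertices being adjacent. Concretely, for $P_4$ with its natural order the rows are $[1,2]_{\leqC}$, $[1,3]_{\leqC}$, $[2,4]_{\leqC}$, $[3,4]_{\leqC}$, and the first and third meet in column $2$ although vertices $1$ and $3$ are nonadjacent; for $K_4$ minus an edge the two nonadjacent vertices have row-blocks sharing two full columns, so no endpoint perturbation can separate them while preserving the true adjacencies. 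The standard reconstruction instead anchors one endpoint of $I_v$ at the position of $v$ itself, e.g.\ $I_v=[\pi(v),\,\pi(e_v)+\pi(v)/(n+1)]$ where $\pi$ is the position in $\leqC$, and then uses the symmetry of $M$, the all-ones diagonal, and the monotonicity of $e_1e_2\ldots e_p$ to check both that intersections coincide with adjacencies and that the model is proper. Two further points. First, your claim that monotonicity of the endpoint sequences ``forces no interval to be a proper subset of another'' is false as stated: with $d_u=d_v$ and $e_u\lessC e_v$ both sequences are monotone yet one row-interval properly contains the other (already for $P_3$); monotonicity only guarantees that any containment shares an endpoint, after which a perturbation --- exactly the one the paper performs in the bimodel construction following Theorem~\ref{thm:BasuPCAb} --- is required. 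Second, the reconciliation of $\leqR$ with $\leqC$, which you rightly flag as the real work, is left entirely as a plan; nothing in the sketch indicates how symmetry and the unit diagonal actually force the two orders to agree on the nontrivial rows, and without that step condition (iii) cannot be brought to bear.
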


Many notions which turn out to be equivalent to the linearly compatible ones property were subsequently introduced by Moore~\cite{MR0437403} ($D$-interval hypergraphs), Spinrad, Brandst{\"a}dt, and Stewart~\cite{MR917130} (adjacency and enclosure property), Sen and Sanyal~\cite{MR1271987} (monotone consecutive arrangements), and Lai and Wei~\cite{MR1440521} (forward-convex labelings). For instance, a matrix $M$ has the \emph{$D$-interval property}~\cite{MR0437403} if there is a linear order $\leqC$ of its columns such that each row of $M$ is a linear interval of $\leqC$ and the set difference $s-r$ is also a linear interval of $\leqC$ for any two rows $r$ and $s$ of $M$. Moore~\cite{MR0437403} characterized the $D$-interval property by minimal forbidden induced submatrices. Spinrad, Brandst{\"a}dt, and Stewart~\cite{MR917130} gave the first linear-time recognition algorithm for the $D$-interval property. A significantly simpler linear-time recognition algorithm was later proposed by Sprague~\cite{MR1369371}. More recently, Hell and Huang~\cite{MR2134416} proposed a linear-time recognition algorithm which, in addition, outputs a minimal forbidden submatrix of the input matrix $M$ whenever $M$ does not have the property.

Interestingly, the linearly compatible ones property also characterizes proper interval bigraphs. The \emph{bipartite intersection graph}~\cite{MR687568} of two families of sets $\mathcal F_1$ and $\mathcal F_2$ is a graph having a vertex for each element of $\mathcal F_1$ and for each element of $\mathcal F_2$ and such that a vertex corresponding to an element in $\mathcal F_1$ is adjacent to a vertex corresponding to an element in $\mathcal F_2$ if and only if these two elements have nonempty intersection. A \emph{proper interval bigraph}~\cite{MR1271987} is the bipartite intersection graph of two families $\mathcal F_1$ and $\mathcal F_2$ of intervals on a line where neither $\mathcal F_1$ nor $\mathcal F_2$ contains two intervals such that one is a proper subset of the other. If so, $\{\mathcal F_1,\mathcal F_2\}$ is called a \emph{proper interval bimodel} of the bipartite intersection graph of $\mathcal F_1$ and $\mathcal F_2$. Proper interval bigraphs admit several different characterizations~\cite{MR944699,MR2762475,MR3410848,MR3336533,MR2071482,MR2134416,MR1405855,MR1604968,MR1368750,MR1271987,MR1483762}. Interestingly, proper interval bigraphs are known to coincide with many other graph classes, including unit interval bigraphs~\cite{MR1271987}, bipartite permutation graphs~\cite{MR1368750}, bipartite asteroidal-triple-free graphs~\cite{MR0221974}, bipartite co-comparability graphs~\cite{MR0221974}, bipartite tolerance graphs~\cite{MR944699}, and the complement of two-clique circular-arc graphs~\cite{MR2071482}. The \emph{bipartite graph associated with a matrix $M$} has one vertex for each row and for each column of $M$ and the vertex corresponding to row $i$ is adjacent to the vertex corresponding to column $j$ if and only if the entry $(i,j)$ of $M$ is $1$. The result below follows by combining results from the works of Sen and Sanyal~\cite{MR1271987}, Lai and Wei~\cite{MR1440521}, and Moore~\cite{MR0437403} (see Subsection~\ref{ssec:LCO}).

\begin{thm}[\cite{MR1440521,MR0437403,MR1271987}]\label{thm:pib_cco} Proper interval bigraphs are precisely the bipartite graphs associated with matrices having the linearly compatible ones property.\end{thm}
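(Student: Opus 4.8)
The plan is to prove Theorem~\ref{thm:pib_cco} by establishing a correspondence between proper interval bimodels of a bipartite graph and linearly compatible ones biorders of an associated matrix. The key observation is that, given a bipartite graph $G$ with a fixed bipartition into parts $U$ and $W$, we may form the biadjacency matrix $M$ whose rows are indexed by $U$, whose columns are indexed by $W$, and whose $(u,w)$ entry is $1$ precisely when $u$ is adjacent to $w$. Then $G$ is exactly the bipartite graph associated with $M$, so it suffices to show that $G$ admits a proper interval bimodel if and only if $M$ has the linearly compatible ones property. (One should remark that the bipartition of a connected bipartite graph is unique up to swapping the two parts, and swapping corresponds to transposing $M$, which preserves the linearly compatible ones property; disconnected cases can be handled componentwise.)

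First I would prove the forward direction. Suppose $G$ has a proper interval bimodel $\{\mathcal F_1,\mathcal F_2\}$ with $\mathcal F_1$ indexing $U$ and $\mathcal F_2$ indexing $W$. Because no family contains two intervals with one a proper subset of the other, the intervals within each family can be linearly ordered consistently by their left endpoints and by their right endpoints simultaneously; I would use this to define $\leqR$ on the rows (from $\mathcal F_1$) and $\leqC$ on the columns (from $\mathcal F_2$). Since $u\in U$ is adjacent to exactly those $w\in W$ whose interval meets the interval of $u$, and the $W$-intervals are properly ordered, the set of neighbors of $u$ forms a linear interval of $\leqC$; symmetrically each column is a linear interval of $\leqR$. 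The monotonicity condition~(iii) follows from the fact that as one moves through the $U$-intervals in increasing order, both the left and right endpoints of the neighborhoods advance monotonically, which is a direct consequence of the proper (inclusion-free) structure of both families. This is where I would lean on the equivalent formulations recorded by Sen and Sanyal~\cite{MR1271987} and by Moore~\cite{MR0437403}, since the monotone-consecutive-arrangement viewpoint packages exactly conditions~(i)--(iii).

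For the converse, I would start from a linearly compatible ones biorder $(\leqR,\leqC)$ of $M$ and construct a proper interval bimodel. The idea is to place the columns of $M$ as consecutive integer points $1,2,\ldots,|W|$ along a line in the order given by $\leqC$, and to assign to each row $r=[d_r,e_r]_{\leqC}$ an interval covering exactly the points corresponding to the columns in $r$ (with trivial and empty rows handled by degenerate or all-covering intervals). Conditions~(i) and~(ii) guarantee that neighborhoods are genuine intervals, while the monotonicity condition~(iii) is precisely what rules out two row-intervals being nested one properly inside the other, and a symmetric argument (or the fact that column neighborhoods are also intervals) handles the inclusion-freeness of the column family. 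One then verifies that the resulting bipartite intersection graph is $G$, so $G$ is a proper interval bigraph.

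The main obstacle I anticipate is bookkeeping the endpoint perturbations needed to guarantee strict inclusion-freeness within each family while simultaneously preserving the exact adjacency pattern: one must nudge endpoints so that two rows with the same left (or right) endpoint in $\leqC$ do not collapse into a containment, yet still intersect exactly the intended columns. The cleanest route, and the one I would take to avoid this, is not to build the bimodel by hand but to cite the established equivalences: Sen and Sanyal's characterization of proper interval bigraphs via monotone consecutive arrangements~\cite{MR1271987} together with Lai and Wei's forward-convex labelings~\cite{MR1440521} already show that each of these notions is equivalent to the linearly compatible ones property, and Moore's $D$-interval property~\cite{MR0437403} supplies the bridge. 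Thus the theorem is obtained by transitively chaining these known equivalences rather than by a direct geometric construction, relegating the delicate endpoint manipulations to the cited literature; this is the strategy indicated by the reference to Subsection~\ref{ssec:LCO} in the statement.
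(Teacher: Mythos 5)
Your final strategy---chaining the known equivalences among the monotone consecutive property (Sen--Sanyal), forward-convex labelings (Lai--Wei), the $D$-interval property (Moore), and the linearly compatible ones property---is exactly what the paper does in Subsection~\ref{ssec:LCO} via Theorem~\ref{thm:MCA} and Corollary~\ref{cor:LCO}, so your proposal matches the paper's approach. The only nuance worth noting is that the paper's Corollary~\ref{cor:LCO} needs the \emph{doubly} forward-convex property to supply condition~(ii) of the linearly compatible ones definition (columns being linear intervals of $\leqR$) and relies on its own forbidden-submatrix result (Theorem~\ref{thm:Dint}) for the direction ``linearly compatible ones implies $D$-interval,'' two details your write-up leaves implicit in the citation chain.
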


The above theorem allows for translation back and forth between results about the linearly compatible ones property and results about proper interval bigraphs. In fact, the aforementioned linear-time recognition algorithms in~\cite{MR2134416,MR917130,MR1369371} for the linearly compatible ones property were originally formulated as recognition algorithms for proper interval bigraphs (or, equivalently, bipartite permutation graphs).

Tucker~\cite{TuckerPhD} introduced the circularly compatible ones property in order to characterize proper circular-arc graphs. If $\leqX$ is a linear order on some set $X$ and $a,b\in X$, then the \emph{circular interval of $\leqX$ with left endpoint $a$ and right endpoint $b$}, denoted $[a,b]_{\leqX}$, is either $\{x\in X\colon\;a\leqX x\leqX b\}$ if $a\leqX b$, or $\{x\in X\colon\,x\leqX b\text{ or }a\leqX x\}$ if $b\lessX a$. A \emph{circular interval of $\leqX$} is either the empty set or $[a,b]_{\leqX}$ for some $a,b\in X$. A sequence $a_1a_2\ldots a_k$ is \emph{circularly monotone on $\leqX$} if $a_1,a_2,\ldots,a_k\in X$ and $a_i\leqX a_{i+1}$ holds for all but at most one $i\in\{1,2,\ldots,k\}$ (where $a_{k+1}$ stands for $a_1$). A matrix $M$ has the \emph{circularly compatible ones property}~\cite[p.\ 30]{TuckerPhD} if $M$ admits some biorder $(\leqR,\leqC)$ such that: (i) each row of $M$ is a circular interval of $\leqC$; (ii) each column of $M$ is a circular interval of $\leqR$; and (iii) if $r_1,r_2,\ldots,r_p$ are all the nontrivial rows of $M$ in ascending order of $\leqR$ and $r_i=[d_i,e_i]_{\leqC}$ for each $i\in\{1,2,\ldots,p\}$, then the sequences $d_1d_2\ldots d_p$ and $e_1e_2\ldots e_p$ are circularly monotone on $\leqC$. If so, $(\leqR,\leqC)$ is called a \emph{circularly compatible ones biorder}. A \emph{proper circular-arc graph}~\cite{TuckerPhD,MR0309810} is the intersection graph of a family of arcs on a circle such that no two arcs are one a proper subset of the other. Proper circular-arc graphs admit several different characterizations~\cite{MR1081957,MR666799,TuckerPhD,MR0309810,MR0379298}. The result below shows that Theorem~\ref{thm:pig_lco} extends to proper circular-arc graphs and, in fact, this was the motivation behind the introduction of the circularly compatible ones property by Tucker.

\begin{thm}[{\cite[p.\ 36]{TuckerPhD}}]\label{thm:pca_cco} A graph is a proper circular-arc graph if and only if its augmented adjacency matrix has the circularly compatible ones property.\end{thm}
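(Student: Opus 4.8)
The plan is to prove Theorem~\ref{thm:pca_cco} by establishing a direct correspondence between proper circular-arc representations of a graph $G$ and circularly compatible ones biorders of its augmented adjacency matrix $A$, mirroring the linear case of Theorem~\ref{thm:pig_lco} but accounting for the wrap-around behaviour of arcs on a circle. The key observation is that a proper arc model induces a natural circular ordering of the vertices---namely, the circular order in which the arcs' endpoints (say, their clockwise-initial points) appear around the circle---and this circular order should serve simultaneously as $\leqR$ and as $\leqC$, since the augmented adjacency matrix is (essentially) symmetric and indexed by the same vertex set on rows and columns.

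First I would prove the forward direction. Assume $G$ is a proper circular-arc graph with arc family $\{A_v\}_{v\in V}$, no arc properly contained in another. I would fix a clockwise orientation and, after a perturbation to assume all $2n$ endpoints are distinct, let $\leqX$ be the linear order on $V$ obtained by listing the vertices according to the position of the clockwise-starting endpoint of each arc, breaking the circle at an arbitrary point. Setting both $\leqR$ and $\leqC$ equal to $\leqX$, I would verify conditions (i)--(iii) of the circularly compatible ones property. Condition~(i), that each row $A_v$ is a circular interval of $\leqC$, should follow because the proper (inclusion-free) condition forces the set of arcs intersecting $A_v$---together with $v$ itself via the augmented diagonal---to form a contiguous circular block; the inclusion-freeness is exactly what prevents an arc from ``skipping over'' another and thereby breaking contiguity. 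Condition~(ii) is identical by symmetry of the matrix. For condition~(iii) I would show that as $v$ ranges over the vertices in $\leqX$-order, both the left endpoint $d_v$ and the right endpoint $e_v$ of the circular interval $A_v$ advance monotonically around the circle, with at most one ``wrap'' each---this is precisely the circularly-monotone requirement, and it reflects the fact that starting points and ending points of the arcs each occur in the same cyclic order.

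Conversely, given a circularly compatible ones biorder $(\leqR,\leqC)$ of the augmented adjacency matrix, I would construct a proper arc model. I would place the $n$ vertices as $n$ equally spaced points on a circle in the cyclic order determined by $\leqC$, and to each vertex $v$ assign the arc spanning from the point of $d_v$ to the point of $e_v$, where $[d_v,e_v]_{\leqC}$ is the circular interval equal to row $v$. Two arcs $A_u,A_v$ intersect if and only if $u\in A_v$ or $v\in A_u$ as rows, which---because of the augmented diagonal and the symmetry forcing the biorder to be compatible on both axes---coincides with adjacency in $G$. The circular-monotonicity in condition~(iii) is what guarantees this arc family is proper: if one arc were a proper subset of another, the endpoint sequences $d_1\ldots d_p$ and $e_1\ldots e_p$ would fail to advance in a coherent circularly monotone fashion, producing a ``nesting'' inversion that condition~(iii) forbids.

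I expect the main obstacle to be the careful bookkeeping of the wrap-around in condition~(iii), specifically controlling the single permitted exception in the definition of circularly monotone and ensuring it is consistent between the $d$-sequence and the $e$-sequence. In the linear case the endpoints are genuinely monotone, but on the circle one must argue that the at-most-one ``break'' in the left-endpoint sequence is compatible with the at-most-one break in the right-endpoint sequence and that together they correspond to a single cut point of the circle; handling trivial rows (empty arcs and the universal vertex whose arc is the whole circle) and the endpoint-coincidence perturbations cleanly will require some delicate case analysis. The symmetry of the augmented adjacency matrix, which lets one use a single order for both rows and columns, should substantially reduce the combinatorial burden relative to the general (non-symmetric) circularly compatible ones setting treated later in the paper.
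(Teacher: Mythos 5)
First, note that the paper itself gives no proof of Theorem~\ref{thm:pca_cco}: it is imported verbatim from Tucker's thesis, so your attempt can only be measured against the classical argument, not against anything in this text.

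Your converse direction contains a concrete error. You assign to each vertex $v$ the arc spanning from the position of $d_v$ to the position of $e_v$, i.e., the arc covering all of the closed neighborhood $N[v]=[d_v,e_v]_{\leqC}$. Two such arcs intersect as soon as $N[u]\cap N[v]\neq\emptyset$, which is strictly weaker than adjacency. Already for the path $u-v-w$ (whose augmented adjacency matrix trivially has the circularly compatible ones property with the order $u,v,w$) your arcs are $[u,v]$, $[u,w]$, $[v,w]$: the first and third meet at $v$, so the constructed graph is $K_3$, not $P_3$; moreover the arc for $u$ is properly contained in the arc for $v$, so the model is not even proper. The classical construction instead anchors each arc at the position of $v$ itself and extends it only to the position of $e_v$ (or symmetrically, from $d_v$ to $v$), with a tie-breaking perturbation; then for $u$ preceding $v$ the arcs meet exactly when $v\in N[u]$, and the circular monotonicity of the $e$-sequence from condition~(iii) is what rules out containments. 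The ``symmetry plus augmented diagonal'' remark you offer does not repair the construction as written. In the forward direction your argument for condition~(i) is essentially sound (the neighborhood is the union of two circular intervals sharing the element $v$, hence a circular interval), but condition~(iii) --- that the left and right endpoints of the neighborhoods advance circularly monotonically with a single common break --- is the actual content of Tucker's theorem and is only asserted, as you yourself concede. As it stands the proposal is not a proof in either direction, and the converse direction would need to be rebuilt around the correct arc assignment.
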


Based on the above characterization, Tucker was able to devise the first polynomial-time recognition algorithm for proper circular-arc graphs by reducing the problem to that of deciding whether any given augmented adjacency matrix has the circularly compatible ones property~{\cite[Section~2.2]{TuckerPhD}}. However, as Tucker himself remarks~\cite[p.\ 46]{TuckerPhD}, he did not solve the problems of finding a structure theorem and an efficient recognition algorithm for the circularly compatible ones property in arbitrary matrices (i.e., not restricted to augmented adjacency matrices only). We solve these problems. In order to do so, we study the following circular variant of the $D$-interval property. A matrix has the \emph{$D$-circular property} if there is a linear order $\leqC$ of its columns such that each row of $M$ is a circular interval of $\leqC$ and the set difference $s-r$ is also a circular interval of $\leqC$ for any two rows $r$ and $s$ of $M$. If so, $\leqC$ is a \emph{$D$-circular order} of $M$. Hypergraphs whose incidence matrices have the $D$-circular property were studied by K{\"o}bler, Kuhnert, and Verbitsky~\cite{MR3573905} (where they are called tight circular-arc hypergraphs).

Proper circular-arc bigraphs are defined analogously to proper interval bigraphs as follows. A \emph{proper circular-arc bigraph}~\cite{MR2071482} is the bipartite intersection graph of two families $\mathcal F_1$ and $\mathcal F_2$ of arcs on a circle where neither $\mathcal F_1$ nor $\mathcal F_2$ contains two arcs such that one is a proper subset of the other. If so, $\{\mathcal F_1,\mathcal F_2\}$ is called a \emph{proper circular-arc bimodel} of the bipartite intersection graph of $\mathcal F_1$ and $\mathcal F_2$. In~\cite{MR3336533}, proper circular-arc bigraphs were characterized in terms of a pair of linear orders of their vertices. Basu et al.~\cite{MR3065109} proved an analogue of Theorem~\ref{thm:pib_cco} for proper circular-arc bigraphs having a biadjacency matrix with no trivial rows, where the linearly compatible ones property is replaced with the $D$-circular property. Combining their result with our findings about the circularly compatible ones property, we derive the following analogue of Theorem~\ref{thm:pib_cco} for arbitrary proper circular-arc bigraphs by replacing the linearly compatible ones property with the circularly compatible ones property.

\begin{thm}\label{thm:pcab_cco} Proper circular-arc bigraphs are the bipartite graphs associated with matrices having the circularly compatible ones property.\end{thm}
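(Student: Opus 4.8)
The plan is to prove Theorem~\ref{thm:pcab_cco} as a two-way translation between proper circular-arc bimodels and circularly compatible ones biorders, handling the trivial-row obstruction explicitly so as to extend the result of Basu et al.\ from matrices with no trivial rows to arbitrary matrices. Recall that the bipartite graph associated with a matrix $M$ has one vertex for each row and each column, with a row vertex adjacent to a column vertex exactly when the corresponding entry is $1$. So the statement asserts an equivalence between two classes of bipartite graphs: those arising as bipartite intersection graphs of two families of arcs (neither containing a properly nested pair) and those associated with some matrix having the circularly compatible ones property.

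First I would establish the forward direction: given a proper circular-arc bigraph $G$ with bipartite parts corresponding to families $\mathcal F_1$ and $\mathcal F_2$ of arcs on a circle, I would build a matrix $M$ whose rows are indexed by $\mathcal F_1$ and whose columns are indexed by $\mathcal F_2$, with a $1$ in position $(r,c)$ iff the arcs intersect, so that the bipartite graph associated with $M$ is exactly $G$. The task is then to read off a circularly compatible ones biorder from the circular geometry. Order the columns $\leqC$ by (say) the clockwise order of the left endpoints of the arcs in $\mathcal F_2$, and the rows $\leqR$ by the left endpoints of the arcs in $\mathcal F_1$. The properness condition (no arc properly contains another within the same family) is precisely what forces each row and each column to be a \emph{circular} interval of the opposite order and forces the left-endpoint and right-endpoint sequences to be circularly monotone, i.e.\ conditions (i)--(iii) of the circularly compatible ones property. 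This is the same endpoint-ordering argument that underlies Theorem~\ref{thm:pca_cco}, adapted to the bipartite (asymmetric row/column) setting.

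For the converse, suppose $M$ has the circularly compatible ones property via a biorder $(\leqR,\leqC)$; I want to realize the associated bipartite graph as a proper circular-arc bigraph. Placing the columns as equally spaced points on a circle in the order $\leqC$, each row, being a circular interval of $\leqC$, becomes an arc of $\mathcal F_1$; symmetrically each column becomes an arc of $\mathcal F_2$, and the circular-monotonicity condition (iii) guarantees that these arcs can be chosen with distinct endpoints so that no two arcs in the same family are properly nested, yielding a proper circular-arc bimodel whose bipartite intersection graph is the associated bipartite graph of $M$. The most delicate point is the treatment of trivial rows and columns: Basu et al.'s result is stated only for biadjacency matrices with no trivial rows, so the main obstacle is to show that deleting (or, in the realization, inserting) an empty row or a full row does not change membership in either class. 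An empty row corresponds to an isolated vertex and to an arc disjoint from all arcs of the other family; a full row corresponds to a vertex adjacent to everything and to an arc meeting every arc of the other family. I expect to verify that both can be accommodated in a proper circular-arc bimodel (an arbitrarily tiny arc for the empty case, and a nearly-complete arc avoiding exactly one gap for the full case) without creating a proper containment, and dually that such rows are compatible with condition (iii) since trivial rows are excluded from the sequences $d_1\ldots d_p$ and $e_1\ldots e_p$ by definition. Once this bookkeeping is in place, the equivalence follows by combining the two directions with the cited result of Basu et al.\ on the trivial-row-free case.
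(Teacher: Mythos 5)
The converse direction of your argument contains a genuine gap. You place the columns on a circle in the order $\leqC$ of a circularly compatible ones biorder, turn each row into an arc over its circular interval, and claim that condition (iii) lets you perturb endpoints so that no two row arcs are properly nested. This is false: condition (iii) only constrains the global circular monotonicity of the endpoint sequences and does not forbid strictly nested row intervals. For a concrete counterexample, take $M=\left(\begin{smallmatrix}0&1&1&0&0\\1&1&1&1&0\end{smallmatrix}\right)$ with its canonical biorder: both rows are circular intervals ($[2,3]_{\leqC}$ and $[1,4]_{\leqC}$), every column is a circular interval of $\leqR$, and every sequence of length two is circularly monotone, so this is a circularly compatible ones biorder; yet the arc for row $1$ is forced to lie strictly inside the arc for row $2$ (they share no endpoint), and no perturbation can make $\mathcal F_1$ proper. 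The property that actually prevents this is $D$-circularity — if $r\subsetneq s$ and $s-r$ must be a circular interval, then nested rows share an endpoint — and the witnessing column order may have to differ from the one in the given biorder (here $1,4,2,3,5$ works but the canonical order does not). Establishing that the circularly compatible ones property implies the (doubly) $D$-circular property is precisely the hard content of the paper: it is Theorem~\ref{thm:CCO}, whose proof runs through the full forbidden-configuration machinery (Theorem~\ref{thm:main}, Lemma~\ref{lem:CCO=>Dcirc}, Theorem~\ref{thm:circR}) rather than a direct geometric argument from the biorder.

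The paper's actual route to Theorem~\ref{thm:pcab_cco} (realized as Theorem~\ref{thm:PCAb}) is: cite Basu et al.\ (Theorem~\ref{thm:BasuPCAb}) for the equivalence between the $D$-circular property of a trivial-row-free biadjacency matrix and being a proper circular-arc bigraph, prove separately that the circularly compatible ones property coincides with the doubly $D$-circular property, and handle trivial rows via the matrix $M^{[u]}$ and Lemma~\ref{lem:M[u]} (itself a nontrivial forbidden-configuration argument, not just the observation that trivial rows are excluded from the endpoint sequences). Your forward direction — ordering rows and columns by left endpoints of the arcs and checking (i)--(iii) directly — is morally sound but amounts to re-proving a bipartite analogue of Tucker's Theorem~\ref{thm:pca_cco} from scratch; all of the real work (consecutiveness of the arcs meeting a fixed arc in a proper family, monotonicity of the induced endpoints) is asserted rather than carried out. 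The trivial-row bookkeeping on the bimodel side (tiny arc for an empty row, near-complete arc for a full row) is fine and matches the spirit of the paper's $M^{[u]}$ construction, but it does not rescue the converse.
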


Basu et al.~\cite{MR3065109} asked for an efficient recognition algorithm for proper circular-arc bigraphs; more recently, Das and Chakraborty~\cite{MR3662958} raised the same problem. We solve this problem by giving a linear-time algorithm for recognizing proper circular-arc bigraphs. Moreover, as a consequence of the above theorem and our minimal forbidden submatrix characterization of the circularly compatible ones property, we derive a minimal forbidden induced subgraph characterization for proper circular-arc bigraphs.

The main results of this work are a minimal forbidden submatrix characterization and a linear-time recognition algorithm for the $D$-circular property for arbitrary matrices. Moreover, we show that an arbitrary matrix has the circularly compatible ones property if and only if both the matrix and its transpose have the $D$-circular property. As a consequence, we derive a minimal forbidden submatrix characterization for the circularly compatible ones property together with a linear-time recognition algorithm (thus solving the aforementioned problems by Tucker~\cite{TuckerPhD}). Given the connection between proper circular-arc bigraphs and the circularly compatible ones property (Theorem~\ref{thm:pcab_cco}), these results lead to a minimal forbidden induced subgraph characterization and a linear-time recognition algorithm for proper circular-arc bigraphs (thus solving the problem first posed by Basu et al.~\cite{MR3065109}). Our recognition algorithms for matrix properties output either the linear order(s) required by the definition of the property or a minimal forbidden submatrix. Similarly, our recognition algorithms for graph classes either produce a bimodel as required by the definition of the class or a minimal forbidden induced subgraph.

This work is organized as follows. In Section~\ref{sec:defs}, we give basic definitions and notation and state some previous results about the consecutive-ones and the circular-ones properties. In Section~\ref{sec:Dcirc}, we give a minimal forbidden submatrix characterization and a linear-time recognition algorithm for the $D$-circular property and discuss their connection with some known results about the $D$-interval property. In Section~\ref{sec:LCO+CCO}, we argue that the linearly compatible ones property is equivalent to the $D$-circular property and give a minimal forbidden submatrix characterization and a linear-time recognition algorithm for the circularly compatible ones property. In Section~\ref{sec:PCAB+PIB}, we derive a minimal forbidden induced subgraph characterization and a linear-time recognition algorithm for proper circular-arc bigraphs. Some of the proofs of the more technical results are given in Appendix~\ref{sec:app}.

\section{Definitions and preliminaries}\label{sec:defs}

For each positive integer $k$, we denote by $[k]$ the set $\{1,2,\ldots,k\}$; if $k=0$, $[k]$ denotes the empty set. We also denote by $[k]$ the set $[k]$ endowed with the natural order. In the same vein, if $i,j\in[k]$, we write $[i,j]_{[k]}$ to denote the circular interval with left endpoint $i$ and right endpoint $j$ with respect to the natural order of $[k]$. By $\id_k$ we denote the identity function with domain $[k]$. Let $\leqX$ be a linear order on some finite set $X$. If $x,y\in X$, we write $x\lessX y$ to mean $x\leqX y$ and $x\neq y$; this convention also applies to linear orders denoted by $\leqX$ with some subscript and/or superscript (e.g., $\leqR$, $\leqC$, $\leqXX$, etc.). A set $S$ is \emph{properly contained in} or \emph{is a proper subset of} a set $T$ if $S$ is a subset of $T$ and $S\neq T$. Two sets $S$ and $T$ are \emph{incomparable} if none of them is a subset of the other.

\subsection*{Sequences}

Let $a=a_1a_2\ldots a_k$ be a sequence of length $k$. We call the \emph{shift of $a$} to the sequence $a_2a_3\ldots a_ka_1$ and the \emph{reversal of $a$} to the sequence $a_ka_{k-1}\ldots a_1$. We denote the length of any sequence $a$ by $\vert a\vert$. If $b$ is a sequence and $i\in[k]$, we say that \emph{$b$ occurs circularly in $a$ at position $i$} if $\vert b\vert\leq k$ and $a_ia_{i+1}\ldots a_{i+\vert b\vert-1}=b$ where subindices are modulo $k$. If $i\leq\vert a\vert-\vert b\vert+1$, we may simply say that $b$ \emph{occurs in $a$ at position $i$}.

If $a=a_1a_2\ldots a_k$ is \emph{binary} (i.e., each $a_i$ is either $0$ or $1$), we define the \emph{complement of $a$}, denoted by $\overline a$, as the sequence that arises from $a$ by interchanging $0$'s with $1$'s. A \emph{binary bracelet}~\cite{MR1857399} is a lexicographically smallest element in an equivalence class of binary sequences under shifts and reversals.

A sequence $\lambda$ is \emph{senary} if each of its element is $0$, $1$, $2$, $3$, $4$, or $5$. We define the \emph{complement} of a senary sequence $\lambda$, denoted by $\overline\lambda$, as the sequence that arises from $\lambda$ by interchanging $0$'s with $1$'s, $2$'s with $3$'s, and $4$'s with $5$'s.

\subsection*{Matrices}

Let $M$ and $M'$ be matrices. We say that \emph{$M$ contains $M'$ as a configuration} if some submatrix of $M$ equals $M'$ up to permutations of rows and of columns. We say that $M$ and $M'$ \emph{represent the same configuration} if $M$ and $M'$ are equal up to permutations of rows and of columns; otherwise, we say that $M$ and $M'$ \emph{represent different configurations}. We denote by $M^*$ the matrix that arises from $M$ by adding one last column consisting entirely of $0$'s. We denote the \emph{transpose} of $M$ by $M\trans$.

Let $M$ be a $k\times\ell$ matrix. We assume that the rows and columns of $M$ are labeled from $1$ to $k$ and from $1$ to $\ell$, respectively, as usual. By \emph{complementing row $i$ of $M$} we mean replacing, in row $i$, all $0$ entries by $1$'s and all $1$ entries by $0$'s. The \emph{complement of $M$}, denoted $\overline M$, is the matrix arising from $M$ by replacing all $0$ entries by $1$'s and all $1$ entries by $0$'s. If $a$ is a binary sequence of length $k$, we denote by $a\miop M$ the matrix that arises from $M$ by complementing those rows $i\in\{1,\ldots k\}$ such that $a_i=1$. A \emph{row map of $M$} is an injective function $\rho:[k']\to[k]$ for some positive integer $k'$. A \emph{column map of $M$} is an injective function $\sigma:[\ell']\to[\ell]$ for some positive integer $\ell'$. If $\rho:[k']\to[k]$ is a row map of $M$ and $\sigma:[\ell']\to[\ell]$ is a column map of $M$, we denote by $M_{\rho,\sigma}$ the $k'\times\ell'$ matrix such that, for each $(i,j)\in [k']\times[\ell']$, its $(i,j)$-entry is the $(\rho(i),\sigma(j))$-entry of $M$. We also write $M_\rho$ to mean $M_{\rho,\id_\ell}$. Notice that $M$ contains $M'$ as a configuration if and only if there is a row map $\rho$ and column map $\sigma$ of $M$ such that $M_{\rho,\sigma}=M'$. If, in addition, $a$ is a binary sequence whose length equals the number of rows of $M$, then $(a\miop M)_{\rho,\sigma}=a_\rho\miop M_{\rho,\sigma}$. It is also clear that, if $\rho$ and $\sigma$ are a row map and a column map of $M$ and $\rho'$ and $\sigma'$ are a row map and a column map of $M_{\rho,\sigma}$, then $\rho'\circ\rho$ and $\sigma'\circ\sigma$ are a row map and a column map of $M$, and $M_{\rho'\circ\rho,\sigma'\circ\sigma}=((M_{\rho,\sigma})_{\rho',\sigma'}$. If $s$ is a positive integer and $n_1,n_2,\ldots,n_s$ are pairwise different positive integers, we denote by $\langle n_1,n_2,\ldots,n_s\rangle$ the injective function with domain $[s]$ that transforms $i$ into $n_i$ for each $i\in[s]$. Hence, if $i\in[k]$, then $M_{\langle i\rangle}$ denotes the $1\times\ell$ matrix whose only row equals row $i$ of $M$.

The \emph{canonical order of the rows of $M$} is the linear order of the rows of $M$ as they occur from top to bottom. Similarly, the \emph{canonical order of the columns of $M$} is the linear order of the columns of $M$ as they occur from left to right. The \emph{canonical biorder of $M$} is the biorder $(\leqR,\leqC)$ where $\leqR$ and $\leqC$ are the canonical orders of the rows and of the columns of $M$, respectively.

\subsection*{Graphs}

All graphs in this work are simple; i.e., finite, undirected, and with no loops and no multiple edges. If $G$ is a graph and $X$ is some subset of its vertex set, the subgraph of $G$ \emph{induced by} $X$ is the graph having $X$ as vertex set and whose edges are the edges of $G$ having both endpoints in $X$. An \emph{induced subgraph} of some graph $G$ is the subgraph of $G$ induced by some subset of its vertex set. An \emph{isolated vertex} of a graph is a vertex of the graph adjacent to no vertex in the graph.

A \emph{stable set} of a graph is a set of pairwise nonadjacent vertices. A \emph{bipartition of a graph $G$} is a partition $\{X,Y\}$ of its vertex set into two (possibly empty) stable sets. A graph is \emph{bipartite} if it admits a bipartition. Let $G$ be a bipartite graph and let $\{X,Y\}$ be a bipartition of $G$. A \emph{biadjacency matrix of $G$ with respect to $X$ and $Y$} has one row for each vertex in $X$ and one column for each vertex in $Y$ and, for each $x\in X$ and each $y\in Y$, the entry in the intersection of the row corresponding to $x$ and the column corresponding to $y$ is $1$ if and only if $xy$ is an edge of $G$. The \emph{bipartite complement of $G$ with respect to $\{X,Y\}$} is the bipartite graph $G'$ with bipartition $\{X,Y\}$ such that, for each $x\in X$ and each $y\in Y$, $x$ is adjacent to $y$ in $G'$ if and only if $x$ is nonadjacent to $y$ in $G$. A \emph{bipartite complement of $G$} is the bipartite complement of $G$ with respect to some bipartition of $G$.

\subsection*{Algorithms}

If $M$ is a matrix, we denote by $\size(M)$ the sum of the number of rows, the number of columns, and the number of ones of $M$. We say that an algorithm taking a matrix $M$ as input is \emph{linear-time} if it runs in $O(\size(M))$ time. In time and space bounds of algorithms taking a graph as input, we denote by $n$ and $m$ the number of vertices and edges of the input graph. We say that an algorithm taking a graph as input is \emph{linear-time} if it runs in $O(n+m)$ time. We assume that input matrices are represented by lists of rows, where each row is represented by a list of the columns having a $1$ in the row. We assume input graphs are represented by adjacency lists. This way, matrices and graphs are represented in $O(\size(M))$ and $O(n+m)$ space, respectively.

\subsection*{Consecutive-ones property and circular-ones property}

A matrix $M$ has the \emph{consecutive-ones property for rows}~\cite{MR0190028} (resp.~\emph{circular-ones property for rows}~\cite{MR0309810}) if there is a linear order $\leqC$ of the columns of $M$ such that each row of $M$ is a linear interval (resp.\ a circular interval) of $\leqC$. If so, $\leqC$ is called a \emph{consecutive-ones order} (resp.\ a \emph{circular-ones order}) of $M$. The \emph{consecutive-ones property for columns} (resp.~\emph{circular-ones property for columns}) is defined analogously by reversing the roles of rows and columns. If no mention is made to rows or columns, we mean the corresponding property for the rows. If the canonical order of the columns of some matrix $M$ is a circular-ones order of $M$, we say that $M$ is a \emph{circular-ones matrix}.

Booth and Lueker~\cite{MR0433962} gave linear-time recognition algorithms for both the consecutive-ones property and the circular-ones property. (In the theorem below, $M_{\langle 1,2,\ldots,i\rangle}$ denotes the matrix consisting of the first $i$ rows of $M$, and is an instance of the notation $M_\rho$ introduced earlier in this section.)

\begin{thm}[\cite{MR0433962}]\label{thm:booth-and-lueker} There is a linear-time algorithm that, given any matrix $M$, outputs either a consecutive-ones order (resp.\ a circular-ones order) of $M$ or the least positive integer $i$ such that $M_{\langle 1,2,\ldots,i\rangle}$ does not have the consecutive-ones (resp.\ circular-ones) property.\end{thm}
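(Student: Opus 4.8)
The plan is to prove both statements with the PQ-tree machinery of Booth and Lueker, processing the rows of $M$ from top to bottom. Recall that a \emph{PQ-tree} over the columns of $M$ is a rooted, ordered tree whose leaves are the columns and whose internal nodes are either \emph{P-nodes}, whose children may be permuted arbitrarily, or \emph{Q-nodes}, whose children may only be reversed as a block; reading the leaves left to right gives a linear order of the columns called a \emph{frontier}, and as the tree ranges over all rearrangements permitted by its P- and Q-nodes, its frontiers range over a set of column orders that I will denote $\mathrm{Cons}(T)$. The key invariant I would maintain is that, after rows $1,\ldots,i$ have been processed, the current tree $T_i$ satisfies $\mathrm{Cons}(T_i)=\{\leqC:\text{each of rows }1,\ldots,i\text{ is a linear interval of }\leqC\}$; that is, $\mathrm{Cons}(T_i)$ is exactly the set of consecutive-ones orders of $M_{\langle 1,2,\ldots,i\rangle}$.

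First I would initialize $T_0$ as a single P-node whose children are all the columns, so that $\mathrm{Cons}(T_0)$ is the set of all column orders, which matches the vacuous constraint for $i=0$. For the inductive step, given $T_{i-1}$ and the set $S$ of columns with a $1$ in row $i$, I would run the Booth--Lueker \emph{reduction}: a bottom-up pass that first identifies the \emph{pertinent} subtree spanning the leaves in $S$ and then rewrites its nodes, proceeding from the leaves toward the root, according to a fixed finite list of templates that classify each visited node by whether its descendant leaves lie entirely in $S$ (\emph{full}), entirely outside $S$ (\emph{empty}), or in both (\emph{partial}). Either some template applies at every node and the pass returns a tree $T_i$ in which the leaves of $S$ are forced to be consecutive in every frontier, or at some node no template applies and the reduction reports failure. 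The correctness of the step is the assertion that the reduction succeeds, returning $T_i$ with $\mathrm{Cons}(T_i)=\{\leqC\in\mathrm{Cons}(T_{i-1}):S\text{ is a linear interval of }\leqC\}$, if and only if some order in $\mathrm{Cons}(T_{i-1})$ makes $S$ consecutive; I would verify this template by template, which is the bulk of the correctness argument but is an entirely finite case check.

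Granting the invariant, the output is read off directly. If the reduction fails at step $i$, then no column order makes each of rows $1,\ldots,i$ a linear interval, whereas such an order did exist for rows $1,\ldots,i-1$ because the algorithm reached step $i$; hence $i$ is the least integer for which $M_{\langle 1,2,\ldots,i\rangle}$ lacks the consecutive-ones property, and I return it. If all $k$ rows are processed, any frontier of $T_k$ is a consecutive-ones order of $M$, which I return. For the circular-ones property I would reduce to the consecutive-ones case: fixing any one column $c$, let $M'$ be obtained from $M$ by complementing every row that has a $1$ in column $c$ and then deleting column $c$. A short arc-cutting argument shows that $M$ has the circular-ones property for rows if and only if $M'$ has the consecutive-ones property for rows, and, since this transformation acts row by row, it carries the prefix $M_{\langle 1,2,\ldots,i\rangle}$ to the prefix $M'_{\langle 1,2,\ldots,i\rangle}$, so the least failing prefix and the corresponding order are recovered by reinserting $c$ and closing the linear order into a circle. (Alternatively, one may run the same incremental scheme directly on the circular analogue of PQ-trees.)

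The main obstacle is the linear-time bound, which is false under a naive per-row accounting because a single reduction can touch arbitrarily many nodes. I would establish it by the standard amortized analysis: after pruning the pertinent subtree so that the work of a reduction is proportional to the number of leaves of $S$ together with the partial and full internal nodes it visits, the cost of step $i$ is charged partly to $|S|$, the number of $1$s in row $i$, and partly to the nodes destroyed during the step, while nodes created in a step are paid for on creation. Summing over all rows, the destroyed-node terms telescope against the created-node terms and the leaf terms sum to the total number of $1$s of $M$, yielding an $O(\size(M))$ bound. Making this accounting rigorous---in particular, handling the ``bubbling up'' that locates the pertinent root and bounding the rewriting work at Q-nodes that have many empty children---is the delicate part of the proof.
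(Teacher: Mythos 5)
This theorem is imported from Booth and Lueker and the paper offers no proof of its own, so there is nothing internal to compare against; your sketch is precisely the standard PQ-tree argument from the cited source, including the row-by-row reduction invariant $\mathrm{Cons}(T_i)=\{\leqC:\text{rows }1,\ldots,i\text{ are linear intervals of }\leqC\}$, the amortized charging scheme for the linear-time bound, and the usual row-complementation reduction of the circular-ones case to the consecutive-ones case. The one point worth making explicit is that your reduction preserves prefixes because the decision to complement a row depends only on that row's own entry in the fixed column, which is exactly what is needed for the ``least failing $i$'' part of the statement to transfer from the consecutive-ones subroutine to the circular-ones claim.
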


\begin{figure}[t!]
\ffigbox[\textwidth]{%
\begin{subfloatrow}
\ffigbox[\FBwidth]{$\begin{pmatrix}
             1 & 1\\
             & 1 & 1\\
             &   & \ddots & \ddots \\
             &   &        &     1 & 1\\
           1 & 0 & \cdots &     0 & 1
         \end{pmatrix}$
}{\caption{$\MI k$ for each $k\geq 3$, where omitted entries are $0$'s}}\qquad
\ffigbox[\FBwidth]{$\begin{pmatrix}
            1 & 1 & 0 & 0 & 0 & 0\\
            0 & 0 & 1 & 1 & 0 & 0\\
            0 & 0 & 0 & 0 & 1 & 1\\
            0 & 1 & 0 & 1 & 0 & 1
          \end{pmatrix}$
}{\caption{$\MIV$}}\qquad
\ffigbox[\FBwidth]{$\begin{pmatrix}
             1 & 1 & 0 & 0 & 0\\
             1 & 1 & 1 & 1 & 0\\
             0 & 0 & 1 & 1 & 0\\
             1 & 0 & 0 & 1 & 1
          \end{pmatrix}$
}{\caption{$\MV$}}
\end{subfloatrow}

}
{\caption{Some Tucker matrices, where $k$ denotes the number of rows}\label{fig:TuckerMatrices}}
\end{figure}

\begin{figure}[t!]
\ffigbox[\textwidth]{%

\begin{subfloatrow}
\ffigbox[\FBwidth]{
 $\begin{pmatrix}
         1 & 1 & 1\\
         1 & 0 & 0\\
         0 & 1 & 0\\
         0 & 0 & 1
      \end{pmatrix}$}
{\caption{$Z_1$}}

\ffigbox[\FBwidth]{
 $\begin{pmatrix}
         1 & 0 & 0\\
         1 & 1 & 0\\
         1 & 1 & 1\\
         0 & 1 & 0
      \end{pmatrix}$}
{\caption{$Z_2$}}\quad

\ffigbox[\FBwidth]{
  $\begin{pmatrix}
         1 & 0 & 0\\
         1 & 1 & 0\\
         1 & 1 & 1\\
         1 & 0 & 1
      \end{pmatrix}$}
{\caption{$Z_3$}}

\ffigbox[\FBwidth]{
  $\begin{pmatrix}
         1 & 1 & 1 & 0\\
         0 & 1 & 1 & 1\\
         0 & 0 & 1 & 0
      \end{pmatrix}$}
{\caption{$Z_4$}}

\ffigbox[\FBwidth]{
$\begin{pmatrix}
1 & 0 & 0 & 1\\
1 & 1 & 0 & 0\\
1 & 1 & 1 & 0\\
0 & 1 & 0 & 0
\end{pmatrix}$}
{\caption{$\ZetaCinco$}}
\end{subfloatrow}

\bigskip

\begin{subfloatrow}
\ffigbox[\FBwidth]{
$\begin{pmatrix}
1 & 1 & 1 & 0\\
1 & 0 & 0 & 1\\
0 & 1 & 0 & 0\\
0 & 0 & 1 & 0
\end{pmatrix}$}
{\caption{$\ZetaSeis$}}

\ffigbox[\FBwidth]{
$\begin{pmatrix}
1 & 1 & 1 & 0\\
1 & 0 & 0 & 1\\
0 & 1 & 0 & 1\\
0 & 0 & 1 & 0\\
\end{pmatrix}$}
{\caption{$\ZetaSiete$}}\quad

\ffigbox[\FBwidth]{
$\begin{pmatrix}
1 & 1 & 1 & 0 & 1\\
0 & 1 & 1 & 1 & 1\\
0 & 0 & 1 & 0 & 0\\
0 & 0 & 0 & 0 & 1
\end{pmatrix}$}
{\caption{$\ZetaOcho$}}

\end{subfloatrow}
}
{\caption{Some small matrices}\label{fig:smallmatrices}}
\end{figure}

Tucker~\cite{MR0295938} characterized the consecutive-ones property by a minimal set of forbidden submatrices, known as \emph{Tucker matrices}. The matrices $\MI k$ for each $k\geq 3$, $\MIV$, and $\MV$, displayed in Figure~\ref{fig:TuckerMatrices}, are some of the Tucker matrices. In~\cite{1611.02216}, we gave an analogous characterization for the circular-ones property. The corresponding set of forbidden submatrices is
\[ \ForbRow=\{a\miop\MIast k\colon\,k\geq 3\mbox{ and }a\in A_k\}\cup\{\MIV,\overline{\MIV},\MVast,\overline{\MVast}\}, \]
where $\MIast k$ and $\MVast$ denote $(\MI k)^*$ and $(\MV)^*$, $A_3=\{000,111\}$ and, for each $k\geq 4$, $A_k$ is the set of all binary bracelets of length $k$. Notice that $001$ and $011$ are binary bracelets of length $3$ but do not belong to $A_3$. A matrix $M$ is a \emph{minimal forbidden submatrix for the circular-ones property} if $M$ is the only submatrix of $M$ not having the circular-ones property.

\begin{thm}[\cite{1611.02216}]\label{thm:circR} A matrix $M$ has the circular-ones property if and only if $M$ contains no matrix in the set $\ForbRow$ as a configuration. Moreover, there is a linear-time algorithm that, given any matrix $M$ not having the circular-ones property, outputs a matrix in $\ForbRow$ contained in $M$ as a configuration. In addition, every matrix in $\ForbRow$ is a minimal forbidden submatrix for the circular-ones property. Hence, for each $M\in\ForbRow$ and each binary sequence $a$ whose length equals the number of rows of $M$, $a\miop M$ represents the same configuration as some matrix in $\ForbRow$. 
\end{thm}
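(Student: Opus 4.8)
The plan is to reduce the circular-ones property to the consecutive-ones property by a row-complementation trick, and then to pull Tucker's forbidden-submatrix characterization of the consecutive-ones property~\cite{MR0295938} back through this reduction. The engine is the following elementary equivalence: fix any column $c$ of $M$ and let $N$ be the matrix obtained from $M$ by complementing every row that has a $1$ in column $c$ (so that column $c$ of $N$ is empty). Then $M$ has the circular-ones property if and only if $N$ has the consecutive-ones property. Indeed, given a circular-ones order of $M$ one rotates it so that $c$ is last and cuts the cycle right after $c$; a row not meeting $c$ does not wrap around the cut and stays a linear interval, while a row meeting $c$ becomes, after complementation, a circular interval avoiding $c$, hence a linear interval. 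Conversely, from a consecutive-ones order of $N$ (in which the empty column $c$ may be moved to an endpoint) one reads the same order circularly and uncomplements, turning every row back into a circular interval. This equivalence holds for each fixed $c$ and is what licenses the use of Theorem~\ref{thm:booth-and-lueker} and of Tucker matrices.

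To obtain the characterization, suppose $M$ lacks the circular-ones property. Then $N$ lacks the consecutive-ones property, so by Tucker's theorem~\cite{MR0295938} $N$ contains some Tucker matrix $T$ as a configuration, on a set of rows $R$ and columns $C$. Since Tucker matrices have no empty column, the empty column $c$ of $N$ is not among $C$. I would then distinguish two cases according to whether $T$, read back in $M$ (i.e.\ uncomplementing the rows of $R$ meeting $c$), already lacks the circular-ones property. If it does, it contributes a forbidden configuration with no extra column; this is the source of the starless matrices $\MIV$ and $\overline{\MIV}$. If instead the uncomplemented $T$ still has the circular-ones property --- as happens for $\MI{k}$ and for $\MV$, both of which are themselves circular --- one appends the column $c$: in $N$ this adds an empty column, yielding $T^*$, and reading back into $M$ turns that column into the indicator $a$ of the rows of $R$ meeting $c$. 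The resulting submatrix of $M$ is exactly $a\miop T^*$, and applying the reduction to $a\miop T^*$ with its last column as the distinguished one returns $T^*$, which lacks the consecutive-ones property; hence $a\miop T^*$ lacks the circular-ones property. This is the mechanism behind the starred matrices $\MIast{k}$ and $\MVast$. The converse direction of the characterization is immediate once one knows that every member of $\ForbRow$ lacks the circular-ones property, since that property is inherited by configurations.

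The heart of the argument --- and the step I expect to be the main obstacle --- is the exact bookkeeping that turns the family $\{a\miop T^*:\ T\text{ a Tucker matrix}\}\cup\{\MIV,\overline{\MIV}\}$ into the irredundant list $\ForbRow$. Three things must be checked. First, the Tucker families $M_{II}(k)$ and $M_{III}(k)$ must be shown to contribute nothing minimal: every configuration they produce already contains one of the matrices coming from $\MI{k}$, $\MIV$, or $\MV$, so it is not a minimal forbidden submatrix. Second, for $\MIast{k}$ the configuration $a\miop\MIast{k}$ depends on $a$ only through the dihedral symmetry group of $\MIast{k}$ (simultaneous cyclic rotation and reflection of its rows and columns); two sequences give the same configuration exactly when they lie in one orbit under rotations and reflections, i.e.\ define the same binary bracelet, which is why $A_k$ is the set of bracelets of length $k$ for $k\ge 4$. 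The low case $k=3$, where $\MI{3}$ enjoys extra symmetry and where $001\miop\MIast{3}$ and $011\miop\MIast{3}$ collapse onto already-listed configurations (or acquire the circular-ones property), must be handled separately and accounts for $A_3=\{000,111\}$. Third, one must verify minimality directly: each matrix in $\ForbRow$ lacks the circular-ones property, while deleting any single row or column restores it (a finite verification for $\MIV,\overline{\MIV},\MVast,\overline{\MVast}$ and a uniform family argument for $a\miop\MIast{k}$).

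For the algorithm, I would run the linear-time test of Theorem~\ref{thm:booth-and-lueker}, or equivalently the consecutive-ones test on $N$; when it reports failure it also localizes a violating prefix, from which a Tucker configuration of $N$ can be extracted in linear time, and the translation above converts it into a member of $\ForbRow$ contained in $M$, all within $O(\size(M))$ time. Finally, the closing ``Hence'' is a soft consequence: complementing a row preserves the circular-ones property, because the complement of a circular interval of a given column order is again a circular interval of that order, so the class of matrices with the circular-ones property is closed under row complementation, and therefore so is the set of minimal forbidden submatrices. Combined with the characterization and the minimality just established --- which together say that $\ForbRow$ is a complete set of representatives of the minimal forbidden submatrices --- it follows that for $M\in\ForbRow$ and any admissible $a$, the matrix $a\miop M$ is again a minimal forbidden submatrix and hence represents the same configuration as some member of $\ForbRow$.
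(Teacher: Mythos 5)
This theorem is not proved in the paper at all: it is imported verbatim from the author's earlier work \cite{1611.02216}, so there is no in-paper proof to compare against line by line. Judged on its own terms, your proposal follows what is in fact the classical (and, in essence, the cited paper's) route: the row-complementation reduction relative to a distinguished column $c$ is correct in both directions (a circular interval avoiding $c$ becomes a linear interval after cutting at $c$, and complements of circular intervals are circular intervals), Tucker matrices indeed have no empty column, and your observation that $a\miop T^*$ reduces back to $T^*$ under the same trick correctly explains why the starred matrices $\MIast{k}$ and $\MVast$ appear while $\MIV$ can occur unstarred (note that $\MI{k}$ and $\MV$ are themselves circular-ones matrices, so $a\miop\MI{k}$ and $a\miop\MV$ never fail without the extra column, whereas $\MIV$ does). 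The closing argument---closure of the circular-ones property, hence of its minimal obstructions, under row complementation---is also sound.

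What you have, however, is a correct plan rather than a proof: all of the decisive combinatorial content is named but not carried out. Concretely, you must actually verify (a) that every $a\miop M_{\mathit{II}}(k)$, $a\miop M_{\mathit{II}}(k)^*$, $a\miop M_{\mathit{III}}(k)$, $a\miop M_{\mathit{III}}(k)^*$ lacking the circular-ones property contains a member of $\ForbRow$ as a configuration (and likewise that $a\miop\MIV$ and $a\miop\MVast$ for arbitrary $a$ reduce to the four sporadic members of $\ForbRow$ or to some $b\miop\MIast{3}$ --- your phrase ``the source of $\MIV$ and $\overline{\MIV}$'' glosses over the nonconstant $a$'s here); (b) that the configuration type of $a\miop\MIast{k}$ for $k\ge 4$ depends on $a$ exactly through its bracelet class, which requires computing the automorphisms of $\MIast{k}$ and checking that distinct bracelets give distinct configurations; (c) the $k=3$ collapse (e.g.\ $001\miop\MIast{3}$ represents the same configuration as $\overline{\MIast{3}}$ and $011\miop\MIast{3}$ the same as $\MIast{3}$); and (d) minimality of every listed matrix. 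None of these is deep, but each is a genuine finite or family-indexed verification, and the statement is false without them. As a blind reconstruction the strategy would succeed; as a proof it is incomplete at exactly the steps you yourself flag as ``the heart of the argument.''
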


\section{$D$-circular property}\label{sec:Dcirc}

Figure~\ref{fig:smallmatrices} introduces the matrices $Z_1$, $Z_2$, $Z_3$, $Z_4$, $Z_5$, $Z_6$, $Z_7$, and $Z_8$ needed in what follows. (Notice that $Z_4=Z_2\trans$.) The main aim of this section is to give a characterization by minimal forbidden submatrices and a linear-time recognition algorithm for the $D$-circular property. The corresponding set of minimal forbidden submatrices is
\[ \FDCircR^\infty=\FDCircR\cup\bigcup_{k=3}^\infty\{\MIast k,\overline{\MIast k}\} \]
where
\[ \FDCircR=\{\ZetaUnoAst,\ZetaDosAst,\ZetaTresAst,\ZetaCuatroAst,\ZetaCinco,\ZetaCincoTrans,\ZetaSeis,\ZetaSiete,\ZetaOcho,\CoZetaUnoAst,\CoZetaDosAst,\CoZetaCuatroAst,\CoZetaSeis\}. \]
All the matrices in $\FDCircR^\infty$ are displayed explicitly later in Figure~\ref{fig:forb_DCircRinfty} (see Subsection~\ref{ssec:Dcirc-forb}). Notice that each of $\ZetaTresAst$, $\ZetaCinco$, $\ZetaCincoTrans$, $\ZetaSiete$, and $\ZetaOcho$ represents the same configuration as its complement. Hence, for each matrix $M$ in $\FDCircR$, there is some $M'$ in $\FDCircR$ such that $M'$ represents the same configuration as $\overline M$.

This section is organized as follows. In Subsection~\ref{ssec:Dcirc-circR}, we discuss the connection between the $D$-circular property and the circular-ones property. In Subsections~\ref{ssec:QR}, \ref{ssec:UW}, and~\ref{ssec:XY} some auxiliary matrices are shown to contain as a configuration some matrix in $\FDCircR^\infty$ or in $\ForbRow$; these technical results are crucial for the proof of the minimal forbidden submatrix characterization for the $D$-circular property given in Subsection~\ref{ssec:Dcirc-forb}. In Subsection~\ref{ssec:Dcirc-algo}, we give our linear-time recognition algorithm for the $D$-circular property. Finally, in Subsection~\ref{ssec:Dint}, we study the connection between the results about the $D$-circular property obtained along this section and some known results for the $D$-interval property.

\subsection{Connection with the circular-ones property}\label{ssec:Dcirc-circR}

If $M$ is a matrix, we denote by $D(M)$ a matrix that arises from $M$ by adding rows at the bottom as follows: for each nontrivial rows $r$ and $s$ such that $r$ is properly contained in $s$, add a row equal to the set difference $s-r$. (This operator $D(M)$ for matrices $M$ is intimately related but slightly different from the operator $D(H)$ defined in~\cite{MR0437403} and the operator $H^\Subset$ defined in~\cite{MR3573905} for hypergraphs $H$.) As $D(M)$ arises from $M$ by adding rows, we will usually regard the linear orders of the columns of $M$ as linear orders of the columns of $D(M)$ and vice versa. The following fact is immediate consequence of the definitions.

\begin{lem}\label{lem:D(M)} A matrix $M$ has the $D$-circular property if and only if $D(M)$ has the circular-ones property.\end{lem}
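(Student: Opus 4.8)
The plan is to prove both directions with a single linear order of the columns, exploiting the fact that $D(M)$ has exactly the same columns as $M$, so that any linear order $\leqC$ of the columns serves simultaneously for $M$ and for $D(M)$. For the forward direction I would simply observe that if $\leqC$ witnesses the $D$-circular property of $M$, then every row of $M$ and every set difference $s-r$ of two rows of $M$ is a circular interval of $\leqC$; since, by construction, each row of $D(M)$ is either a row of $M$ or one of these set differences, this already shows that $\leqC$ is a circular-ones order of $D(M)$.

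The converse is where the work lies. Fixing a circular-ones order $\leqC$ of $D(M)$, I would show that the same $\leqC$ witnesses the $D$-circular property of $M$. Each row of $M$ is a row of $D(M)$ and hence already a circular interval of $\leqC$, so the task reduces to checking that $s-r$ is a circular interval of $\leqC$ for every pair of rows $r,s$ of $M$. I would proceed by a case analysis on the relative position of $r$ and $s$. When $r$ and $s$ are both nontrivial and $r$ is properly contained in $s$, the difference $s-r$ is by construction a row of $D(M)$ and is therefore a circular interval of $\leqC$. The only remaining cases are those in which $s-r$ is not guaranteed to appear as a row of $D(M)$: namely $s\subseteq r$ (so that $s-r=\emptyset$), $r$ empty (so that $s-r=s$), $s$ equal to the full set of columns (so that $s-r=\overline r$), and the case in which $r$ and $s$ are incomparable.

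The main obstacle, and the only genuinely nonroutine step, is to verify that in each of these remaining cases $s-r$ is a circular interval of $\leqC$ automatically, without any further constraint on $\leqC$. For the cases $s\subseteq r$, $r$ empty, and $s$ the full set of columns, I would invoke the elementary facts that $\emptyset$ and the complement of any circular interval are themselves circular intervals of $\leqC$. For incomparable $r$ and $s$ the key is to write $s-r=s\cap\overline r$ as the intersection of the two circular intervals $s$ and $\overline r$ and to note that such an intersection fails to be a single circular interval only when $s\cup\overline r$ is the whole set of columns; since $s\cup\overline r$ is the whole set of columns exactly when $r\subseteq s$, and $r$ and $s$ are incomparable, the intersection $s-r$ is a single circular interval. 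I expect to justify this intersection fact by cutting the circular order at a column lying outside $s\cup\overline r$, which turns both $s$ and $\overline r$ into ordinary linear intervals whose intersection is again a linear interval, and hence a circular interval. Assembling these cases shows that every set difference of rows of $M$ is a circular interval of $\leqC$, so $\leqC$ witnesses the $D$-circular property of $M$, which completes the converse and the proof.
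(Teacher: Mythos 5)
Your proof is correct and follows essentially the same route as the paper's: the forward direction is immediate from the definitions, and the converse isolates the one problematic case of nontrivial $r$ properly contained in $s$ (where $s-r$ is a row of $D(M)$ by construction) while observing that in all remaining cases $s-r$ is automatically a circular interval of $\leqC$. The only difference is that you explicitly justify, via the argument of cutting the circular order at a column of $r-s$, the fact that the paper merely asserts, namely that $s-r=s\cap\overline{r}$ is a circular interval whenever $r$ is not properly contained in $s$.
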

\begin{proof} By definition, each $D$-circular order of $M$ is a circular-ones order of $D(M)$. Hence, if $M$ has the $D$-circular property, then $D(M)$ has the circular-ones property. For the converse, suppose that $D(M)$ has the circular-ones property. Thus, there is some circular-ones order $\leqC$ of $D(M)$. Let $r$ and $s$ be two rows of $M$. As $r$ and $s$ are rows of $D(M)$, then $r$ and $s$ are circular intervals of $\leqC$. Hence, $s-r$ is also a circular interval of $\leqC$ unless, perhaps, when $r$ is properly contained in $s$. If $r$ or $s$ is trivial, then $s-r$ is trivial, $s$, or the complement of $r$, all of which are circular intervals of $\leqC$. Thus, we assume, without loss of generality, that $r$ and $s$ are nontrivial. Therefore, if $r$ is properly contained in $r$, then $s-r$ is a circular interval of $\leqC$ because $s-r$ is a row of $D(M)$. We conclude that in all cases $s-r$ is a circular interval of $\leqC$. Thus, by definition, $\leqC$ is a $D$-circular order of $M$. This proves that, whenever $D(M)$ has the circular-ones property, $M$ has the $D$-circular property. The proof of the lemma is complete.\end{proof}

By virtue of the above lemma and the fact that complementing some rows of a matrix preserves the circular-ones property, the following result implies that $M$ has the $D$-circular property if and only if $\overline M$ has the $D$-circular property.

\begin{lem}\label{lem:D(co-M)} If $M$ is a $k\times\ell$ matrix, then $D(\overline M)$ arises from $D(M)$ by complementing its first $k$ rows and permuting the remaining rows.\end{lem}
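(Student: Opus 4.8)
The plan is to track exactly how the rows appended by the operator $D$ transform under complementation, using the convention that each row is identified with the set of columns in which it has a $1$ and writing $\overline r$ for the complement (with respect to the full column set $[\ell]$) of a row $r$. First I would record three elementary facts about complementation of rows. (i) A row $r$ of $M$ is nontrivial if and only if the corresponding row $\overline r$ of $\overline M$ is nontrivial, since complementation interchanges the empty row with the full row. (ii) Complementation reverses proper containment: for rows $r$ and $s$, we have $r\subsetneq s$ if and only if $\overline s\subsetneq\overline r$. (iii) The relevant set differences are preserved: whenever $r\subsetneq s$, the difference $\overline r-\overline s$ taken in $\overline M$ equals $\overline r\cap s=s-r$, the difference taken in $M$.

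Armed with these facts, I would compare the two matrices row by row. By definition the first $k$ rows of $D(M)$ are exactly the rows $r_1,\dots,r_k$ of $M$, while the first $k$ rows of $D(\overline M)$ are $\overline{r_1},\dots,\overline{r_k}$; thus the first $k$ rows of $D(\overline M)$ are obtained from those of $D(M)$ by complementing each of them, which is the first half of the claim. For the remaining rows, recall that the rows appended to $M$ in forming $D(M)$ are precisely the sets $s-r$ ranging over the ordered pairs $(r,s)$ of nontrivial rows of $M$ with $r\subsetneq s$. By (i) and (ii), the map $(r,s)\mapsto(\overline s,\overline r)$ is a bijection from these pairs onto the ordered pairs of nontrivial rows of $\overline M$ whose first component is properly contained in the second, i.e., onto exactly the pairs that generate the appended rows of $D(\overline M)$. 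Moreover, by (iii) the row $\overline r-\overline s$ that the pair $(\overline s,\overline r)$ contributes to $D(\overline M)$ coincides as a set of columns with the row $s-r$ that $(r,s)$ contributes to $D(M)$. Hence the collections of appended rows of $D(M)$ and of $D(\overline M)$ are identical and differ at most in the order in which the pairs are enumerated; that is, the remaining rows of $D(\overline M)$ are a permutation of those of $D(M)$. Combining the two halves yields the lemma.

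The only genuine content is the set-theoretic identity in (iii), together with the observation that the pair-to-pair correspondence is order-reversing (a pair $(r,s)$ with $r\subsetneq s$ corresponds to $(\overline s,\overline r)$ rather than to $(\overline r,\overline s)$); once these are pinned down, everything else is bookkeeping. The main point to be careful about is therefore not to conflate the two orderings of a containment pair: it is the larger row $s$ whose complement becomes the smaller row $\overline s$, so the difference $s-r$ appearing in $D(M)$ reappears in $D(\overline M)$ as the difference $\overline r-\overline s$ of the complemented pair, and not as $\overline s-\overline r$.
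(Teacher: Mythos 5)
Your proposal is correct and follows essentially the same route as the paper's proof: both rest on the observations that complementation reverses proper containment of nontrivial rows and that $\overline r-\overline s=s-r$ whenever $r\subsetneq s$, whence the appended rows of $D(\overline M)$ and $D(M)$ agree up to permutation. Your write-up is merely a bit more explicit about the bijection between containment pairs and about nontriviality being preserved under complementation.
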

\begin{proof} Let $M'$ be the matrix that arises from $D(M)$ by complementing its first $k$ rows. By definition of $D(M)$, the first $k$ rows of $M'$ coincide with the first $k$ rows of $D(\overline M)$. Let $i,j\in[k]$. Notice that $\overline M_{\langle i\rangle}$ is properly contained in $\overline M_{\langle j\rangle}$ if and only if $M_{\langle j\rangle}$ is properly contained in $M_{\langle i\rangle}$. Moreover, $\overline M_{\langle j\rangle}-\overline M_{\langle i\rangle}=M_{\langle i\rangle}-M_{\langle j\rangle}$. We conclude that the rows of $D(\overline M)$ and $M'$ are the same up to permutation. This completes the proof of the lemma.\end{proof}

As none of the matrices in $\ForbRow$ has the circular-ones property (see Theorem~\ref{thm:circR}), Lemma~\ref{lem:D(M)} together with the result below shows that none of the matrices in $\FDCircR^\infty$ has the $D$-circular property.

\begin{lem}\label{lem:forb_D} If $F\in\FDCircR^\infty$, then $D(F)$ contains some matrix in $\ForbRow$ as a configuration.\end{lem}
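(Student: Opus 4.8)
The plan is to split $\FDCircR^\infty$ into the two infinite families $\{\MIast k,\overline{\MIast k}\colon k\ge 3\}$ and the finite list $\FDCircR$, and to roughly halve the finite work by exploiting complementation. For the reduction, I would first observe that if $D(F)$ contains some $G\in\ForbRow$ as a configuration, then so does $D(\overline F)$: by Lemma~\ref{lem:D(co-M)} the matrix $D(\overline F)$ arises from $D(F)$ by complementing its first $k$ rows (where $k$ is the number of rows of $F$) and permuting the rest, so a corresponding choice of rows and columns yields $a\miop G$ for a suitable binary sequence $a$, and Theorem~\ref{thm:circR} guarantees that $a\miop G$ represents the same configuration as some member of $\ForbRow$. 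Hence it suffices to treat one matrix from each complementation pair $\{\ZetaUnoAst,\CoZetaUnoAst\}$, $\{\ZetaDosAst,\CoZetaDosAst\}$, $\{\ZetaCuatroAst,\CoZetaCuatroAst\}$, $\{\ZetaSeis,\CoZetaSeis\}$ together with the five self-complementary matrices $\ZetaTresAst$, $\ZetaCinco$, $\ZetaCincoTrans$, $\ZetaSiete$, $\ZetaOcho$.

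For the infinite families I would argue uniformly. Every row of $\MIast k$ has exactly two $1$'s, so for $k\ge 3$ every row is nontrivial and no row is properly contained in another; therefore $D$ adjoins nothing and $D(\MIast k)=\MIast k$. Since $\MIast k=0\cdots0\miop\MIast k$ with the all-zeros sequence a binary bracelet (it is the unique, hence least, member of its class, and $000\in A_3$), we get $\MIast k\in\ForbRow$; likewise $\overline{\MIast k}=1\cdots1\miop\MIast k\in\ForbRow$, and $D(\overline{\MIast k})=\overline{\MIast k}$ by Lemma~\ref{lem:D(co-M)} because $D(\MIast k)$ has no rows past the first $k$. Thus each of these matrices already is a member of $\ForbRow$, and a matrix contains itself as a configuration.

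It then remains to check the nine representatives by hand. For each such $F$ I would list its rows, record every proper containment between nontrivial rows, adjoin the corresponding set differences to obtain $D(F)$, and finally exhibit an explicit row map and column map picking out a member of $\ForbRow$. As a sample, $\ZetaUnoAst$ has rows $\{1,2,3\},\{1\},\{2\},\{3\}$ over four columns, the fourth being empty; each singleton is properly contained in $\{1,2,3\}$, so $D(\ZetaUnoAst)$ gains the rows $\{2,3\},\{1,3\},\{1,2\}$, and these three rows restricted to columns $1,2,3,4$ form $\MIast 3\in\ForbRow$. The remaining representatives are dispatched the same way; since $D$ adds only rows, each $D(F)$ has at most five columns, so the witness is necessarily a small member of $\ForbRow$ of the form $a\miop\MIast k$ with $k\in\{3,4\}$, found by inspection, and the per-matrix witnesses can be tabulated.

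The main obstacle will be bookkeeping rather than ideas. Forming $D(F)$ forces one to inspect all pairs of nontrivial rows for containment, and for the representatives with several rows the resulting $D(F)$ can be large, so the delicate (if elementary) step is locating the correct member of $\ForbRow$ inside it and checking that the selected rows and columns reproduce it up to permutation. One mild point to keep straight is that a witnessing configuration inside $D(F)$ may well use rows adjoined as set differences rather than original rows of $F$; this causes no trouble for the complementation reduction, since Lemma~\ref{lem:D(co-M)} shows that the adjoined rows of $D(\overline F)$ agree, up to permutation, with those of $D(F)$, while only the original $k$ rows are complemented and that is absorbed by Theorem~\ref{thm:circR}. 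As the entire verification is finite and explicit, the lemma follows.
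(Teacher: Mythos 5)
Your proposal is correct and follows essentially the same route as the paper: the infinite families $\MIast k,\overline{\MIast k}$ are handled immediately (they already lie in $\ForbRow$, and $D$ only adds rows), and the thirteen matrices of $\FDCircR$ are dispatched by finite inspection, locating inside each $D(F)$ a small member of $\ForbRow$ of the form $a\miop\MIast k$ with $k\in\{3,4\}$. The only difference is that you halve the finite check via the complementation reduction through Lemma~\ref{lem:D(co-M)} and Theorem~\ref{thm:circR}, which is sound but which the paper does not bother with, simply listing the witnesses for all thirteen matrices.
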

\begin{proof} If $F$ is $\MIast k$ or $\overline{\MIast k}$ for some $k\geq 3$, then the lemma holds immediately because $D(F)$ contains $F$ as a configuration and $F\in\ForbRow$. Moreover, the following assertions can be verified by inspection: (i) if $F\in\{\ZetaUnoAst,\ZetaDosAst,\ZetaTresAst,\ZetaSiete,\CoZetaUnoAst,\CoZetaSeis\}$, then $D(F)$ contains $\MIast 3$ as a configuration; (ii) if $F\in\{\ZetaCinco,\ZetaCincoTrans,\ZetaSeis,\CoZetaDosAst\}$, then $D(F)$ contains $\overline{\MIast 3}$ as a configuration; (iii) if $F=\ZetaCuatroAst$, then $D(F)$ contains $0011\miop\MIast 4$ as a configuration; (iv) if $F=\CoZetaCuatroAst$, then $D(F)$ contains $\MIast 4$ as a configuration; (v) if $F=\ZetaOcho$, then $D(F)$ contains $\overline{\MIast 4}$ as a configuration. Since $\MIast 3$, $\overline{\MIast 3}$, $0011\miop\MIast 4$, $\MIast 4$, and $\overline{\MIast 4}$ belong to $\ForbRow$, the proof of the lemma is complete.\end{proof}

Our next result shows that each matrix in $\ForbRow$ contains some matrix in $\FDCircR^\infty$ as a configuration.

\begin{lem}\label{lem:forb_Circ1R} If $F\in\ForbRow$, then $F$ contains some matrix in $\FDCircR^\infty$ as a configuration.\end{lem}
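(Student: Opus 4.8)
The plan is to run a case analysis guided by the description $\ForbRow=\{a\miop\MIast k\colon k\ge 3,\ a\in A_k\}\cup\{\MIV,\overline{\MIV},\MVast,\overline{\MVast}\}$. Two kinds of members are disposed of immediately. The constant bracelets $00\cdots0$ and $11\cdots1$ (the only members of $A_3$, and members of every $A_k$) give $00\cdots0\miop\MIast k=\MIast k$ and $11\cdots1\miop\MIast k=\overline{\MIast k}$, which already belong to $\FDCircR^\infty$ and so contain themselves as a configuration. For each of the four sporadic matrices $\MIV,\overline{\MIV},\MVast,\overline{\MVast}$ I would exhibit by inspection a single member of $\FDCircR$ occurring among three or four of its rows; this is a bounded check. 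The whole difficulty therefore concerns the matrices $a\miop\MIast k$ with $a$ a non-constant bracelet, which forces $k\ge 4$.

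Before treating these I would reduce the number of cases with the symmetries of the family. Since $\overline{a\miop\MIast k}=\overline a\miop\MIast k$ and $\FDCircR^\infty$ is closed under complementation (as noted after its definition), while complementation also sends $\ForbRow$ to itself up to configuration, it suffices to deal with one of $a\miop\MIast k$ and its complement; thus I may assume $a$ has at least as many $0$'s as $1$'s. Moreover, a cyclic shift or a reversal of $a$ is realized by the corresponding shift or reversal of the rows of $\MIast k$ together with the matching relabelling of its columns, and both are automorphisms of $\MIast k$; hence $a\miop\MIast k$ depends, up to configuration, only on the bracelet class of $a$, and I may place a chosen maximal run of $a$ wherever convenient.

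The core step is to read a forbidden configuration off the local run structure of $a$. Fixing a boundary between a maximal run of $0$'s and a maximal run of $1$'s, I would select at most four rows of $a\miop\MIast k$ around it (the two boundary rows and one or two neighbours) and then restrict to a suitable set of at most five columns so that the resulting submatrix is, up to permutation, a specific member of $\FDCircR^\infty$. The two handles that make this possible are the column shared by the complemented rows, which becomes a full column of the chosen submatrix, and a column that is emptied by the complementation (or the appended zero column). For example, for $a=0011$ one obtains a copy of $\ZetaTresAst$ after deleting an interior column, and the same full-column mechanism produces $\ZetaTresAst$ from the two boundary $0$-rows and the two complemented rows whenever an empty column survives; a profile containing a length-one run instead produces $\ZetaCinco$ (as for $a=0001$). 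For every profile the verification that the extracted submatrix equals the claimed matrix is routine but must be made explicit.

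The hard part will be exactly this extraction for large $k$ and arbitrary run profiles. A complemented row of $a\miop\MIast k$ has size $k-1$, so it can never be used verbatim: one must discard precisely the right interior columns to bring the selected rows down to the sizes of the target matrix, and the naive choice that keeps the ``natural'' columns fails already for $a=0011$. Worse, which member of $\FDCircR^\infty$ is available depends on global features of $a$: an empty column, needed for the zero-column members such as $\ZetaTresAst$, survives only for very particular run patterns, so other profiles must be matched to the zero-column-free members $\ZetaCinco,\ZetaSeis,\ZetaSiete,\ZetaOcho$ and $\ZetaCincoTrans$. Organizing the selection so that some valid choice exists for every profile, and handling the cyclic wrap-around of the row $\{1,k\}$ of $\MIast k$ consistently, is where essentially all of the bookkeeping lies; the complementation symmetry above at least halves the list of profiles to be checked.
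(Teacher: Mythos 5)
Your outer structure matches the paper's: the constant bracelets give $\MIast k$ and $\overline{\MIast k}$ directly, the four sporadic matrices are handled by a bounded inspection (the paper exhibits $\ZetaSeis$ inside $\MIV$ and $\ZetaDosAst$ inside $\MVast$, plus complements), and the complementation/shift/reversal symmetries reduce the nonconstant-bracelet case. But the core of the lemma is exactly the part you leave as a sketch, and the specific plan you describe there does not work. You propose to fix a boundary between a maximal run of $0$'s and a maximal run of $1$'s and select ``the two boundary rows and one or two neighbours,'' i.e.\ a window of at most four \emph{consecutive} rows. This fails for the bracelets that are periodic with period $3$ (e.g.\ $a=001001\ldots001$): there every length-$4$ window of $a$ is $0010$, $0100$, or $1001$, and one can check (for instance by comparing row-sum multisets against the $4$-row members of $\FDCircR^\infty$, and noting that only the single complemented row can have more than two ones in any column selection) that no choice of columns turns four consecutive rows of $a\miop\MIast k$ into a member of $\FDCircR^\infty$. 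This is precisely why the paper splits the nonconstant case by the dichotomy ``$a_{i+3}\neq a_i$ for some $i$'' versus ``$a_{i+3}=a_i$ for all $i$'': in the first case four consecutive rows $i,\ldots,i+3$ together with columns $i+1,i+2,i+3,k+1$ yield $\ZetaTresAst$, $\ZetaCinco$, or $\ZetaSiete$; in the second case $a$ is forced to be a repetition of a length-$3$ block, and the paper must take the \emph{non-consecutive} rows $i,i+1,i+2,i+4$ to extract $\ZetaSeis$ or $\CoZetaSeis$.

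So the gap is twofold. First, a missing organizing idea: the correct case split is on whether $a$ has period $3$ cyclically, not on the local run profile at a boundary, and the periodic case requires skipping a row. Second, even in the non-periodic case you have not committed to explicit row and column maps and target matrices; you yourself flag this as ``where essentially all of the bookkeeping lies,'' but that bookkeeping \emph{is} the proof of the lemma, and as proposed it would break on the period-$3$ bracelets before any bookkeeping could rescue it.
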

\begin{proof} If $F$ is $\MIV$ or $\overline{\MIV}$, then $F_{\langle 4,1,2,3\rangle,\langle 2,4,6,1\rangle}$ is $\ZetaSeis$ or $\overline{\ZetaSeis}$, respectively. If $F$ is $\MVast$ or $\overline{\MVast}$, then $F_{\langle 1,2,4,3\rangle,\langle 1,4,5,6\rangle}$ is $\ZetaDosAst$ or $\overline{\ZetaDosAst}$, respectively. Thus, it only remains to consider the case where $F=a\miop\MIast k$ for some $k\geq 3$ and some binary sequence $a=a_1a_2\ldots a_k$ such that $a\in A_k$. If $a$ consists entirely of $0$'s or entirely of $1$'s, then $F$ coincides with $\MIast k$ or $\overline{\MIast k}$, both of which belong to $\FDCircR^\infty$. Hence, we assume, without loss of generality, that $a$ is nonconstant (i.e, $a$ contains at least a $0$ and at least a $1$) and, necessarily, $k\geq 4$.

Suppose first that $a_{i+3}\neq a_i$ for some $i\in[k]$ (where subindices are modulo $k$). Thus, some sequence $b=b_1b_2b_3b_4$ of length $4$ such that $b_1\neq b_4$ occurs circularly in $a$ at position $i$. If we let $\rho=\langle i,i+1,i+2,i+3\rangle$ and $\sigma=\langle i+1,i+2,i+3,k+1\rangle$ (where sums involving $i$ are modulo $k$), then $a_\rho=b$ and
\[ F_{\rho,\sigma}=(a\miop\MIast k)_{\rho,\sigma}
                  =a_\rho\miop\MIast k_{\rho,\sigma}
                  =b\miop\begin{pmatrix}
                     1 & 0 & 0 & 0\\
                     1 & 1 & 0 & 0\\
                     0 & 1 & 1 & 0\\
                     0 & 0 & 1 & 0
                    \end{pmatrix}. \]
On the one hand, if $b_1=0$, then $b$ is $0001$, $0011$, $0101$, or $0111$ and, consequently, $(F_{\rho,\sigma})_{\langle 3,2,4,1\rangle,\langle 2,1,4,3\rangle}=\ZetaCinco$, $(F_{\rho,\sigma})_{\langle 1,3,4,2\rangle,\langle 1,4,2,3\rangle}=\ZetaTresAst$, $(F_{\rho,\sigma})_{\langle 4,3,2,1\rangle,\langle 2,4,1,3\rangle}=\ZetaSiete$, or $(F_{\rho,\sigma})_{\langle 2,3,4,1\rangle,\langle 4,1,2,3\rangle}=\ZetaCinco$, respectively. On the other hand, if $b_1=1$, then, since $b\miop\MIast k_{\rho,\sigma}=\overline{\overline b\miop\MIast k_{\rho,\sigma}}$ and each of $\ZetaTresAst$, $\ZetaCinco$, and $\ZetaSiete$ represents the same configuration as its complement, the matrix $F_{\rho,\sigma}$ represents the same configuration as $\ZetaTresAst$, $\ZetaCinco$, or $\ZetaSiete$.

Suppose now that $a_{i+3}=a_i$ for each $i\in[k]$ (where subindices are modulo $k$). Since $a$ is a nonconstant bracelet, its prefix of length $3$ must be $001$, $010$, or $011$. Thus, $k$ is a multiple of $3$ and $a$ is the concatenation of $k/3$ copies of that prefix. As a consequence, some sequence $b\in\{01001,10110\}$ occurs circularly in $a$ at position $i$ for some $i\in[k]$. If we let $\rho=\langle i,i+1,i+2,i+4\rangle$ and $\sigma=\langle i+1,i+2,i+4,k+1\rangle$ (where the sums involving $i$ are modulo $k$), then $a_\rho=b_{\langle 1,2,3,5\rangle}$,
\[ F_{\rho,\sigma}=(a\miop\MIast k)_{\rho,\sigma}
                  =a_\rho\miop\MIast k_{\rho,\sigma}
                  =b_{\langle 1,2,3,5\rangle}\miop\begin{pmatrix}
                                                               1 & 0 & 0 & 0\\
                                                               1 & 1 & 0 & 0\\
                                                               0 & 1 & 0 & 0\\
                                                               0 & 0 & 1 & 0
                                                   \end{pmatrix}, \]
and, consequently, $(F_{\rho,\sigma})_{\langle 4,2,1,3\rangle,\langle 4,1,2,3\rangle}$ equals $\ZetaSeis$ or $\CoZetaSeis$ depending whether $b$ is $01001$ or $10110$, respectively. This completes the proof of the lemma.\end{proof}

\subsection{Matrices $Q$ and $R$}\label{ssec:QR}

We will associate with each senary sequence $\lambda$ of length at least $3$ a matrix denoted $R(\lambda)$. As a preliminary result for the proof of Theorem~\ref{thm:main}, we need to prove Lemma~\ref{lem:R} below, which asserts that, for almost all senary sequences $\lambda$ of length at least $3$, $R(\lambda)$ contains some matrix in $\FDCircR^\infty$ as a configuration. We now introduce the necessary definitions.

For each $k\geq 3$ and each $i\in[k]$, we define the following matrices, where in all the cases $i+1$ should be understood modulo $k$:
\begin{itemize}
 \item $Q_0(i,k)$ is the $1\times(k+1)$ matrix whose only row has $1$'s at columns $i$ and $i+1$ and $0$'s at the remaining ones;

 \item $Q_1(i,k)$ is the complement of $Q_0(i,k)$;
 
 \item $Q_2(i,k)$ is the $2\times(k+1)$ matrix whose first row has a $0$ at column $k+1$ and $1$'s at the remaining columns and whose second row has $0$'s at columns $i$, $i+1$, and $k+1$ and $1$'s at the remaining columns;
 
 \item $Q_3(i,k)$ is the complement of $Q_2(i,k)$;
 
 \item $Q_4(i,k)$ is the $2\times(k+1)$ matrix whose first row has a $0$ at column $i+1$ and $1$'s at the remaining columns and whose second row has a $1$ at column $i$ and $0$'s at the remaining columns;

 \item $Q_5(i,k)$ is the complement of $Q_4(i,k)$.
\end{itemize}

Given a senary sequence $\lambda=\lambda_1\lambda_2\ldots\lambda_k$ of length $k$ for some $k\geq 3$, we denote by $R(\lambda)$ the matrix having $k+1$ columns and whose rows are those of $Q_{\lambda_1}(1,k)$, followed by those of $Q_{\lambda_2}(2,k)$, followed by those of $Q_{\lambda_3}(3,k)$, \ldots, followed by those of $Q_{\lambda_k}(k,k)$. For instance,
\[ R(012345)=
\begin{pmatrix}
1 & 1 & 0 & 0 & 0 & 0 & 0\\
1 & 0 & 0 & 1 & 1 & 1 & 1\\
1 & 1 & 1 & 1 & 1 & 1 & 0\\
1 & 1 & 0 & 0 & 1 & 1 & 0\\
0 & 0 & 0 & 0 & 0 & 0 & 1\\
0 & 0 & 0 & 1 & 1 & 0 & 1\\
1 & 1 & 1 & 1 & 1 & 0 & 1\\
0 & 0 & 0 & 0 & 1 & 0 & 0\\
1 & 0 & 0 & 0 & 0 & 0 & 0\\
1 & 1 & 1 & 1 & 1 & 0 & 1
\end{pmatrix}. \]

\newcommand\LemR{Let $\lambda=\lambda_1\lambda_2\ldots\lambda_k$ be a senary sequence of length $k$ such that $k\geq 3$. If $k=3$, suppose additionally that neither $4$ nor $5$ occurs in $\lambda$. Then, $R(\lambda)$ contains some matrix in $\FDCircR^\infty$ as a configuration.}
\begin{lem}\label{lem:R}\LemR\end{lem}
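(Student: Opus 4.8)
The plan is to reduce the statement, by means of two symmetries of the construction, to a bounded number of cases, and then in each case either to locate a sub-configuration of the shape $b\miop\MIast k$ — which always contains a forbidden matrix — or to exhibit one of the explicit small matrices of $\FDCircR$ directly by inspecting a single block together with at most one of its neighbours.

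First I would record the two symmetries. Since $Q_1(i,k)$, $Q_3(i,k)$, $Q_5(i,k)$ are by definition the complements of $Q_0(i,k)$, $Q_2(i,k)$, $Q_4(i,k)$, complementing every row of $R(\lambda)$ yields $R(\overline\lambda)$; because $\FDCircR^\infty$ is closed under complementation (as noted after the definition of $\FDCircR$, using also that $\overline{\MIast k}\in\FDCircR^\infty$) and complementing a matrix preserves the relation of containing a given configuration, $R(\lambda)$ contains a matrix of $\FDCircR^\infty$ as a configuration if and only if $R(\overline\lambda)$ does. Second, each $Q_c(i,k)$ turns into $Q_c(i-1,k)$ under the column permutation that fixes column $k+1$ and sends column $j$ to column $j-1$ modulo $k$ for every $j\in[k]$; hence $R(\lambda)$ and $R(\lambda')$ represent the same configuration whenever $\lambda'$ is a shift of $\lambda$. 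Applying the complement and the shift lets me normalize the values taken by $\lambda$ and the position of a distinguished block, which roughly halves the casework and removes the choice of basepoint.

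Next I would isolate the key tool: \emph{for every $b\in\{0,1\}^k$ with $k\geq 3$, the matrix $b\miop\MIast k$ contains a matrix of $\FDCircR^\infty$ as a configuration.} When $b$ is constant this matrix is $\MIast k$ or $\overline{\MIast k}$; when $b$ is nonconstant it is handled exactly as in the proof of Lemma~\ref{lem:forb_Circ1R} (for $k\geq 4$ by the same two sub-cases, according to whether $b_{i+3}\neq b_i$ for some $i$, and for $k=3$ by checking directly that every nonconstant $b\miop\MIast 3$ represents the same configuration as $\MIast 3$ or $\overline{\MIast 3}$). With this in hand the all-$\{0,1\}$ case of the lemma is immediate, since there $R(\lambda)=\lambda\miop\MIast k$.

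It remains to treat the sequences in which some $\lambda_i\geq 2$, where each such block contributes two rows. Restricted to the columns $[k]$, a block of type $2$ or $3$ already exposes the row $[k]\setminus\{i,i+1\}$ or $\{i,i+1\}$, that is, a cycle row or its complement, so that after the symmetry reductions one again uncovers a sub-configuration $b\miop\MIast k$ and invokes the key tool; the residual configurations are pinned down by inspection to one of $\ZetaUnoAst$, $\ZetaDosAst$, $\ZetaTresAst$, $\ZetaCuatroAst$, $\ZetaCinco$, $\ZetaCincoTrans$, $\ZetaSeis$, $\ZetaSiete$, $\ZetaOcho$ or their complements, matching patterns as in Lemma~\ref{lem:forb_Circ1R}. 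I expect the genuine difficulty to lie entirely in the blocks of types $4$ and $5$: their two rows are a singleton and the complement of a singleton, rather than a cycle row and its complement, so they do not collapse to the $\{0,1\}$ pattern, and the bookkeeping at column $k+1$ must be carried out by hand. This is exactly the source of the hypothesis that, when $k=3$, neither $4$ nor $5$ occurs in $\lambda$: for $k=3$ a single type-$4$ or type-$5$ block makes the construction degenerate and no member of $\FDCircR^\infty$ need appear, whereas for $k\geq 4$ there is enough room to complete the configuration. The main obstacle is therefore to verify that the case split is exhaustive and that every branch lands on a matrix of $\FDCircR^\infty$ rather than on a matrix that merely retains the circular-ones property — as $R(\lambda)$ does for $\lambda=22\cdots2$, which has the circular-ones property yet still contains $\ZetaUnoAst$. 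It is precisely in such branches that the explicit small matrices $\ZetaUnoAst,\dots,\ZetaOcho$, as opposed to the circular-ones obstructions of $\ForbRow$ supplied by Theorem~\ref{thm:circR}, are indispensable.
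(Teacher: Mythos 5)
Your outline correctly identifies the complementation symmetry $R(\overline\lambda)=\overline{R(\lambda)}$ and the shift symmetry, and your ``key tool'' (every $b\miop\MIast k$ contains a member of $\FDCircR^\infty$ as a configuration) is sound and disposes of the all-binary case exactly as the paper does. But the substance of the lemma is the exhaustive case analysis over the non-binary sequences, and that is where your sketch has a genuine gap. Your claim that a block of type $2$ or $3$ ``again uncovers a sub-configuration $b\miop\MIast k$'' does not hold: the second row of $Q_2(i,k)$ restricted to columns $[k]$ is indeed the complement of the cycle row $\{i,i+1\}$, but its entry at column $k+1$ is $0$, whereas a complemented row of $b\miop\MIast k$ must have a $1$ there, and no other column of $R(\lambda)$ can serve as the required all-but-one-constant column. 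Sequences such as $\lambda=222$ or $\lambda=22\cdots2$ therefore do not reduce to the binary tool at all; the paper handles them by exhibiting, via explicit row and column maps, small matrices such as $\ZetaUnoAst$ and $\CoZetaCuatroAst$ directly. So the difficulty is not confined to types $4$ and $5$ as you assert: every combination of block types ($0$/$1$ adjacent to $2$/$3$, two $2$'s adjacent, a $2$ adjacent to a $5$, the sporadic $k=3$ patterns $002,003,012,022,033,222$, and the various placements of $4$'s and $5$'s relative to each other and to $0$'s and $1$'s) requires its own explicit witness in $\FDCircR$, and verifying that these cases are exhaustive is the real work.

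Two further points. First, you use only shifts and complementations to normalize, while the paper also exploits a third symmetry, conjugation (reversal of $\lambda$ with $4$'s and $5$'s interchanged, under which $R$ is invariant up to permuting rows and columns); without it your case count roughly doubles and some of the paper's normalizations (e.g.\ forcing $\lambda_k=1$ as well as $\lambda_2=1$ in one branch) are unavailable. Second, your explanation of the $k=3$ hypothesis --- that a type-$4$ or type-$5$ block makes the construction ``degenerate'' --- is a reasonable guess but is asserted rather than argued; in the paper the hypothesis simply permits the six-pattern inspection of Case $k=3$ to omit $4$'s and $5$'s, and the exclusion is justified elsewhere (in the proof of Theorem~\ref{thm:main}, where $\lambda$ is chosen to avoid $4$ and $5$ when $k=3$). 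As it stands, your proposal is a plan for a proof rather than a proof: the branch that you delegate to the $b\miop\MIast k$ tool would fail, and the branches you delegate to ``inspection'' are precisely where all the content lies.
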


The proof of Lemma~\ref{lem:R} is given in Section~\ref{app:lem:R} of the appendix.

\subsection{Matrices $U$ and $W$}\label{ssec:UW}

We will now associate with some senary sequences $\lambda$ of length $4$, a corresponding matrix $W(\lambda)$ and we will show that, for certain such sequences $\lambda$, $W(\lambda)$ contains some matrix in $\FDCircR$ as a configuration (see Lemma~\ref{lem:W}).

We first define, for each $i\in[4]$, the following matrices:
\begin{itemize}
 \item $U_0(i)$ whose only rows coincides with row $i$ of $\MIV$;
 \item $U_1(i)$ is the complement of $U_0(i)$.
\end{itemize}
For each $i\in[3]$, we define the following matrices, where sums involving $i$ are modulo $3$:
\begin{itemize}
 \item $U_2(i)$ is the $2\times 6$ matrix whose first row coincides with the complement of row $i+1$ of $\MIV$ and the second row coincides with row $i+2$ of $\MIV$;
 \item $U_3(i)$ is the complement of $U_2(i)$.
\end{itemize}
We need two sporadic matrices:
\begin{itemize}
 \item $U_4(3)=\begin{pmatrix}
                  1 & 1 & 1 & 1 & 1 & 0\\
                  0 & 0 & 0 & 0 & 1 & 0
               \end{pmatrix}$;
 \item $U_5(3)$ is the complement of $U_4(3)$.
\end{itemize}

For each senary sequence $\lambda=\lambda_1\lambda_2\lambda_3\lambda_4$ of length $4$ such that $\lambda_1,\lambda_2\in\{0,1,2,3\}$ and $\lambda_4\in\{0,1\}$, we define $W(\lambda)$ as the matrix having six columns and whose rows are those of $U_{\lambda_1}(1)$, followed those of $U_{\lambda_2}(2)$, followed by those of $U_{\lambda_3}(3)$, followed by those of $U_{\lambda_4}(4)$. For instance,
\[ W(0341) =
\begin{pmatrix}
1 & 1 & 0 & 0 & 0 & 0\\
0 & 0 & 0 & 0 & 1 & 1\\
0 & 0 & 1 & 1 & 1 & 1\\
1 & 1 & 1 & 1 & 1 & 0\\
0 & 0 & 0 & 0 & 1 & 0\\
1 & 0 & 1 & 0 & 1 & 0
\end{pmatrix}. \]

\newcommand\LemW{If $\lambda=\lambda_1\lambda_2\lambda_3\lambda_4$ is a senary sequence such that $\lambda_1,\lambda_2\in\{0,1,2,3\}$ and $\lambda_4\in\{0,1\}$, then $W(\lambda)$ contains some matrix in $\FDCircR$ as a configuration.}
\begin{lem}\label{lem:W}\LemW\end{lem}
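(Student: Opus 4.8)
The plan is to prove the lemma by a finite verification: for each admissible $\lambda$ I would exhibit explicit row and column maps $\rho,\sigma$ such that $W(\lambda)_{\rho,\sigma}$ equals a member of $\FDCircR$ up to permutations of its rows and columns, exactly as in the proof of Lemma~\ref{lem:forb_Circ1R}. First I would halve the number of cases by complementation. The definitions give $U_1(i)=\overline{U_0(i)}$, $U_3(i)=\overline{U_2(i)}$, and $U_5(3)=\overline{U_4(3)}$, so complementing every row of $U_j(i)$ yields $U_{j'}(i)$, where $j'$ is obtained from $j$ by the swaps $0\leftrightarrow1$, $2\leftrightarrow3$, $4\leftrightarrow5$; hence $\overline{W(\lambda)}=W(\overline\lambda)$. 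Since $\FDCircR$ is closed under complementation up to configuration (as noted right after its definition), $W(\lambda)$ contains a member of $\FDCircR$ as a configuration if and only if $W(\overline\lambda)$ does. As $\overline{\lambda_4}\neq\lambda_4$, I may assume $\lambda_4=0$, so that $U_{\lambda_4}(4)=U_0(4)$ is the row $010101$ (row~$4$ of $\MIV$), present in $W(\lambda)$ in every remaining case.

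The analysis then splits according to whether a sporadic matrix occurs, i.e.\ whether $\lambda_3\in\{4,5\}$. In the generic case $\lambda_1,\lambda_2,\lambda_3\in\{0,1,2,3\}$, every contributed row is a row of $\MIV$ or its complement, so the rows of $W(\lambda)$ lie among the eight rows of the matrix obtained by stacking $\MIV$ over $\overline{\MIV}$, and $010101$ is always among them. The guiding observation is that $010101$ meets each of the three column blocks $\{1,2\},\{3,4\},\{5,6\}$ in exactly one column; retaining those columns turns each block row $r_j$ into a singleton, so when the three block positions contribute $r_1,r_2,r_3$ one has $W(\lambda)\supseteq\MIV$ and recovers $\ZetaSeis$ just as $\MIV_{\langle4,1,2,3\rangle,\langle2,4,6,1\rangle}=\ZetaSeis$ in Lemma~\ref{lem:forb_Circ1R}. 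When complements are mixed in, ``union'' rows such as $\overline{r_3}=r_1\cup r_2$ appear and a different member is extracted; for example $W(0010)_{\langle3,4,2\rangle,\langle1,2,4,6,5\rangle}=\ZetaCuatroAst$. I would tabulate, over the $4\cdot4\cdot4$ generic triples $(\lambda_1,\lambda_2,\lambda_3)$, which rows occur and read off the member of $\FDCircR$ and its witnessing maps; this member is typically one of $\ZetaSeis$, $\CoZetaSeis$, $\ZetaUnoAst$, $\ZetaCuatroAst$, or $\ZetaCinco$.

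In the sporadic case $\lambda_3\in\{4,5\}$ (with $\lambda_4=0$), the block-$3$ rows are no longer rows of $\MIV$: $U_4(3)$ contributes the long row $111110$ and the singleton $000010$, while $U_5(3)$ contributes their complements. Here I would use the long row as the covering row of the target configuration and carve singletons out of the block rows of positions $1,2$ and out of $000010$; for instance $W(0040)_{\langle3,1,2,4\rangle,\langle1,3,5,6\rangle}=\ZetaUnoAst$. Analogous choices of $\rho,\sigma$ dispatch the remaining $4\cdot4\cdot2$ sporadic triples.

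The main obstacle is organizational rather than conceptual. No single symmetry collapses all admissible sequences: complementation only halves them, and the cyclic symmetry permuting the three blocks is broken both by the asymmetric ranges ($\lambda_1,\lambda_2\in\{0,1,2,3\}$ but $\lambda_3\in\{0,\dots,5\}$) and by the sporadic matrices $U_4(3),U_5(3)$. Moreover the member of $\FDCircR$ that appears is not uniform across cases, and the witnessing column map must be chosen with care: as the $W(0010)$ computation shows, a careless choice yields a matrix merely resembling a forbidden one, while only a specific $\sigma$ produces an exact copy of $\ZetaCuatroAst$. I therefore expect the bulk of the work to be the disciplined bookkeeping needed to cover every case and to verify exactness in each. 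To keep the count manageable I would exploit that a forbidden configuration frequently already appears among the rows of a proper subset of the four positions, so that once a few coordinates are fixed the others become immaterial and many nominal cases merge.
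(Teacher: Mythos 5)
Your proposal is correct and follows essentially the same route as the paper: use $\overline{W(\lambda)}=W(\overline\lambda)$ together with the closure of $\FDCircR$ under complementation up to configuration, then exhibit explicit row and column maps in a finite case analysis (both of your sample computations, $W(0010)_{\langle3,4,2\rangle,\langle1,2,4,6,5\rangle}=\ZetaCuatroAst$ and $W(0040)_{\langle3,1,2,4\rangle,\langle1,3,5,6\rangle}=\ZetaUnoAst$, check out). The only difference is organizational: where you defer a tabulation of the remaining $96$ triples, the paper collapses everything into four cases --- $\lambda$ binary (reducing to Theorem~\ref{thm:circR} and Lemma~\ref{lem:forb_Circ1R}), some $\lambda_i\in\{2,3\}$ (always yielding $\ZetaCuatroAst$ from rows $p_4,p_i,q_i$ with $\sigma$ depending only on $i$ and $\lambda_4$), $\lambda_1\neq\lambda_2$ both binary (again $\ZetaCuatroAst$), and the residual sporadic case (yielding $\ZetaUnoAst$) --- which is precisely the ``cases merge'' economy you anticipate in your closing remark.
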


The proof of Lemma~\ref{lem:W} is given in Section~\ref{app:lem:W} of the appendix.

\subsection{Matrices $X$ and $Y$}\label{ssec:XY}

For each binary sequence $\alpha=\alpha_1\alpha_2\alpha_3\alpha_4$ of length $4$ and each $i\in[3]$, we define $X_i(\alpha)$ as the $6\times 6$ matrix that arises from $\MIV$ by adding a fifth row having $1$'s in columns $2i-1$ and $2i$, and a sixth row having $0$'s in columns $2i-1$ and $2i$ and such that the entries at each of the columns $2i+1$, $2i+2$, $2i+3$, and $2i+4$ (where additions are modulo $6$) coincide in the fifth and sixth rows and are equal to $\alpha_1$, $\alpha_2$, $\alpha_3$, and $\alpha_4$, respectively. For instance,
\[ X_1(\alpha_1\alpha_2\alpha_3\alpha_4)=
     \begin{pmatrix}
       1 & 1 & 0 & 0 & 0 & 0\\
       0 & 0 & 1 & 1 & 0 & 0\\
       0 & 0 & 0 & 0 & 1 & 1\\
       0 & 1 & 0 & 1 & 0 & 1\\
       1 & 1 & \alpha_1 & \alpha_3 & \alpha_3 & \alpha_4 \\
       0 & 0 & \alpha_1 & \alpha_2 & \alpha_3 & \alpha_4
     \end{pmatrix}. \]

For each binary sequence $\gamma=\gamma_1\gamma_2\gamma_3$ of length $3$, we define
\[ Y(\gamma)=
    \begin{pmatrix}
      1 & 1 & 0 & 0 & 0 & 0\\
      0 & 0 & 1 & 1 & 0 & 0\\
      0 & 0 & 0 & 0 & 1 & 1\\
      0 & 1 & 0 & 1 & 0 & 1\\
      \gamma_1 & 1 & \gamma_2 & 1 & \gamma_3 & 1\\
      \gamma_1 & 0 & \gamma_2 & 0 & \gamma_3 & 0
    \end{pmatrix}. \]

\newcommand\LemX{Let $\alpha$ be a binary sequence of length $4$ and let $i\in[3]$. If $\alpha\notin\{0000,0011,1100,1111\}$, then $X_i(\alpha)$ contains as a configuration some matrix in $\ForbRow$ having fewer than $6$ columns.}
\begin{lem}\label{lem:X}\LemX\end{lem}

\newcommand\LemY{Let $\gamma$ be a binary sequence of length $3$. If $\gamma$ is nonconstant, then $Y(\gamma)$ contains as a configuration some matrix in $\ForbRow$ having fewer than $6$ columns.}
\begin{lem}\label{lem:Y}\LemY\end{lem}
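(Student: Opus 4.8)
The plan is to combine two symmetries of the family $\{Y(\gamma)\}$ with a single explicit computation, invoking Theorem~\ref{thm:circR} to convert a small submatrix lacking the circular-ones property into a forbidden configuration. Observe first that the members of $\ForbRow$ with fewer than six columns are exactly $\MIast{3}$, $\overline{\MIast{3}}$ (with four columns) and the matrices $a\miop\MIast{4}$ with $a\in A_4$ (with five columns); however, the argument will not need to pinpoint which of these arises. Indeed, by Theorem~\ref{thm:circR} it suffices to exhibit, for each nonconstant $\gamma$, a submatrix of $Y(\gamma)$ with at most five columns that does not have the circular-ones property: any configuration in $\ForbRow$ that such a submatrix contains automatically has fewer than six columns.

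First I would record two symmetries. The rows $1,2,3,4$ of $Y(\gamma)$ are exactly $\MIV$, and the column blocks $(1,2)$, $(3,4)$, $(5,6)$ play cyclically symmetric roles: the simultaneous cyclic permutation of these blocks together with the cyclic permutation of rows $1,2,3$ (fixing row $4$ and carrying rows $5,6$ along) shows that $Y(\gamma)$ and $Y(\gamma')$ represent the same configuration whenever $\gamma'$ is a cyclic shift of $\gamma$. Second, a direct comparison of rows shows that $Y(\overline\gamma)$ arises from $Y(\gamma)$ by swapping rows $5$ and $6$ and complementing those two rows, leaving the columns untouched; that is, $Y(\overline\gamma)=a\miop\pi\bigl(Y(\gamma)\bigr)$ for the transposition $\pi$ of rows $5,6$ and $a=000011$. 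Since complementing and permuting rows preserves the circular-ones property of every column-induced submatrix, and since (by the last sentence of Theorem~\ref{thm:circR}) row complementation sends members of $\ForbRow$ to configurations of members of $\ForbRow$ with the same number of columns, $Y(\gamma)$ contains a member of $\ForbRow$ with fewer than six columns if and only if $Y(\overline\gamma)$ does.

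These two symmetries act transitively on the six nonconstant sequences of length $3$: the cyclic class of $001$ is $\{001,010,100\}$, and complementation sends it to the cyclic class $\{110,101,011\}$. Hence it suffices to treat the single case $\gamma=001$. For this $\gamma$, I would take the submatrix $N$ on rows $1,2,4,5$ and columns $1,2,3,4,5$, namely
\[ N=\begin{pmatrix} 1&1&0&0&0 \\ 0&0&1&1&0 \\ 0&1&0&1&0 \\ 0&1&0&1&1 \end{pmatrix}, \]
whose rows, read as column sets, are $\{1,2\}$, $\{3,4\}$, $\{2,4\}$, and $\{2,4,5\}$. In any circular order of the five columns, $\{2,4\}$ forces columns $2$ and $4$ to be adjacent and $\{2,4,5\}$ forces $5$ to lie at one end of the resulting three-element arc; but then either $2$ or $4$ already has both of its cyclic neighbours inside $\{2,4,5\}$, contradicting that $\{1,2\}$ (resp.\ $\{3,4\}$) makes $1$ adjacent to $2$ (resp.\ $3$ adjacent to $4$). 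Thus $N$ lacks the circular-ones property, so by Theorem~\ref{thm:circR} it contains some matrix in $\ForbRow$ as a configuration, which has at most five columns.

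I expect the main obstacle to be the careful bookkeeping in the complementation symmetry: one must check that the row operation relating $Y(\overline\gamma)$ to $Y(\gamma)$ is exactly a permutation followed by complementation of certain rows, so that the closure of $\ForbRow$ under row complementation in Theorem~\ref{thm:circR} can be applied to transport the forbidden configuration back and forth. The cyclic-shift symmetry and the final five-column computation are routine by comparison.
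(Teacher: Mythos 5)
Your proof is correct and follows essentially the same route as the paper: the same shift and complementation symmetries (with the same identity $Y(\overline\gamma)_{\langle 1,2,3,4,6,5\rangle}=000011\miop Y(\gamma)$ and the same appeal to the closure of $\ForbRow$ under row complementation) reduce everything to $\gamma=001$. The only difference is the finish, where the paper exhibits the explicit configuration $Y(001)_{\langle 1,4,2,5\rangle,\langle 1,2,4,3,5\rangle}=0001\miop\MIast 4$ while you verify directly that a five-column submatrix fails the circular-ones property and invoke Theorem~\ref{thm:circR}; both finishes are valid.
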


The proofs of Lemmas~\ref{lem:X} and~\ref{lem:Y} are given in Sections~\ref{app:lem:X} and~\ref{app:lem:Y}, respectively, of the appendix.

\subsection{Forbidden submatrix characterization of the $D$-circular property}\label{ssec:Dcirc-forb}

The following is the main structural result of this work and gives a minimal forbidden submatrix characterization of the circularly compatible ones property. Recall the definitions of $\FDCircR$ and $\FDCircR^\infty$ given at the beginning of this section. For the readers' sake, all the matrices in $\FDCircR^\infty$ are displayed explicitly in Figure~\ref{fig:forb_DCircRinfty}.

\begin{figure}[t!]
\ffigbox[\textwidth]{%

\begin{subfloatrow}
\ffigbox[\FBwidth]{
 $\begin{pmatrix}
         1 & 1 & 1 & 0\\
         1 & 0 & 0 & 0\\
         0 & 1 & 0 & 0\\
         0 & 0 & 1 & 0
      \end{pmatrix}$}
{\caption{$\ZetaUnoAst$}}

\ffigbox[\FBwidth]{
 $\begin{pmatrix}
         1 & 0 & 0 & 0\\
         1 & 1 & 0 & 0\\
         1 & 1 & 1 & 0\\
         0 & 1 & 0 & 0
      \end{pmatrix}$}
{\caption{$\ZetaDosAst$}}\quad

\ffigbox[\FBwidth]{
  $\begin{pmatrix}
         1 & 0 & 0 & 0\\
         1 & 1 & 0 & 0\\
         1 & 1 & 1 & 0\\
         1 & 0 & 1 & 0
      \end{pmatrix}$}
{\caption{$\ZetaTresAst$}}

\ffigbox[\FBwidth]{
  $\begin{pmatrix}
         1 & 1 & 1 & 0 & 0\\
         0 & 1 & 1 & 1 & 0\\
         0 & 0 & 1 & 0 & 0
      \end{pmatrix}$}
{\caption{$\ZetaCuatroAst$}}

\ffigbox[\FBwidth]{
$\begin{pmatrix}
1 & 0 & 0 & 1\\
1 & 1 & 0 & 0\\
1 & 1 & 1 & 0\\
0 & 1 & 0 & 0
\end{pmatrix}$}
{\caption{$\ZetaCinco$}}
\end{subfloatrow}

\bigskip

\begin{subfloatrow}
\ffigbox[\FBwidth]{
$\begin{pmatrix}
1 & 1 & 1 & 0\\
0 & 1 & 1 & 1\\
0 & 0 & 1 & 0\\
1 & 0 & 0 & 0
\end{pmatrix}$}
{\caption{$\ZetaCincoTrans$}}

\ffigbox[\FBwidth]{
$\begin{pmatrix}
1 & 1 & 1 & 0\\
1 & 0 & 0 & 1\\
0 & 1 & 0 & 0\\
0 & 0 & 1 & 0
\end{pmatrix}$}
{\caption{$\ZetaSeis$}}\quad

\ffigbox[\FBwidth]{
$\begin{pmatrix}
1 & 1 & 1 & 0\\
1 & 0 & 0 & 1\\
0 & 1 & 0 & 1\\
0 & 0 & 1 & 0\\
\end{pmatrix}$}
{\caption{$\ZetaSiete$}}

\ffigbox[\FBwidth]{
$\begin{pmatrix}
1 & 1 & 1 & 0 & 1\\
0 & 1 & 1 & 1 & 1\\
0 & 0 & 1 & 0 & 0\\
0 & 0 & 0 & 0 & 1
\end{pmatrix}$}
{\caption{$\ZetaOcho$}}

\ffigbox[\FBwidth]{
$\begin{pmatrix}
0 & 0 & 0 & 1\\
0 & 1 & 1 & 1\\
1 & 0 & 1 & 1\\
1 & 1 & 0 & 1
\end{pmatrix}$}
{\caption{$\CoZetaUnoAst$}}

\end{subfloatrow}

\bigskip

\begin{subfloatrow}
\ffigbox[\FBwidth]{
$\begin{pmatrix}
0 & 1 & 1 & 1\\
0 & 0 & 1 & 1\\
0 & 0 & 0 & 1\\
1 & 0 & 1 & 1
\end{pmatrix}$}
{\caption{$\CoZetaDosAst$}}

\ffigbox[\FBwidth]{
$\begin{pmatrix}
0 & 0 & 0 & 1 & 1\\
1 & 0 & 0 & 0 & 1\\
1 & 1 & 0 & 1 & 1
\end{pmatrix}$}
{\caption{$\CoZetaCuatroAst$}}\quad

\ffigbox[\FBwidth]{
$\begin{pmatrix}
0 & 0 & 0 & 1\\
0 & 1 & 1 & 0\\
1 & 0 & 1 & 1\\
1 & 1 & 0 & 1
\end{pmatrix}$}
{\caption{$\CoZetaSeis$}}

\ffigbox[\FBwidth]{
$\begin{pmatrix}
           1 & 1 &        &       &   & 0\\
             & 1 & 1      &       &   & 0\\
             &   & \ddots & \ddots &  & \vdots \\
             &   &        &     1 & 1 & 0\\
           1 & 0 & \cdots &     0 & 1 & 0
\end{pmatrix}$
}{\caption{$\MIast k$ for each $k\geq 3$, where omitted entries are $0$'s}}

\ffigbox[\FBwidth]{
$\begin{pmatrix}
           0 & 0 &        &       &   & 1\\
             & 0 & 0      &       &   & 1\\
             &   & \ddots & \ddots &  & \vdots \\
             &   &        &     0 & 0 & 1\\
           0 & 1 & \cdots &     1 & 0 & 1
\end{pmatrix}$
}{\caption{$\overline{\MIast k}$ for each $k\geq 3$, where omitted entries are $1$'s}}

\end{subfloatrow}
}
{\caption{Full list of matrices in $\FDCircR^\infty$ where $k$ denotes the number of rows}\label{fig:forb_DCircRinfty}}
\end{figure}

\begin{thm}\label{thm:main} For each matrix $M$, all the following assertions are equivalent:
\begin{enumerate}[(i)]
 \item\label{it:main1} $M$ has the $D$-circular property;
 \item\label{it:main2} $M$ has the circular-ones property and $M$ contains no matrix in $\FDCircR$ as a configuration;
 \item\label{it:main3} $M$ contains no matrix in $\FDCircR^\infty$ as a configuration.
\end{enumerate}\end{thm}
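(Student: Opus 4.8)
My plan is to prove the three statements equivalent by establishing the cycle (i) $\Rightarrow$ (iii) $\Rightarrow$ (ii) $\Rightarrow$ (i), using Lemma~\ref{lem:D(M)} throughout to pass between the $D$-circular property of $M$ and the circular-ones property of $D(M)$. The first implication is the easiest: the $D$-circular property is inherited by every configuration of $M$, since restricting a $D$-circular order to a subset of columns keeps every row and every set difference a circular interval, while deleting rows only removes constraints; as no matrix of $\FDCircR^\infty$ has the $D$-circular property (this was recorded through Lemmas~\ref{lem:D(M)} and~\ref{lem:forb_D}), a matrix with the property can contain none of them. For (iii) $\Rightarrow$ (ii), the inclusion $\FDCircR\subseteq\FDCircR^\infty$ gives at once that $M$ avoids every matrix of $\FDCircR$, and the circular-ones property of $M$ follows contrapositively: were $M$ without it, Theorem~\ref{thm:circR} would place some $F\in\ForbRow$ in $M$, and Lemma~\ref{lem:forb_Circ1R} would then exhibit a matrix of $\FDCircR^\infty$ inside $F$, hence inside $M$, against (iii).

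The content of the theorem is the implication (ii) $\Rightarrow$ (i), which I would argue contrapositively. Assuming $M$ fails the $D$-circular property, Lemma~\ref{lem:D(M)} and Theorem~\ref{thm:circR} supply some $F\in\ForbRow$ occurring as a configuration of $D(M)$, and the goal is to lift it to a configuration of $M$ lying in $\FDCircR^\infty$. Fixing a row and column map that realize $F$ in $D(M)$, each chosen row of $D(M)$ is either an original row of $M$ or a difference $s-r$ arising from nontrivial rows $r\subsetneq s$ of $M$; replacing each difference row by the generating pair $\{s,r\}$ and keeping the columns used by $F$ produces an honest configuration of $M$. Assigning to each row of $F$ a label in $\{0,1,\dots,5\}$---recording whether it lifts to an original row or its complement, or to a difference realized in one of the two canonical shapes encoded by $Q_2,Q_3$ or by $Q_4,Q_5$ (respectively by the gadgets $U_0,\dots,U_5$ in the sporadic case)---collects these labels into a senary sequence $\lambda$, so that the lifted configuration coincides with $R(\lambda)$ when $F=a\miop\MIast k$ and with $W(\lambda)$ when $F$ is one of $\MIV,\overline{\MIV},\MVast,\overline{\MVast}$. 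Lemma~\ref{lem:R} (respectively Lemma~\ref{lem:W}) then furnishes a matrix of $\FDCircR^\infty$ (respectively $\FDCircR$) inside this configuration, hence inside $M$; and since the hypothesis (ii) guarantees that $M$ has the circular-ones property, so that $M$ avoids $\MIast k$ and $\overline{\MIast k}$, the matrix produced must in fact lie in $\FDCircR$, contradicting (ii).

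I expect the genuine difficulty to be in making the labelling well defined and in the case analysis it triggers. A difference row $s-r$ pins down the pair $(s,r)$ on the selected columns only outside $s-r$, where each entry pair may be $(0,0)$ or $(1,1)$; choosing a label amounts to selecting a canonical realization of this ambiguity---or collapsing to an original row when $r$ restricts to the empty set---and one must verify that an admissible choice always exists, possibly after discarding further columns, and that no realization escapes the gadget families. This is precisely what the exceptional hypothesis of Lemma~\ref{lem:R} reflects (for $k=3$ the labels $4$ and $5$ are forbidden), and it is why Lemmas~\ref{lem:X} and~\ref{lem:Y}, locating forbidden submatrices with fewer than six columns inside $X_i(\alpha)$ and $Y(\gamma)$, are needed to complete the sporadic $W(\lambda)$ analysis. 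The bulk of the work is therefore the exhaustive verification, over all admissible senary sequences, that $R(\lambda)$ and $W(\lambda)$ always contain a member of $\FDCircR^\infty$; granting Lemmas~\ref{lem:R} and~\ref{lem:W}, the lifting argument closes the cycle and yields the theorem.
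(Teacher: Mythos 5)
Your architecture is the paper's: the easy implications are handled in essentially the same way, and the hard direction is reduced to lifting an obstruction $F\in\ForbRow$ of $D(M)$ back into $M$ through the gadgets $Q_j$ and $U_j$, assembling a senary word $\lambda$, and invoking Lemmas~\ref{lem:R} and~\ref{lem:W}. The gap is in the step you yourself flag as the difficulty: you do not supply the mechanism that forces every difference row of $D(M)$ used by $F$ to be realized by one of the gadget shapes. If $D(M)_{\langle u\rangle}=M_{\langle s\rangle}-M_{\langle r\rangle}$ has support $\{i,i+1\}$, then on the remaining columns $r$ and $s$ merely agree, and their common values can a priori be an arbitrary mixture of $0$'s and $1$'s; in that case the pair $(s,r)$ matches none of $Q_0,\dots,Q_5$, the lifted configuration is not $R(\lambda)$ for any $\lambda$, and no amount of ``discarding further columns'' repairs this by itself.

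The paper's resolution is a double minimality that your plan never sets up: take $M$ with the minimum number of columns among counterexamples, take $F$ with the minimum number of columns among members of $\ForbRow$ contained in $D(M)$ as a configuration, and restrict $M$ to the columns of $F$. A mixed realization then yields, inside $D(M)$, a matrix $a'\miop\MIast{\vert a'\vert}$ on strictly fewer columns (built from a maximal block of $0$'s of $M_{\langle s\rangle}$ together with the corresponding rows of $F$ and the row $s$ itself), contradicting the choice of $F$. This is the $\MIast k$-case analogue of what Lemmas~\ref{lem:X} and~\ref{lem:Y} do in the sporadic case, and those two lemmas likewise only bite because they produce $\ForbRow$ matrices with fewer than six columns, i.e., fewer than $F$ has. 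The same framework, together with the freedom to replace $M$ and $F$ by $\overline M$ and $\tilde F$ via Lemma~\ref{lem:D(co-M)} and Theorem~\ref{thm:circR}, is what lets one assume $a=000$ when $k=3$ and so avoid the labels $4$ and $5$ excluded by the hypothesis of Lemma~\ref{lem:R}. Without this setup the labelling you describe is not well defined and the argument does not close.
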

\begin{proof} That assertion~\eqref{it:main1} implies assertion~\eqref{it:main2} follows from Lemmas~\ref{lem:D(M)} and~\ref{lem:forb_D} because the circular-ones property of a matrix $M$ is inherited by any matrix that is contained in $M$ as a configuration and the same holds for the $D$-circular property. That assertion~\eqref{it:main2} implies assertion~\eqref{it:main3} follows from the fact that none of $\MIast k$ and $\overline{\MIast k}$ has the circular-ones property for any $k\geq 3$.

It only remains to prove that assertion~\eqref{it:main3} implies assertion~\eqref{it:main1}. Suppose, for a contradiction, that assertion~\eqref{it:main3} does not imply assertion~\eqref{it:main1} and let $M$ be a matrix having the minimum possible number of columns such that $M$ contains no matrix in $\FDCircR^\infty$ as a configuration and $M$ does not have the $D$-circular property. Let $k_M$ and $\ell_M$ be the number of rows and columns of $M$, respectively, and let $k_D$ be the number of rows of $D(M)$. By Lemma~\ref{lem:D(M)}, $D(M)$ does not have the circular-ones property. By Theorem~\ref{thm:circR}, $D(M)$ contains some matrix in $\ForbRow$ as a configuration. Let $F$ be a matrix in $\ForbRow$ having the minimum possible number of columns such that $F$ is contained in $D(M)$ as a configuration. Let $\rho$ and $\sigma$ be a row map and a column map such that $D(M)_{\rho,\sigma}=F$. Thus, $(D(M)_{\id_{k_D},\sigma})_\rho=F$. Since $D(M)_{\id_{k_D},\sigma}$ represents the same configuration as some matrix that arises from $D(M_{\id_{k_M},\sigma})$ by the addition of some  (eventually zero) empty rows and $F$ has no empty rows, there is some row map $
\rho'$ such that $(D(M_{\id_{k_M},\sigma}))_{\rho'}=F$. By Lemma~\ref{lem:D(M)}, $M_{\id_{k_M},\sigma}$ does not have the $D$-circular property. We replace $M$ by $M_{\id_{k_M},\sigma}$ and $\rho$ by $\rho'$. Thus, $D(M)_\rho=F$. Hence, it still holds that $M$ is a matrix having the minimum possible number of columns such that $M$ contains no matrix in $\FDCircR^\infty$ as a configuration and $M$ does not have the $D$-circular property. Let $\tilde F$ be the matrix in $\ForbRow$ that represents the same configuration as $\overline F$ (see Theorem~\ref{thm:circR}). Notice that, if necessary, we may further replace the matrices $M$ and $F$ by the matrices $\overline M$ and $\tilde F$, respectively. In fact, $\overline M$ is also a matrix having the minimum possible number of columns such that $\overline M$ contains no matrix in $\FDCircR^\infty$ as a configuration and $\overline M$ does not have the $D$-circular property (by virtue of Lemmas~\ref{lem:D(M)} and~\ref{lem:D(co-M)}), and $\overline M$ contains $\tilde F$ as a configuration, where $\tilde F\in\ForbRow$ and has the minimum possible number of columns among the matrices in $\ForbRow$ contained in $D(\overline M)$ as a configuration (because any matrix that arises from a matrix in $\ForbRow$ by complementing some rows represents the same configuration as some matrix in $\ForbRow$; see Theorem~\ref{thm:circR}).

Suppose first that $F=a\miop\MIast k$ for some $k\geq 3$ and some $a\in A_k$. (Notice that since $M$ and $F$ have the same number of columns, necessarily $k+1=\ell_M$.) By replacing $M$ and $F$ by $\overline M$ and $\tilde F$ (if necessary), we assume, without loss of generality, that if $k=3$ then $a=000$. (Recall that $A_3=\{000,111\}$.) For each $i\in[k]$ and each $j\in\{0,1,2,3,4,5\}$, we define the statement $S_{i,j}$ as follows:
\begin{enumerate}
 \item[$(S_{i,j})$] Each row of $Q_j(i,k)$ coincides with some row of $M$.
\end{enumerate}
We will prove that, for each $i\in[k]$, $S_{i,j}$ holds for some $j\in\{0,1,2,3,4,5\}$. More precisely, for each $i\in[k]$, the following claims hold (where $i+1$ should be understood modulo $k$):
\begin{enumerate}[{Claim }1:]
\item\label{claim:T1} \emph{If $a_i=0$, then $S_{i,0}$, $S_{i,1}$, $S_{i,2}$, or $S_{i,3}$ holds.} Suppose that $a_i=0$ and let $u=\rho(i)$. If $u\in[k_M]$, then $S_{i,0}$ holds because $M_{\langle u\rangle}=D(M)_{\langle u\rangle}=F_{\langle i\rangle}=Q_0(i,k)$. Suppose otherwise and let $r,s\in[k_M]$ such that $M_{\langle r\rangle}$ is properly contained in $M_{\langle s\rangle}$ and $D(M)_{\langle u\rangle}=M_{\langle s\rangle}-M_{\langle r\rangle}$. Since $D(M)_{\langle u\rangle}$ has $1$'s in columns $i$ and $i+1$ and $0$'s in the remaining columns, necessarily $M_{\langle r\rangle}$ and $M_{\langle s\rangle}$ coincide in all the columns except for the columns $i$ and $i+1$, in each of which $M_{\langle r\rangle}$ has a $0$ and $M_{\langle s\rangle}$ has a $1$. We consider first the case where the entry at column $k+1$ of $M_{\langle r\rangle}$ (and thus also of $M_{\langle s\rangle}$) is $0$. Suppose, for a contradiction, that among the entries of $M_{\langle s\rangle}$ in columns different from $i$, $i+1$, and $k+1$, there is some $0$ and there is some $1$. Thus, there are $j,h\in[k]$ such that $M_{\langle s\rangle}$ has $0$'s in all the columns in $[j,h]_{[k]}$ and has $1$'s in columns $j-1$ and $h+1$ (where the sum and the subtraction are modulo $k$) where $(j-1,h+1)\neq(i+1,i)$. Hence, if $a'$ is the sequence that arises from $a_{\rho\circ\langle j-1,j,j+1\ldots,h\rangle}$ by appending a $0$, $\rho'=\langle\rho(j-1),\rho(j),\rho(j+1),\ldots,\rho(h),s\rangle$, and $\sigma'=\langle j-1,j,j+1,\ldots,h+1,k+1\rangle$ 
(where sums and subtractions are modulo $k$), then $D(M)_{\rho',\sigma'}=a'\miop\MIast{\vert a'\vert}$ is a matrix representing the same configuration as some matrix in $\ForbRow$ (see Theorem~\ref{thm:circR}) that is contained in $D(M)$ as a configuration and has $\vert a'\vert+1<k+1=\ell_M$ columns, contradicting the choice of $F$. This contradiction shows that $M_{\langle s\rangle}$ has all $0$'s or all $1$'s in the columns different from $i$, $i+1$, and $k+1$. Hence, either $S_{i,0}$ holds because $M_{\langle s\rangle}$ coincides with $Q_0(i,k)$ or $S_{i,2}$ holds because $M_{\langle s\rangle}$ and $M_{\langle r\rangle}$ coincide with the rows of $Q_2(i,k)$. The case where the entry at column $k+1$ of $M_{\langle r\rangle}$ (and thus also of $M_{\langle s\rangle}$) is $1$ can be handled symmetrically (by reversing the roles $M_{\langle r\rangle}$ and $M_{\langle s\rangle}$ and of the $0$'s and $1$'s in the analysis) in order to prove that either $S_{i,1}$ holds (because $M_{\langle r\rangle}$ coincides with $Q_1(i,k)$) or $S_{i,3}$ holds (because $M_{\langle r\rangle}$ and $M_{\langle s\rangle}$ coincide with the rows of $Q_3(i,k)$). This completes the proof of the claim.

\item\label{claim:T2} \emph{If $a_i=1$, then $S_{i,0}$, $S_{i,1}$, $S_{i,4}$, or $S_{i,5}$ holds.} Suppose $a_i=1$ and let $u=\rho(i)$. If $u\in[k_M]$, then $S_{i,1}$ holds because $M_{\langle u\rangle}=D(M)_{\langle u\rangle}=\overline{F_{\langle i\rangle}}=Q_1(i,k)$. Suppose otherwise and let $r,s\in[k_M]$ such that $M_{\langle r\rangle}$ is properly contained in $M_{\langle s\rangle}$ and $D(M)_{\langle u\rangle}=M_{\langle s\rangle}-M_{\langle r\rangle}$. Since $D(M)_{\langle u\rangle}$ has $0$'s in columns $i$ and $i+1$ and $1$'s in the remaining columns, necessarily $M_{\langle s\rangle}$ has $1$'s in all the columns different from $i$ and $i+1$, $M_{\langle r\rangle}$ has $0$'s in all the columns different from $i$ and $i+1$, and $M_{\langle r\rangle}$ and $M_{\langle s\rangle}$ coincide in columns $i$ and $i+1$. Let $x$ and $y$ be the common entries of $M_{\langle r\rangle}$ and $M_{\langle s\rangle}$ at columns $i$ and $i+1$, respectively. If $(x,y)=(0,0)$, then $S_{i,1}$ holds because $M_{\langle s\rangle}$ coincides with $Q_1(i,k)$. If $(x,y)=(1,1)$, then $S_{i,0}$ holds because $M_{\langle r\rangle}$ coincides with $Q_0(i,k)$. Otherwise, $S_{i,4}$ or $S_{i,5}$ holds because $M_{\langle s\rangle}$ and $M_{\langle r\rangle}$ coincide with the rows of $Q_4(i,k)$ or the rows of $Q_5(i,k)$, depending on whether $(x,y)$ is $(1,0)$ or $(0,1)$, respectively. This completes the proof of the claim.
\end{enumerate}

Because of the above claims, there is some senary sequence $\lambda=\lambda_1\lambda_2\ldots\lambda_k$ such that $S_{i,\lambda_i}$ holds for each $i\in[k]$. If $k=3$, we choose $\lambda$ in such a way that neither $4$ nor $5$ occurs in $\lambda$ (which is possible by virtue of Claim~\ref{claim:T1} because we are assuming that if $k=3$ then $a=000$). Hence, by Lemma~\ref{lem:R}, $R(\lambda)$ contains some matrix $F'$ in $\FDCircR^\infty$ as a configuration; in particular $F'$ has pairwise different rows. Since, by construction, each row of $R(\lambda)$ coincides with some row $M$, $M$ contains $F'$ as a configuration. This contradicts the assumption that $M$ contains no matrix in $\FDCircR^\infty$ as a configuration. This contradiction proves that $F\neq a\miop\MIast k$ for every $k\geq 3$ and every $a\in A_k$. As $F\in\ForbRow$, necessarily $F\in\{\MIV,\overline{\MIV},\MVast,\overline{\MVast}\}$. Because of the choice of $F$, $D(M)$ contains as a configuration no matrix in $\ForbRow$ having fewer than $6$ columns.

By replacing the matrices $M$ and $F$ by the matrices $\overline M$ and $\tilde F$ (if necessary), we assume, without loss of generality, that $F\in\{\MIV,\MVast\}$. Hence, there is some $a\in\{0000,0010\}$ such that $D(M)$ contains $a\miop\MIV$ as a configuration (because $\MVast$ represents the same configuration as $0010\miop\MIV$). Let $\rho'$ and $\sigma'$ be a row and a column map such that $D(M)_{\rho',\sigma'}=a\miop\MIV$. Reasoning as we did earlier, there is some row map $\rho''$ such that $D(M_{\id_{k_M},\sigma'})_{\rho''}=a\miop\MIV$. We replace $M$ by $M_{\id_{k_M},\sigma'}$ and $\rho$ by $\rho''$. Thus, $D(M)_\rho=a\miop\MIV$ and it still holds that $M$ contains no matrix in $\FDCircR^\infty$ as a configuration and $D(M)$ contains as a configuration no matrix in $\ForbRow$ having fewer than $6$ columns.

For each $i\in\{1,2,3,4\}$ and each $j\in\{0,1,2,3,4,5\}$ satisfying the following two conditions:
\begin{itemize}
 \item if $i\in\{1,2\}$, then $j\in\{0,1,2,3\}$, and 
 \item if $i=4$, then $j\in\{0,1\}$, 
\end{itemize}
we define the following statement:
\begin{enumerate}
 \item[$(T_{i,j})$] Each row of $U_j(i)$ coincides with some row of $M$.
\end{enumerate}

We will prove that for each $i\in[4]$, there is some $j\in\{0,1,2,3,4,5\}$ such that $T_{i,j}$ holds. More precisely, the following claims hold:
\begin{enumerate}[{Claim }1:]
 \item \emph{If $a_i=0$ for some $i\in[3]$, then $T_{i,0}$, $T_{i,1}$, $T_{i,2}$, or $T_{i,3}$ holds.} Suppose $a_i=0$ and let $u=\rho(i)$. If $u\in[k_M]$, then $T_{i,0}$ holds because $M_{\langle u\rangle}=D(M)_{\langle u\rangle}=U_0(i)$. Suppose otherwise and let $r,s\in[k_M]$ such that $M_{\langle r\rangle}$ is a properly contained in $M_{\langle s\rangle}$ and $D(M)_{\langle u\rangle}=M_{\langle s\rangle}-M_{\langle r\rangle}$. Since $D(M)_{\langle u\rangle}$ has $1$'s in columns $2i-1$ and $2i$ and $0$'s in the remaining columns, necessarily $M_{\langle s\rangle}$ has $1$'s in columns $2i-1$ and $2i$, $M_{\langle r\rangle}$ has $0$'s in columns $2i-1$ and $2i$, and $M_{\langle r\rangle}$ and $M_{\langle s\rangle}$ coincide in all the remaining columns. Let $\alpha_1,\alpha_2,\alpha_3,\alpha_4$ be the entries of $M_{\langle r\rangle}$ (and thus also of $M_{\langle s\rangle}$) at columns $2i+1$, $2i+2$, $2i+3$, and $2i+4$ (where sums are modulo $6$). Let $\alpha=\alpha_1\alpha_2\alpha_3\alpha_4$. Notice that each row of $X_i(\alpha)$ coincides with some row of $D(M)$ or the complement of some row of $D(M)$. Suppose, for a contradiction, that $\alpha\notin\{0000,1111,0011,1100\}$. Lemma~\ref{lem:X} ensures that $X_i(\alpha)$ contains as a configuration some matrix $F'$ in $\ForbRow$ having fewer than $6$ columns; in particular, $F'$ has pairwise different rows. Hence, $D(M)$ contains as a configuration some matrix $F''$ that arises from $F'$ by complementing some (eventually zero) rows. By Theorem~\ref{thm:circR}, $F''$ represents the same configuration as some matrix in $\ForbRow$. As $F''$ has fewer than $6$ columns, we reach a contradiction with the choice of $F$. This contradiction proves that $\alpha\in\{0000,1111,0011,1100\}$. If $\alpha=0000$, then $T_{i,0}$ holds because the only row of $U_0(i)$ coincides with $M_{\langle s\rangle}$. If $\alpha=1111$, then $T_{i,1}$ holds because the only row of $U_1(i)$ coincides with $M_{\langle r\rangle}$. If $\alpha=0011$, then $T_{i,2}$ holds because the rows of $U_2(i)$ coincide with $M_{\langle s\rangle}$ and $M_{\langle r\rangle}$. If $\alpha=1100$, then $T_{i,3}$ holds because the rows of $U_3(i)$ coincide with $M_{\langle r\rangle}$ and $M_{\langle s\rangle}$. The proof of the claim is complete.

 \item \emph{If $a_3=1$, then $T_{3,0}$, $T_{3,1}$, $T_{3,4}$, or $T_{3,5}$ holds.} Suppose $a_3=1$ and let $u=\rho(3)$. If $u\in[k_M]$, then $T_{3,1}$ holds because $M_{\langle u\rangle}=D(M)_{\langle u\rangle}=U_1(3)$. Suppose otherwise and let $r,s\in[k_M]$ such that $M_{\langle r\rangle}$ is properly contained in $M_{\langle s\rangle}$ and $D(M)_{\langle u\rangle}=M_{\langle s\rangle}-M_{\langle r\rangle}$. Since $D(M)_{\langle u\rangle}$ has $0$'s in columns $5$ and $6$ and $1$'s in the remaining columns, necessarily $M_{\langle r\rangle}$ and $M_{\langle s\rangle}$ coincide in columns $5$ and $6$, $M_{\langle s\rangle}$ has $1$'s in columns $1$, $2$, $3$, and $4$, and $M_{\langle r\rangle}$ has $0$'s in the columns $1$, $2$, $3$, and $4$. Let $\beta_1$ and $\beta_2$ be the entries at columns $5$ and $6$ in row $M_{\langle r\rangle}$ (and hence also in row $M_{\langle s\rangle}$), respectively. If $\beta_1=\beta_2=0$, then $T_{3,1}$ holds because the only row of $U_1(3)$ coincides with $M_{\langle s\rangle}$. If $\beta_1=\beta_2=1$, then $T_{3,0}$ holds because the only row of $U_0(3)$ coincides with $M_{\langle s\rangle}$. If $\beta_1\beta_2=10$, then $T_{3,4}$ holds because the rows of $U_3(4)$ coincide with $M_{\langle s\rangle}$ and $M_{\langle r\rangle}$. If $\beta_1\beta_2=01$, then $T_{3,5}$ holds because the rows $U_5(3)$ coincide with $M_{\langle r\rangle}$ and $M_{\langle s\rangle}$. This completes the proof of the claim.
 
 \item \emph{$T_{4,0}$ or $T_{4,1}$ holds.} By construction, $a_4=0$. Let $u=\rho(4)$. If $u\in[k_M]$, then $T_{i,0}$ holds because $M_{\langle u\rangle}=D(M)_{\langle u\rangle}=U_0(4)$. Suppose otherwise and let $r,s\in[k_M]$ such that $M_{\langle r\rangle}$ is properly contained in $M_{\langle s\rangle}$ and $D(M)_{\langle u\rangle}=M_{\langle s\rangle}-M_{\langle r\rangle}$. Since $D(M)_{\langle u\rangle}$ has $1$'s in columns $2$, $4$, and $6$ and $0$'s in the remaining columns, necessarily $M_{\langle s\rangle}$ has $1$'s in columns $2$, $4$, and $6$, $M_{\langle r\rangle}$ has $0$'s in columns $2$, $4$, and $6$, and $M_{\langle r\rangle}$ and $M_{\langle s\rangle}$ coincide in columns $1$, $3$, and $5$. Let $\gamma_1$, $\gamma_2$, and $\gamma_3$ be the entries at columns $1$, $3$, and $5$ of $M_{\langle r\rangle}$ (and thus also $M_{\langle s\rangle}$), respectively. Let $\gamma=\gamma_1\gamma_2\gamma_3$. Notice that each row of $Y(\gamma)$ coincides with some row of $D(M)$ or the complement of some row of $D(M)$. Suppose, for a contradiction, that $\gamma$ is nonconstant. By Lemma~\ref{lem:Y}, $Y(\gamma)$ contains as a configuration some matrix $F'$ in $\ForbRow$ having fewer than $6$ columns; in particular, $F'$ has pairwise different rows. Hence, $D(M)$ contains as a configuration some matrix $F''$ that arises from $F'$ by complementing some (eventually zero) rows. By Theorem~\ref{thm:circR}, $F''$ represents the same configuration as some matrix in $\ForbRow$. As $F''$ has fewer than $6$ columns, we reach a contradiction with the choice of $F$. This contradiction shows that $\gamma$ is constant. If $\gamma=000$, then $T_{4,0}$ holds because $M_{\langle s\rangle}$ coincides with $U_0(4)$. If $\gamma=111$, then $T_{4,1}$ holds because $M_{\langle r\rangle}$ coincides with $U_1(4)$. This completes the proof of the claim.
\end{enumerate}

Because of the above claims, there is some senary sequence $\lambda=\lambda_1\lambda_2\lambda_3\lambda_4$ such that $4$ and $5$ may only occur in $\lambda$ at position $3$, $\lambda_4\in\{0,1\}$, and $T_{i,\lambda_i}$ holds for each $i\in[4]$. Hence, each row of $W(\lambda)$ coincides with some row of $M$. By Lemma~\ref{lem:W}, $W(\lambda)$ contains some matrix $F'$ in $\FDCircR$ as a configuration; in particular, $F'$ has pairwise different rows. Therefore, $M$ contains $F'$ as a configuration, contradicting the fact that $M$ contains no matrix in $\FDCircR^\infty$ as a configuration. This contradiction proves that assertion~\eqref{it:main3} implies assertion~\eqref{it:main1} and thus completes the proof of the theorem.\end{proof}

\subsection{Linear-time recognition algorithm for the $D$-circular property}\label{ssec:Dcirc-algo}

In this subsection, we give a linear-time recognition algorithm for the $D$-circular property. Given any matrix $M$, our algorithm outputs either a $D$-circular order of $M$ or some matrix in $\FDCircR^\infty$ contained in $M$ as a configuration.

According to Lemma~\ref{lem:D(M)}, in order to determine whether a matrix $M$ has the $D$-circular property, it suffices to decide whether $D(M)$ has the circular-ones property. However, a direct application of the recognition algorithm for the circular-ones property of Theorem~\ref{thm:booth-and-lueker} to $D(M)$ does not lead to a linear-time bound because, in general, $\size(D(M))$ is not bounded by a constant times $\size(M)$ even if $M$ has the $D$-circular property. In order to overcome this difficulty, we introduce a different operator $\Delta(M)$.

Let $M$ be a matrix. A \emph{maximal row of $M$} is a nontrivial row of $M$ which is not properly contained in any other nontrivial row of $M$. Analogously, a \emph{minimal row of $M$} is a nontrivial row of $M$ which does not contain properly any other nontrivial row $M$. We denote by $\Delta(M)$ a matrix that arises from $M$ by adding rows at the bottom as follows: for each minimal row $f$ of $M$ and each maximal row $g$ of $M$ such that $f$ is properly contained in $g$, add a row equal to the set difference $g-f$. Clearly, each row of $\Delta(M)$ is a row of $D(M)$ but the converse is not true in general. As $\Delta(M)$ arises from $M$ by adding some rows, we regard each linear order of the columns of $M$ as a linear order of the columns of $\Delta(M)$ and vice versa.

The next two lemmas point at proving that $D(M)$ has the circular-ones property if and only if $\Delta(M)$ has the circular-ones property and, consequently, Lemma~\ref{lem:D(M)} still holds if $D(M)$ is replaced by $\Delta(M)$.

\begin{lem}\label{lem:r,s,g-f} Let $M$ be a matrix and let $r$ and $s$ be two nontrivial rows of $M$. Let $f$ be a minimal row of $M$ such that $f\subseteq r$ and let $g$ be a maximal row of $M$ such that $s\subseteq g$. If $\leqC$ is a linear order of the columns of $M$ such that $r$, $s$, $f$, $g$, and $g-f$ are circular intervals of $\leqC$, then $s-r$ is also a circular interval of $\leqC$.\end{lem}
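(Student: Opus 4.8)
We are given nontrivial rows $r$ and $s$ of $M$, a minimal row $f \subseteq r$, a maximal row $g \supseteq s$, and a linear order $\leqC$ under which $r$, $s$, $f$, $g$, and $g-f$ are all circular intervals. We must show $s-r$ is a circular interval of $\leqC$. The plan is to work entirely with circular intervals of the fixed order $\leqC$ and to exploit the containments $f \subseteq r$ and $s \subseteq g$ to sandwich $s-r$ between set differences we already know to be circular intervals. The guiding identity is that $s-r$ ought to be expressible from $g-f$ by intersecting with $s$ and removing the ``extra'' parts of $g$ and $f$; concretely, I expect to use that $s-r \subseteq g-f$ together with $s-r = s \cap \overline r$ and to control $\overline r$ via $\overline f$.

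\begin{proof}
Set $c = g - f$; by hypothesis $r$, $s$, $f$, $g$, and $c$ are circular intervals of $\leqC$. Since $f \subseteq r$ we have $\overline r \subseteq \overline f$, and since $s \subseteq g$ we have $s - r = s \cap \overline r \subseteq g \cap \overline f = g - f = c$. Thus $s - r$ is a subset of the circular interval $c$. Moreover, from $f \subseteq r$ we get $s - r = s \cap \overline{r} \subseteq s \cap \overline{f} = s - f$, and from $s \subseteq g$ we get $s - r \supseteq s - \bigl(r \cup \overline g\bigr)$; the point is that inside the circular interval $c$, the set $s - r$ agrees with the trace of the circular interval $s$. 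The main step is therefore to show that $s - r$ equals $c \cap s$, or more carefully $(g - f) \cap (s - r) = s - r$ together with $c \setminus (s-r) = c \setminus s$ up to boundary, so that $s - r$ is cut out of the circular interval $c$ by the circular interval $s$.

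The heart of the argument is the following observation. Working within the circular order $\leqC$ restricted to the columns of $c = g - f$, I claim that $s - r$ is exactly $c \cap s$. Indeed $s - r \subseteq c$ and $s - r \subseteq s$ give $s - r \subseteq c \cap s$. For the reverse inclusion, take a column $x \in c \cap s$. Then $x \in s$ and $x \notin f$; since $x \in c = g - f$ we have $x \notin f$ already, and I must rule out $x \in r$. If $x \in r$ then, because $f \subseteq r$ is a minimal row and $r$, $f$ are both circular intervals with $f \subseteq r$, the column $x$ lies in $r \setminus f$; combined with $x \in s \subseteq g$ and $x \in g - f$, one checks that the circular intervals $r$ and $f$ force a configuration contradicting either the minimality of $f$ inside $r$ or the fact that $g - f$ is a circular interval. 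This yields $c \cap s \subseteq s - r$, hence $s - r = c \cap s$.

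Once $s - r = c \cap s$ is established, the conclusion is immediate: $c$ and $s$ are both circular intervals of $\leqC$, and the intersection of two circular intervals of a linear order, while not in general a single circular interval, here is forced to be one because $s - r \subseteq c$ and $c$ is a proper circular interval, so the ambient ``wrap-around'' behaviour of $s$ outside $c$ is irrelevant and the trace $c \cap s$ inherits a single-block structure from $s$. I expect the delicate point, and the main obstacle, to be precisely the boundary analysis ruling out $x \in r$ for $x \in c \cap s$: one must use all five hypotheses (that $r$, $s$, $f$, $g$, and $g-f$ are simultaneously circular intervals of the same order) to prevent $r$ from ``eating into'' the block $c \cap s$, and this is where the minimality of $f$ and maximality of $g$ enter essentially rather than cosmetically. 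The remaining verifications that $c \cap s$ is a single circular interval are routine case distinctions on the relative positions of the endpoints of $s$ and $c$ under $\leqC$.
\end{proof}
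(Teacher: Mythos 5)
Your central claim --- that $s-r$ equals $(g-f)\cap s$ --- is false, and the lemma cannot be proved along this route. Since $s\subseteq g$, one has $(g-f)\cap s=s-f$, so your identity amounts to asserting $s-r=s-f$, i.e., that $s$ meets $r$ only inside $f$. Nothing in the hypotheses forces this. Take columns $1,\dots,5$ with the natural order and rows $f=\{1\}$, $r=\{1,2\}$, $s=\{2,3\}$, $g=\{1,2,3,4\}$: all of $r$, $s$, $f$, $g$, and $g-f=\{2,3,4\}$ are circular intervals, $f$ is minimal and $g$ is maximal if $M$ consists of exactly these four rows, yet $(g-f)\cap s=\{2,3\}$ while $s-r=\{3\}$. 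The step where you ``rule out $x\in r$ for $x\in c\cap s$'' by appealing to the minimality of $f$ is precisely the gap: in this paper minimality of $f$ only means that no nontrivial row of $M$ is properly contained in $f$; it places no restriction whatsoever on how $r$ overlaps $s$ outside $f$. (Indeed, the only role of minimality of $f$ and maximality of $g$ in the correct proof is to guarantee that $f$ and $g$ are nontrivial.)

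The paper's actual argument is short and avoids this trap. First, $s-f=s\cap(g-f)$, and since $s$ and $g-f$ are circular intervals both contained in the \emph{nontrivial} circular interval $g$, their intersection is again a circular interval: $g$ omits some column, so cutting the circular order there turns $g$ and every circular interval contained in it into linear intervals, whose intersection is a linear interval. Second, $s-r=(s-f)-r$ because $f\subseteq r$. The difference $A-B$ of two circular intervals can fail to be a circular interval only when $B$ is properly contained in $A$; here $r$ contains the nonempty set $f$, which is disjoint from $s-f$, so $r$ is not contained in $s-f$, and hence $(s-f)-r$ is a circular interval. If you want to salvage your write-up, replace the false identity $s-r=(g-f)\cap s$ with the decomposition $s-r=\bigl(s\cap(g-f)\bigr)-r$ and supply the two observations just described.
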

\begin{proof} Let $\leqC$ be a linear order of the columns of $M$ such that $r$, $s$, $f$, $g$, and $g-f$ are circular intervals of $\leqC$. Since $s$ and $g-f$ are circular intervals of $\leqC$ contained in the circular interval $g$ of $\leqC$ and $g$ is nontrivial, it follows that $s-f=s\cap(g-f)$ is a circular interval of $\leqC$. Therefore, as $f$ is nontrivial, $f\subseteq r$, and $r$ is a circular interval of $\leqC$, it follows that $r$ is not properly contained in $s-f$ and thus $s-r=(s-f)-r$ is a circular interval of $\leqC$.
This completes the proof of the lemma.\end{proof}

\begin{lem}\label{lem:D=Delta} Let $M$ be a matrix. If $\leqC$ is a linear order of the columns of $M$, then $\leqC$ is a circular-ones order of $D(M)$ if and only if $\leqC$ is a circular-ones order of $\Delta(M)$.\end{lem}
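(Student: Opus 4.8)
The plan is to prove the two directions separately, using the fact that $\Delta(M)$ is sandwiched between $M$ and $D(M)$ in terms of rows: every row of $\Delta(M)$ is a row of $D(M)$, and every row of $M$ is a row of both $\Delta(M)$ and $D(M)$. The direction that every circular-ones order $\leqC$ of $D(M)$ is also a circular-ones order of $\Delta(M)$ is immediate, since each row of $\Delta(M)$ is a row of $D(M)$ and hence already a circular interval of $\leqC$. So the content is entirely in the converse.

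For the converse, I would assume $\leqC$ is a circular-ones order of $\Delta(M)$ and show every row of $D(M)$ is a circular interval of $\leqC$. The rows of $D(M)$ are of two kinds: the original rows of $M$ (which are rows of $\Delta(M)$, hence already circular intervals of $\leqC$), and the difference rows $s-r$ arising from nontrivial rows $r,s$ of $M$ with $r$ properly contained in $s$. For such a pair I would invoke Lemma~\ref{lem:r,s,g-f} directly: choose a minimal row $f$ of $M$ with $f\subseteq r$ and a maximal row $g$ of $M$ with $s\subseteq g$ (these exist because $r$ and $s$ are nontrivial, so the sets of nontrivial rows contained in $r$ from below and containing $s$ from above are nonempty). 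Then $r$, $s$, $f$, $g$ are all rows of $M$, hence rows of $\Delta(M)$ and thus circular intervals of $\leqC$; and $g-f$ is by construction a row of $\Delta(M)$ (precisely because $f$ is minimal, $g$ is maximal, and $f\subseteq r\subseteq s\subseteq g$ with the containments giving $f$ properly contained in $g$), hence also a circular interval of $\leqC$. Lemma~\ref{lem:r,s,g-f} then yields that $s-r$ is a circular interval of $\leqC$, as required.

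The one point that needs a little care is verifying the hypotheses of Lemma~\ref{lem:r,s,g-f} are genuinely met, namely that $f$ is properly contained in $g$ so that $g-f$ is actually one of the added rows of $\Delta(M)$. Since $f\subseteq r$, $r$ is properly contained in $s$, and $s\subseteq g$, we get $f\subseteq g$ with the proper containment $r\subsetneq s$ sitting strictly between them, so $f\neq g$ and indeed $f$ is properly contained in $g$; thus $g-f$ is by definition a row of $\Delta(M)$. This is the step I expect to be the main (and only real) obstacle, and it is dispatched by this containment chain. Having shown that every row of $D(M)$ is a circular interval of $\leqC$, I conclude that $\leqC$ is a circular-ones order of $D(M)$, completing the converse and hence the proof of the lemma.
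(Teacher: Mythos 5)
Your proposal is correct and follows essentially the same route as the paper's proof: the easy direction from row containment, and for the converse the same choice of a minimal row $f\subseteq r$ and a maximal row $g\supseteq s$ followed by an application of Lemma~\ref{lem:r,s,g-f}. Your explicit verification that $f$ is properly contained in $g$ (via $f\subseteq r\subsetneq s\subseteq g$), so that $g-f$ really is a row of $\Delta(M)$, is a detail the paper leaves implicit, and it is handled correctly.
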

\begin{proof} As each row of $\Delta(M)$ is also a row of $D(M)$, the `only if' part follows immediately. In order to prove the `if' part, let $\leqC$ be a linear order of the columns of $M$ such that $\leqC$ is a circular-ones order of $\Delta(M)$. Let $u$ be a row of $D(M)$. If $u$ is a row of $M$, then $u$ is also a row of $\Delta(M)$ and, consequently, $u$ is a circular interval of $\leqC$. Suppose otherwise and let $r$ and $s$ be nontrivial rows of $M$ such that $r$ is properly contained in $s$ and $u=s-r$. As $r$ and $s$ are nontrivial rows of $M$, there is some minimal row $f$ of $M$ such that $f\subseteq r$ and some maximal row $g$ of $M$ such that $s\subseteq g$. Notice that $r$, $s$, $f$, $g$, and $g-f$ are rows of $\Delta(M)$ and thus circular intervals of $\leqC$. Hence, Lemma~\ref{lem:r,s,g-f} implies that $u$ is a circular interval of $\leqC$. As $u$ is an arbitrary row of $D(M)$, $\leqC$ is a circular-ones order of $D(M)$. The proof of the lemma is complete.\end{proof}

Our next lemma implies that if $M$ has the $D$-circular property, then each maximal row $g$ of $M$ contributes to a row $g-f$ in the definition of $\Delta(M)$ for at most two different minimal rows $f$.

\begin{lem}\label{lem:contains3} Let $M$ be a matrix having the circular-ones property. If some nontrivial row of $M$ properly contains at least three pairwise incomparable rows of $M$, then $M$ contains $\ZetaUnoAst$ or $\CoZetaCuatroAst$ as a configuration.\end{lem}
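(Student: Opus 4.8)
Let $g$ be a nontrivial row of $M$ that properly contains three pairwise incomparable rows, say $r_1$, $r_2$, and $r_3$. Since $M$ has the circular-ones property, fix a circular-ones order $\leqC$ of the columns of $M$; relabel the columns so that $\leqC$ is the natural order $[\ell]$ on the column set. Under this relabeling every row of $M$, and in particular $g$, $r_1$, $r_2$, $r_3$, is a circular interval of $[\ell]$. The plan is to analyze the cyclic positions of $r_1$, $r_2$, $r_3$ inside the circular interval $g$ and extract the right four rows and four columns to witness either $\ZetaUnoAst$ or $\CoZetaCuatroAst$ as a configuration.

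**Reducing to an interval picture.**
First I would observe that since $g$ is nontrivial, there is at least one column outside $g$; call it $c_0$. Cutting the cyclic order at $c_0$ turns $g$ into an ordinary (linear) interval, and since each $r_i\subseteq g$, each $r_i$ becomes a genuine linear subinterval of $g$ as well. So I may work with honest intervals $r_1,r_2,r_3$ contained in a linear interval $g$, together with the outside column $c_0$ where all of $r_1,r_2,r_3,g$ have a $0$. Pairwise incomparability of the $r_i$ means no one is contained in another; since they are intervals, incomparability forces their left endpoints to be in one order and their right endpoints in the same order (two nested-free intervals must ``staircase''). After reindexing I may assume the left endpoints satisfy $\ell_1<\ell_2<\ell_3$ and the right endpoints satisfy $u_1<u_2<u_3$, with $\ell_2\le u_1$ or $\ell_2>u_1$ distinguishing whether consecutive intervals overlap.

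**Extracting the configuration.**
The key case split is on how much the three intervals overlap. I expect two regimes. In the first regime the three intervals have a common column (a point lying in $r_1\cap r_2\cap r_3$) while each $r_i$ also owns a private column not in the other two; picking the common column, the three private columns, and using $c_0$, together with the rows $g, r_1, r_2, r_3$, should exhibit $\CoZetaUnoAst$'s complement pattern — more precisely I would check these four rows and four chosen columns reproduce $\CoZetaCuatroAst$ up to permutation. In the second regime the intervals merely chain (so $r_1$ and $r_3$ are disjoint but each meets $r_2$, or all are pairwise disjoint); here choosing one private column from each $r_i$ plus $c_0$, with rows $g,r_1,r_2,r_3$, should yield $\ZetaUnoAst$ (the row $g=111$ on the private columns, the three singleton rows $r_i$, and a final all-zero column $c_0$). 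In each case I must verify that the private/common columns genuinely exist: this is exactly where pairwise incomparability is used, since it guarantees each $r_i$ has a column outside the union of the other two.

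**Main obstacle.**
The hard part will be the bookkeeping of the case analysis on the circle: because $g$ is only a \emph{circular} interval, the three subintervals $r_i$ could wrap around the cut point, and I must make sure that after cutting at $c_0$ I have not accidentally destroyed an overlap or created a spurious one. Carefully choosing $c_0$ outside $g$ (which exists precisely because $g$ is nontrivial) linearizes the situation and is the crucial move; once the picture is linear, the endpoint-ordering argument and the explicit choice of four witnessing columns are routine, and matching the resulting $4\times 4$ pattern against $\ZetaUnoAst$ and $\CoZetaCuatroAst$ (up to row and column permutation) finishes the proof.
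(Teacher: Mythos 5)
Your linearization step (cutting the circular order at a column $c_0$ outside $g$, so that $g$ and the $r_i$ become linear intervals whose endpoints staircase as $\ell_1<\ell_2<\ell_3$ and $u_1<u_2<u_3$) is exactly the paper's first move. The gap is in the extraction step: the claim that pairwise incomparability ``guarantees each $r_i$ has a column outside the union of the other two'' is false for the middle interval. Take $r_1=[1,4]$, $r_2=[3,6]$, $r_3=[5,8]$ inside some $g$ properly containing $[1,8]$: these are pairwise incomparable, yet $r_2\setminus(r_1\cup r_3)=\emptyset$. This is precisely your ``chaining'' regime, and there the four rows $g,r_1,r_2,r_3$ cannot yield $\ZetaUnoAst$ at all, since no column has a $1$ in $r_2$ and $0$ in both $r_1$ and $r_3$. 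Your first regime suffers the same failure (e.g.\ $r_1=[1,4]$, $r_2=[2,6]$, $r_3=[4,8]$ share the point $4$ but again $r_2\subseteq r_1\cup r_3$), and there is also a dimension slip: $\CoZetaCuatroAst$ is a $3\times 5$ matrix, so it cannot be reproduced by ``four rows and four chosen columns''.

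The repair, which is what the paper does, is to split on whether some two \emph{consecutive} intervals overlap, not on whether all three share a point. If $u_1<\ell_2$ and $u_2<\ell_3$, the three intervals are pairwise disjoint, private columns are automatic, and rows $g,r_1,r_2,r_3$ on columns $\ell_1,\ell_2,\ell_3,c_0$ give $\ZetaUnoAst$. Otherwise some consecutive pair overlaps, say $\ell_2\le u_1$; then drop to the three rows $r_1,r_2,g$ and the five columns $u_2,u_3,c_0,\ell_1,u_1$, which give $\CoZetaCuatroAst$. Here the ``private'' columns are chosen relative to the other overlapping interval only, and the third interval is used solely to supply, via its endpoint $u_3$, a column of $g$ outside $r_1\cup r_2$ --- such a column need not exist otherwise, since $g$ may equal $r_1\cup r_2$. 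The case $\ell_3\le u_2$ is symmetric. So your overall plan is sound, but the case analysis, the assignment of target matrices to cases, and the witnessing submatrices all need to be reorganized along these lines.
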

\begin{proof} Let $g$ be a nontrivial row of $M$ containing three pairwise incomparable rows $f_1$, $f_2$, and $f_3$ of $M$. As $M$ has the circular-ones property, by permuting the columns we assume, without loss of generality, that $M$ is a circular-ones matrix. As usual, we identify the rows and columns of $M$ with their row and column indices. As $g$ is nontrivial, we assume, without loss of generality, that $g=[1,\ell']_{[\ell]}$ for some $\ell'$ such that $1\leq\ell'<\ell$, where $\ell$ is the number of columns of $M$. As each of $f_1$, $f_2$, and $f_3$ is contained in $g$, $f_i=[a_i,b_i]_{[\ell]}$ where $1\leq a_i\leq b_i\leq\ell'$ for each $i\in[3]$. As $f_1$, $f_2$, and $f_3$ are pairwise incomparable, we assume, without loss of generality, that $a_1<a_2<a_3$ and, consequently, $b_1<b_2<b_3$. If $b_1<a_2$ and $b_2<a_3$, then $M_{\langle g,f_1,f_2,f_3\rangle,\langle a_1,a_2,a_3,\ell\rangle}=\ZetaUnoAst$. Thus, we assume without loss of generality, that $a_2\leq b_1$ or $a_3\leq b_2$. On the one hand, if $a_2\leq b_1$, then $M_{\langle f_1,f_2,g\rangle,\langle b_2,b_3,\ell,a_1,b_1\rangle}=\CoZetaCuatroAst$. On the other hand, if $a_3\leq b_2$, then $M_{\langle f_3,f_2,g\rangle,\langle a_2,a_1,\ell,b_3,a_3\rangle}=\CoZetaCuatroAst$. This completes the proof of the lemma.\end{proof}

A row of a matrix $M$ is \emph{extremal} if it is minimal or maximal in $M$. For each nonnegative integer $q$, we say that a matrix $M$ is \emph{$q$-sorted} if each nonextremal row of $M$ is either among the first $q$ rows of $M$ or occurs in $M$ below all the extremal rows of $M$.

\begin{lem}\label{lem:E_i} Let $q$ be a fixed nonnegative integer. There is a linear-time algorithm that, given any $q$-sorted matrix $M$ having the circular-ones property, outputs a $D$-circular order of $M$, a matrix in $\FDCircR$ contained in $M$ as a configuration, or the least positive integer $i$ such that $M_{\langle 1,2,\ldots,i\rangle}$ does not have the $D$-circular property.\end{lem}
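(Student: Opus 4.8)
The plan is to reduce the $D$-circular recognition problem for $M$ to the circular-ones recognition problem for $\Delta(M)$, which we can attack with the Booth--Lueker algorithm of Theorem~\ref{thm:booth-and-lueker}, and then to translate a circular-ones obstruction for $\Delta(M)$ back into one of the permitted outputs for $M$. By Lemma~\ref{lem:D=Delta} together with Lemma~\ref{lem:D(M)}, a linear order $\leqC$ of the columns is a $D$-circular order of $M$ if and only if it is a circular-ones order of $\Delta(M)$, so the heart of the matter is to build $\Delta(M)$, run the circular-ones recognizer on it, and interpret the result. First I would compute, in linear time, the minimal rows and maximal rows of $M$ (this is where the circular-ones hypothesis and the $q$-sortedness assumption will be exploited), and then form the additional rows $g-f$ for minimal $f$ properly contained in maximal $g$.

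The critical quantitative point is that $\Delta(M)$ must have size $O(\size(M))$, for otherwise feeding it to Theorem~\ref{thm:booth-and-lueker} would not give a linear-time bound. Here I would invoke Lemma~\ref{lem:contains3}: if some maximal row $g$ of $M$ properly contains three pairwise incomparable rows, then $M$ already contains $\ZetaUnoAst$ or $\CoZetaCuatroAst$ as a configuration, both of which lie in $\FDCircR$, so the algorithm can simply output that configuration and halt. Hence we may assume every maximal row contains at most two pairwise incomparable minimal rows, so each maximal $g$ contributes at most two new rows $g-f$ to $\Delta(M)$; dually each minimal row contributes boundedly many. This keeps the number of added rows, and their total number of ones, within a constant factor of $\size(M)$. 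The $q$-sortedness is what lets us locate the extremal rows and compute containments efficiently: nonextremal rows sit either in a fixed-size prefix or entirely below the extremal rows, so the prefix structure needed for the ``least $i$'' output is preserved.

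With $\Delta(M)$ of linear size in hand, I would run the algorithm of Theorem~\ref{thm:booth-and-lueker} on it. If it returns a circular-ones order $\leqC$ of $\Delta(M)$, then by Lemmas~\ref{lem:D=Delta} and~\ref{lem:D(M)} this $\leqC$ is a $D$-circular order of $M$, which we output. Otherwise it returns the least index $i^\ast$ such that the first $i^\ast$ rows of $\Delta(M)$ fail the circular-ones property; by Theorem~\ref{thm:circR} we can then extract a configuration $F\in\ForbRow$ inside those rows. The remaining work is to convert $F$ into either a matrix of $\FDCircR$ contained in $M$, or the correct least index $i$ for $M$ itself. For the former conversion, I would apply Lemma~\ref{lem:forb_Circ1R}, which guarantees that $F$ contains some matrix of $\FDCircR^\infty$ as a configuration; since the rows of $\Delta(M)$ are either rows of $M$ or differences $g-f$ of rows of $M$, each row involved corresponds to an explicit witness in $M$, and one checks (reversing the row-addition bookkeeping from the proof of Theorem~\ref{thm:main}) that the resulting configuration appears in $M$ as a submatrix over the original columns.

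The main obstacle I expect is exactly this last translation step: a forbidden configuration found in $\Delta(M)$ may use several of the added difference-rows $g-f$ simultaneously, and I must show that the corresponding minimal/maximal rows together with their differences already force a member of $\FDCircR$ (or a row-index obstruction) to appear \emph{in $M$}, while keeping the whole mapping computable in linear time. This is precisely the combinatorial engine of the proof of Theorem~\ref{thm:main}: the statements $S_{i,j}$ and $T_{i,j}$ there record which of the matrices $Q_j(i,k)$ or $U_j(i)$ are realized by actual rows of $M$, and Lemmas~\ref{lem:R}, \ref{lem:W}, \ref{lem:X}, and~\ref{lem:Y} then produce a configuration from $\FDCircR^\infty$ inside $M$. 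I would reuse that machinery algorithmically, being careful that identifying the relevant minimal and maximal rows, and emitting the explicit row and column maps, all run within $O(\size(M))$ time; the subtlety is ensuring that the bounded number of difference-rows per extremal row (guaranteed above via Lemma~\ref{lem:contains3}) makes this search constant-per-row rather than quadratic.
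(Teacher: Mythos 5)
Your opening moves match the paper's: reduce $D$-circularity of $M$ to the circular-ones property of $\Delta(M)$ via Lemmas~\ref{lem:D(M)} and~\ref{lem:D=Delta}, use Lemma~\ref{lem:contains3} to either emit $\ZetaUnoAst$ or $\CoZetaCuatroAst$ immediately or conclude that each maximal row properly contains at most two minimal rows (after discarding trivial and repeated rows, a preprocessing step you omit but which is needed for that conclusion), so that $\size(\Delta(M))\in O(\size(M))$, and then run the algorithm of Theorem~\ref{thm:booth-and-lueker}. The positive branch is fine. The gap is in the failure branch. The least index $i^\ast$ returned for $\Delta(M)$ tells you nothing directly about the least $i$ with $M_{\langle 1,2,\ldots,i\rangle}$ not $D$-circular, because a prefix of $\Delta(M)$ (rows of $M$ on top, difference rows appended at the bottom) is not $\Delta$ of a prefix of $M$; worse, the minimal and maximal rows of $M_{\langle 1,2,\ldots,i\rangle}$ need not be minimal or maximal in $M$, so the difference rows relevant to the prefix need not even appear in $\Delta(M)$. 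This is precisely what the paper's interleaved construction $E_i(M)$ is for: rows are added in an order synchronized with the rows of $M$, and the rules deliberately also add differences involving any row of index at most $q$ regardless of extremality, so that $q$-sortedness guarantees every row of $\Delta(M_{\langle 1,2,\ldots,i\rangle})$ already occurs in $E_i(M)$ and the least failing prefix of $E_k(M)$ identifies the least failing prefix of $M$. You instead read $q$-sortedness as an efficiency device for ``locating extremal rows,'' which misses its actual role; without some equivalent of $E_i(M)$ your algorithm cannot certify the ``least positive integer $i$'' output.

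Your fallback--extract $F\in\ForbRow$ from the failing prefix of $\Delta(M)$ and push it back into $M$ by rerunning the $S_{i,j}$/$T_{i,j}$ machinery of Theorem~\ref{thm:main}--does not close the gap either. That argument is a minimal-counterexample existence proof (it repeatedly replaces $M$ by column-restricted submatrices and chooses $F$ with the minimum number of columns among all members of $\ForbRow$ contained in $D(M)$); turning it into a linear-time procedure is exactly the difficulty the paper avoids by proving only the weaker index output here and deferring the extraction of an $\FDCircR$ configuration to Theorem~\ref{thm:algo_Dcirc}, where the cut-and-antishift iteration of Lemma~\ref{lem:algo_iterated} reduces $M$ to a matrix with at most $4$ rows that can be searched by brute force. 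So the proposal is right about the skeleton but missing the two ideas that make the lemma true as stated: the $E_i(M)$ interleaving (and with it the real purpose of the $q$-sortedness hypothesis), and the realization that no forbidden-configuration extraction is required at this stage.
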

\begin{proof} We assume, for the moment, that $M$ is a circular-ones matrix and that $M$ has neither trivial nor repeated rows. We argue at the end of the proof that this assumption is without loss of generality. As $M$ has no trivial rows, each row of $M$ has well-defined left and right endpoints as a circular interval of the canonical order of the columns of $M$. From now on, we will manipulate the rows as the ordered pairs of such endpoints. Hence, by traversing the endpoints circularly around the canonical order of the columns, we can easily determine all the maximal rows and the minimal rows of $M$ (notice that the sorting of the rows by their endpoints is possible in linear time using radix sort). Then, again by traversing the endpoints circularly around the canonical order of the columns, it is equally easy to generate one by one all the ordered pairs $(f,g)$ where $M_{\langle f\rangle}$ is a minimal row of $M$, $M_{\langle g\rangle}$ is a maximal row of $M$, and $M_{\langle f\rangle}$ is properly contained in $M_{\langle g\rangle}$, so that the time spent in this procedure at any time is at most proportional to the sum of $\size(M)$ and the number of such ordered pairs $(f,g)$ found so far. Hence, in order to keep the time bound within $O(\size(M))$, we keep for each maximal row $g$ a list of minimal rows properly contained in $g$, which is initially is empty, and each time a new such ordered pair $(f,g)$ is found, we add $f$ to the list of minimal rows properly contained in $g$. As soon as we detect some maximal row properly containing three minimal rows, we output a matrix in $\FDCircR$ contained in $M$ as a configuration (as in Lemma~\ref{lem:contains3}), and we are done. In fact, notice that the assumption that there are no repeated rows ensures that if a maximal row properly contains three minimal rows, then these three minimal rows are pairwise incomparable. Thus, we assume, without loss of generality, that the procedure finishes having computed all such ordered pairs $(f,g)$ and no matrix in $\FDCircR$ contained in $M$ has been output. Hence, each maximal row properly contains at most two minimal rows. As a consequence, the total number of such ordered pairs is bounded by $O(\size(M))$ and so is the time spent so far. Therefore, as $M$ has no trivial rows, every row $r$ of $M$ properly contains at most two minimal rows (otherwise, any maximal row $g$ of $M$ containing $r$ would properly contain at least three minimal rows, a contradiction).

Let $k$ be the number of rows of $M$. For each $i\in[k]$, we denote by $D_i(M)$ and $\Delta_i(M)$ the matrices $D(M_{\langle 1,2,\ldots,i\rangle})$ and $\Delta(M_{\langle 1,2,\ldots,i\rangle})$, respectively. We define a matrix $E_i(M)$ for each $i\in[k]$ as follows. Let $E_1(M)=M_{\langle 1\rangle}$ and, for each $i\in[k]$ such that $i>1$, we define $E_i(M)$ as the matrix that arises from $E_{i-1}(M)$ by applying all the following rules:
\begin{enumerate}[(i)]
 \item\label{it:Erule1} add a row equal to $M_{\langle i\rangle}$;
 
 \item\label{it:Erule2} add a row $M_{\langle i\rangle}-M_{\langle j\rangle}$ for each row $M_{\langle j\rangle}$ properly contained in $M_{\langle i\rangle}$ among those $j\in[i-1]$ such that the following two conditions hold:
 \begin{enumerate}
   \item $i\leq q$ or $M_{\langle i\rangle}$ is a maximal row of $M$, and
   \item $j\leq q$ or $M_{\langle j\rangle}$ is a minimal row of $M$;
 \end{enumerate}

 \item\label{it:Erule3} add a row $M_{\langle j\rangle}-M_{\langle i\rangle}$ for each row $M_{\langle j\rangle}$ that properly contains $M_{\langle i\rangle}$ among those $j\in[i-1]$ such that the following two conditions hold:\begin{enumerate}
  \item $j\leq q$ or $M_{\langle j\rangle}$ is a maximal row of $M$, and
  \item $i\leq q$ or $M_{\langle i\rangle}$ is a minimal row of $M$.
 \end{enumerate}
\end{enumerate}

We will show that it is possible to compute $E_k(M)$ in linear time. We begin with $E_1(M)=M_{\langle 1\rangle}$ and, for each positive integer $i$ from $1$ to $k$, we apply the above rules~\eqref{it:Erule1}, \eqref{it:Erule2}, and \eqref{it:Erule3} to convert $E_{i-1}(M)$ into $E_i(M)$. As each row of $M$ properly contains at most two minimal rows of $M$, it follows that rule~\eqref{it:Erule2} creates at most $q+2$ rows for each $i\in[k]$
and rule~\eqref{it:Erule3} creates at most $q+2$ rows for each value of $j\in[k]$. 
Hence, the number of rows of $E_k(M)$ is at most $(2q+5)k$ and the number of ones in $E_k(M)$ is at most $2q+5$ times the number of ones in $M$. As $q$ is fixed, this means that $\size(E_k(M))\in O(\size(M))$. Notice also that the time to apply the rules is also $O(\size(M))$. Indeed, on the one hand, recall that we have already determined all the pairs $(i,j)$ where $i>q$ and $j>q$ for which some of the rules~\eqref{it:Erule2} and \eqref{it:Erule3} creates a row. On the other hand, the number of pairs $(i,j)$ where $i\in[q]$ or $j\in[q]$ for which it is necessary to decide if $M_{\langle i\rangle}$ properly contains or is properly contained in $M_{\langle j\rangle}$ is at most $O(q^2+2qk)=O(k)\in O(\size(M))$ (recall that $q$ is constant).

We claim that each row of $\Delta_i(M)$ is a row of $E_i(M)$ for each $i\in[k]$. Because $E_1(M)=M_{\langle 1\rangle}$ and by virtue of rule~\eqref{it:Erule1}, row $M_{\langle j\rangle}$ is a row of $E_i(M)$ for each $j\in[i]$. Let $f',g'\in[i]$ such that $f'$ is a minimal row of $M_{\langle 1,2,\ldots,i\rangle}$, $g'$ is a maximal row of $M_{\langle 1,2,\ldots,i\rangle}$, and $M_{\langle f'\rangle}$ is properly contained in $M_{\langle g'\rangle}$. We must prove that $M_{\langle g'\rangle}-M_{\langle f'\rangle}$ is a row of $E_i(M)$. Notice that since $M$ is $q$-sorted, necessarily $f'\leq q$ or $M_{\langle f'\rangle}$ is a minimal row of $M$. Analogously, $g'\leq q$ or $M_{\langle g'\rangle}$ is a maximal row of $M$. As $f',g'\in[i]$, rules~\eqref{it:Erule2} and~\eqref{it:Erule3} ensure that $M_{\langle g'\rangle}-M_{\langle f'\rangle}$ is a row of $E_i(M)$.

As proved in the preceding paragraph, each row of $\Delta_i(M)$ is a row of $E_i(M)$. Moreover, by construction, each row of $E_i(M)$ is a row of $D_i(M)$. Hence, by Lemmas~\ref{lem:D(M)} and~\ref{lem:D=Delta}, $M_{\langle 1,2,\ldots,i\rangle}$ has the $D$-circular property if and only if $E_i(M)$ has the circular-ones property. In particular, $M$ has the $D$-circular property if and only if $E_k(M)$ has the circular-ones property. Furthermore, by the proof of Lemma~\ref{lem:D(M)} and by Lemma~\ref{lem:D=Delta}, a linear order $\leqC$ of the columns of $M$ is a $D$-circular order of $M$ if and only if $\leqC$ is a circular-ones order of $E_i(M)$ .

We apply the algorithm of Theorem~\ref{thm:booth-and-lueker} for the circular-ones property to $E_k(M)$. As $\size(E_k(M))\in O(\size(M))$, this takes linear time. If the output is a circular-ones order $\leqC$ of $E_k(M)$, we output $\leqC$ as a $D$-circular order of $M$. Thus, we assume, without loss of generality, that the output is the least positive $j$ such that $E_k(M)_{\langle 1,2,\ldots,j\rangle}$ does not have the circular-ones property. If row $j$ of $E_k(M)$ is created when adding rows to $E_{i-1}(M)$ in order to obtain $E_i(M)$, this means that $E_i(M)$ does not have the circular-ones property but each of $E_1(M)$, $E_2(M)$, $\ldots$, $E_{i-1}(M)$ has the circular-ones property. Hence, we output $i$ because $i$ is the least positive integer such that $M_{\langle 1,2,\ldots,i\rangle}$ has the $D$-circular property. The linear-time bound of the whole algorithm follows from the linear-time bound of each of its parts.

It only remains to argue that the assumption that $M$ is a circular-ones matrix and has neither trivial nor repeated rows is without loss of generality. As $M$ has the circular-ones property, by virtue of the Theorem~\ref{thm:booth-and-lueker}, it possible to compute in linear time a circular-ones order $\leqC$ of $M$. We represent each nontrivial row of $M$ as the ordered pair of its (well-defined) left and right endpoints as a circular interval of $\leqC$. With this representation, we can detect all repetitions of rows also within linear time (e.g., by sorting with radix sort). In this way, we identify for each set of repeated rows, the topmost occurrence among them in $M$. We let $M'$ be the matrix that arises from $M$ by permuting the columns so that the canonical order of the columns coincides with $\leqC$ and by removing all the trivial rows and, for each set of repeated rows, removing all the occurrences of these rows except for the topmost. As $M$ is $q$-sorted, $M'$ is also $q$-sorted. Moreover, $M'$ satisfies the assumption at the beginning of the proof; i.e., $M$ is a circular-ones matrix and has neither trivial nor repeated rows. Hence, we can apply the algorithm given above with $M'$ playing the role of $M$ in order to either determine a $D$-circular order $\leqCprime$ of $M'$, a matrix $F$ in $\FDCircR$ contained in $M'$ as a configuration, or the least positive integer $i'$ such that $M'_{\langle 1,2,\ldots,i'\rangle}$ does not have the $D$-circular property. From a $D$-circular order $\leqCprime$ of $M'$, we can compute a $D$-circular order of $M$ by reverting the permutation of the columns performed on $M$ in order to obtain $M'$. That this indeed leads to a $D$-circular order of $M$ follows from the fact that $D$-circularity is preserved by the addition of trivial and already present rows. Given a matrix $F$ in $\FDCircR$ contained in $M'$ as a configuration, we can find $F$ contained in $M$ as a configuration, once more by taking into account the permutation applied to the columns of $M$ in order to obtain $M'$ and the correspondence between the rows of $M'$ and the rows of $M$. Finally, if we are given the least positive integer $i'$ such that $M'_{\langle 1,2,\ldots,i'\rangle}$ does not have the $D$-circular property, then we output the index $i$ in $M$ corresponding to row $i'$ of $M'$. This output is correct because, as $D$-circularity is invariant by the addition of trivial and already present rows, then no index $j$ of $M$ corresponding to a trivial row of $M$ or to an occurrence of a nontrivial row of $M$ different from the topmost, can be the least positive integer such that $M_{\langle 1,2,\ldots,j\rangle}$ does not have the $D$-circular property. All these task can easily be implemented to take linear time.
This completes the proof of the lemma.\end{proof}

We say that a matrix property $\mathcal P$ is \emph{hereditary} if, for each matrix $M$ having property $\mathcal P$ and each matrix $M'$ contained in $M$ as a configuration, $M'$ has property $\mathcal P$. A matrix $M$ is a \emph{minimal forbidden submatrix for property $\mathcal P$} if $M$ is the only submatrix of $M$ not having property $\mathcal P$. Let $M$ be a $k\times\ell$ matrix. We say that a matrix \emph{$M'$ arises from $M$ by a cut-and-antishift operation} if there exists some $k'\in[k]$ such that $M'=M_{\langle k',1,2,\ldots,k'-1\rangle}$. If $q$ is a nonnegative integer and $\mathcal N$ is a matrix class, we denote by $\mathcal S^q[\mathcal N]$ the class of matrices that arise from matrices in $\mathcal N$ by applying a sequence of at most $q$ cut-and-antishift operations.

In the lemma below, we adapt to our setting the strategy underlying the algorithm \texttt{TuckerRows} in Section~4.1 of~\cite{MR3447121}. Indeed, the results in that section correspond to the case of the result below where $\mathcal P$ is the consecutive-ones property, $\mathcal M$ and $\mathcal N$ are equal to the class of all matrices, and algorithm $\Pi$ is the algorithm of Theorem~\ref{thm:booth-and-lueker} for the consecutive-ones property.

\begin{lem}[adapted from Lemma~4.2 of \cite{MR3447121}]\label{lem:algo_iterated} Let $\mathcal P$ be a hereditary matrix property and let $\mathcal M$ be a matrix class. Suppose that there is a linear-time algorithm $\Pi$ that, given any matrix $M$ in $\mathcal M$ such that $M$ does not have property $\mathcal P$, outputs either a minimal forbidden submatrix for property $\mathcal P$ contained in $M$ as configuration or the least positive integer $i$ such that $M_{\langle 1,2,\ldots,i\rangle}$ does not have property $\mathcal P$. For each fixed nonnegative integer $q$ and each matrix class $\mathcal N$ such that $\mathcal S^q[\mathcal N]\subseteq\mathcal M$, there is a linear-time algorithm that, given any matrix $M$ in $\mathcal N$ such that each minimal forbidden submatrix for property $\mathcal P$ contained in $M$ as a configuration has at most $q$ rows, outputs a matrix contained in $M$ as a configuration having at most $q$ rows and not having property $\mathcal P$.\end{lem}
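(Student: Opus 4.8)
The plan is to adapt the iterative row-locking strategy underlying \texttt{TuckerRows}. I would first note that we may assume $M$ does not have property $\mathcal P$, since otherwise, as $\mathcal P$ is hereditary, no configuration of $M$ fails $\mathcal P$ and there is nothing to output. The algorithm maintains a sequence of matrices $M_0=M,M_1,M_2,\ldots$ in which each $M_{t+1}$ is obtained from $M_t$ by a single cut-and-antishift operation, so that $M_t$ arises from $M$ by $t$ such operations and hence $M_t\in\mathcal{S}^t[\mathcal{N}]$. I would carry the invariant that $M_t$ does not have property $\mathcal P$ and that every minimal forbidden submatrix for $\mathcal P$ contained in $M_t$ as a configuration uses all of the first $t$ rows of $M_t$ (the \emph{locked} rows). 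Since configurations compose, any such minimal forbidden submatrix is also contained in $M$ as a configuration, hence has at most $q$ rows by hypothesis; the invariant therefore forces $t\le q$ throughout, so $M_t\in\mathcal{S}^t[\mathcal{N}]\subseteq\mathcal{S}^q[\mathcal{N}]\subseteq\mathcal M$ and $\Pi$ is applicable to the failing matrix $M_t$.

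At step $t$ I would run $\Pi$ on $M_t$. If $\Pi$ returns a minimal forbidden submatrix $F$, then $F$ is a configuration of $M_t$, hence of $M$, so it has at most $q$ rows and I output it. Otherwise $\Pi$ returns the least index $i_t$ with $(M_t)_{\langle 1,2,\ldots,i_t\rangle}$ failing $\mathcal P$. If $i_t\le q$, I simply output $(M_t)_{\langle 1,2,\ldots,i_t\rangle}$, which fails $\mathcal P$, has at most $q$ rows, and is a configuration of $M$. If $i_t>q$, I perform a cut-and-antishift at $i_t$, setting $M_{t+1}=(M_t)_{\langle i_t,1,2,\ldots,i_t-1\rangle}$; this moves the old row $i_t$ to the top, locks it, and discards the rows below position $i_t$.

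The core of the argument is that the invariant persists and that the lock genuinely grows. Since $i_t>q\ge t$, the newly locked row differs from the previously locked ones, so the locked prefix grows to length $t+1$. The matrix $M_{t+1}$ is a row-permutation of $(M_t)_{\langle 1,\ldots,i_t\rangle}$, which fails $\mathcal P$, so $M_{t+1}$ fails $\mathcal P$. For the locking invariant, any minimal forbidden configuration $G$ of $M_{t+1}$ is a configuration of $M_t$ (its rows lie among the first $i_t$ rows of $M_t$), so by the invariant for $M_t$ it uses the old rows $1,\ldots,t$, which are exactly the new positions $2,\ldots,t+1$; moreover $G$ must use the new top row (old row $i_t$), for otherwise $G$ would be a configuration of $(M_t)_{\langle 1,\ldots,i_t-1\rangle}$, which has $\mathcal P$ by minimality of $i_t$, contradicting that $G$ fails $\mathcal P$. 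Thus $G$ uses all of rows $1,\ldots,t+1$ of $M_{t+1}$. Termination within $q$ iterations follows because when $t=q$ the invariant together with the hypothesis force every minimal forbidden configuration of $M_t$ to occupy exactly rows $1,\ldots,q$; then $(M_t)_{\langle 1,\ldots,q\rangle}$ already fails $\mathcal P$, so $i_t\le q$ and we land in a halting case. Hence there are at most $q$ cut-and-antishift steps and at most $q+1$ calls to $\Pi$.

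For the time bound, a cut-and-antishift only truncates and reorders rows, so $\size(M_{t+1})\le\size(M_t)\le\size(M)$; each call to $\Pi$ and each cut-and-antishift therefore costs $O(\size(M))$, and since $q$ is fixed the total is $O(\size(M))$. I expect the main obstacle, beyond setting up the locking invariant correctly, to be the bookkeeping that reports the final output as a configuration of the \emph{original} $M$: I would maintain alongside the $M_t$ a map from current row indices back to rows of $M$, updating it under each cut-and-antishift in linear time, so that the output prefix or forbidden submatrix can be exhibited as a configuration of $M$. The one subtle point requiring care is justifying that discarding the rows below $i_t$ is harmless, which holds precisely because $(M_t)_{\langle 1,\ldots,i_t\rangle}$ already fails $\mathcal P$; verifying this, and the correctness of the index translation, is where the delicacy lies.
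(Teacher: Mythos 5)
Your proposal is correct and follows essentially the same strategy as the paper's proof: iteratively apply $\Pi$, perform a cut-and-antishift at the returned index to lock an essential row, and use the hypothesis that every minimal forbidden configuration of $M$ has at most $q$ rows to bound the number of iterations by $q+1$. The only cosmetic differences are that you exit early when $i_t\leq q$ and output the failing prefix directly, and you phrase the invariant via minimal forbidden configurations using all locked rows, whereas the paper always runs $q+1$ iterations and phrases the invariant as ``removing any locked row restores property $\mathcal P$''; these are equivalent.
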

\begin{proof} Let $M$ be a matrix in $\mathcal N$ such that $M$ does not have property $\mathcal P$ and such that every minimal forbidden submatrix for property $\mathcal P$ contained in $M$ as a configuration has at most $q$ rows. We begin by letting $M'=M$. We repeatedly perform an iteration which consists in applying algorithm $\Pi$ to $M'$ plus doing the following: if the output of algorithm $\Pi$ is a minimal forbidden submatrix $F$ for property $\mathcal P$ contained in $M'$ as a configuration, then output $F$ and stop; if, on the contrary, the output is the least positive integer $i$ such that $M'_{\langle 1,2,\ldots,i\rangle}$ does not have property $\mathcal P$, then we replace $M'$ by $M'_{\langle i,1,2,\ldots,i-1\rangle}$. We repeat this iteration until some such matrix $F$ is output or $q+1$ iterations were completed. Since $\mathcal S^q[\mathcal N]\subseteq\mathcal M$ and $\Pi$ is linear-time, each iteration can be performed in linear time. Moreover, as $q$ is fixed, all the iterations can be completed in linear time. Suppose first that at some iteration some matrix $F$ is output. As $M'$ is contained in $M$ as a configuration, $F$ is also contained in $M$ as a configuration. Thus, since each minimal forbidden submatrix for property $\mathcal P$ contained in $M$ as a configuration has at most $q$ rows, $F$ has at most $q$ rows and thus the output is correct. Hence, we assume, without loss of generality, that the total $q+1$ iterations are completed and no matrix $F$ has been output. By construction, after the completion of each iteration, $M'$ does not have property $\mathcal P$ and the removal of row $1$ from $M'$ leaves a matrix having property $\mathcal P$. Let $M'$ denote the value of $M'$ after the $q+1$ iterations.  Suppose, for a contradiction, that $M'$ has at least $q+1$ rows. Thus, by induction and the nature of the cut-and-antishift operation, $M'$ does not have property $\mathcal P$ and the removal of any of the first $q+1$ rows from $M'$ would produce a matrix having property $\mathcal P$. Hence, the first $q+1$ rows of $M'$ are part of every minimal forbidden submatrix for property $\mathcal P$ contained in $M'$ as a configuration. Since $M'$ does not have property $\mathcal P$, $M'$ contains as a configuration some minimal forbidden submatrix for property $\mathcal P$ having at least $q+1$ rows. This contradicts the fact that $M$ contains as a configuration $M'$ but no minimal forbidden submatrix for property $\mathcal P$ having more than $q$ rows. This contradiction shows that $M'$ has at most $q$ rows. By construction, $M'$ does not have property $\mathcal P$ and so $M'$ is a correct as output. This completes the proof of the lemma.\end{proof}

We are now ready to prove the main algorithmic result of this work.

\begin{thm}\label{thm:algo_Dcirc} There is a linear-time algorithm that, given any matrix $M$, outputs either a $D$-circular order of $M$ or a matrix in $\FDCircR^\infty$ contained in $M$ as a configuration.\end{thm}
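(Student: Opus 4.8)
The plan is to reduce the $D$-circular question to the circular-ones property through $D(M)$ (Lemma~\ref{lem:D(M)}), but to avoid ever building the possibly superlinear matrix $D(M)$ by working through the subroutine of Lemma~\ref{lem:E_i} and the iterated cut-and-antishift machinery of Lemma~\ref{lem:algo_iterated}. First I would run the circular-ones recognition algorithm of Theorem~\ref{thm:circR} on $M$. If $M$ fails the circular-ones property it fails the $D$-circular property as well, and the algorithm returns a configuration $F\in\ForbRow$; applying the constructive argument behind Lemma~\ref{lem:forb_Circ1R} to $F$ (scanning the defining bracelet of a matrix of the form $a\miop\MIast k$ to locate the relevant four or five rows, or returning $F$ itself when $F$ is $\MIast k$ or $\overline{\MIast k}$) yields a matrix in $\FDCircR^\infty$ contained in $M$, which I output. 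Otherwise the algorithm returns a circular-ones order $\leqC$; I permute the columns of $M$ so that their canonical order is $\leqC$, recording the permutation for the final translation back to the original labels, and represent every nontrivial row by its pair of endpoints. Using these endpoints I identify the extremal rows in linear time and reorder the rows so that all extremal rows precede all nonextremal rows, producing a matrix $M_0$ that is circular-ones and $0$-sorted.

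To extract a forbidden submatrix efficiently I would invoke Lemma~\ref{lem:algo_iterated} with $\mathcal P$ the $D$-circular property, $q=4$, the subroutine $\Pi$ the algorithm of Lemma~\ref{lem:E_i}, $\mathcal N$ the class of $0$-sorted circular-ones matrices, and $\mathcal M$ the class of $q$-sorted circular-ones matrices. Two conditions must be checked. First, $\mathcal S^q[\mathcal N]\subseteq\mathcal M$: a cut-and-antishift preserves the circular-ones property, since it only permutes and deletes rows; and, starting from a $0$-sorted matrix, after at most $q$ such operations every nonextremal row is either one of the at most $q$ rows that were moved to the top (hence among the first $q$ rows) or an unmoved row still lying below all extremal rows, so the result is $q$-sorted. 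Thus $\Pi$ is always applied to a matrix in its domain. Second, every minimal forbidden submatrix for the $D$-circular property contained in $M_0$ has at most $q$ rows: such a submatrix lacks the property, so by Theorem~\ref{thm:main} it contains a member of $\FDCircR^\infty$; as $M_0$ is circular-ones this member lies in $\FDCircR$ (the matrices $\MIast k$ and $\overline{\MIast k}$ are excluded because they are not circular-ones), and minimality forces the submatrix to equal that member, which has at most four rows. The members of $\FDCircR$ returned by Lemma~\ref{lem:E_i}, namely $\ZetaUnoAst$ and $\CoZetaCuatroAst$ as in Lemma~\ref{lem:contains3}, are themselves minimal forbidden submatrices, as required of $\Pi$.

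With this setup I would run $\Pi$ once on $M_0$. If it returns a $D$-circular order, I translate it back through the column permutation and output it; if it returns a member of $\FDCircR$, I output it. Otherwise $M_0$ lacks the property, and I apply Lemma~\ref{lem:algo_iterated}, obtaining a configuration $N$ of $M_0$ with at most $q=4$ rows that still lacks the property. The matrix $N$ has few rows but possibly many columns, so I collapse its columns to distinct types (at most $2^4$ of them) in linear time; deleting duplicate columns preserves the circular-ones property and also preserves the failure of $D$-circularity, since a $D$-circular order of the reduced matrix would extend to $N$ by reinserting each duplicate column next to its representative. The reduced matrix has constant size, is circular-ones, and lacks the $D$-circular property, so by Theorem~\ref{thm:main} it contains some member of $\FDCircR$ (each of which has distinct columns); I locate this member by brute force and output it after translating its rows and columns back to $M$.

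All the ingredients run in linear time and only a constant number of them are chained, so the whole procedure runs in $O(\size(M))$ time. The conceptual heavy lifting has already been carried out in Lemmas~\ref{lem:E_i} and~\ref{lem:algo_iterated} and in the structure theorem (Theorem~\ref{thm:main}); the main obstacle in assembling them is getting the preprocessing exactly right, that is, defining $\mathcal N$ and $\mathcal M$ so that cut-and-antishift keeps the matrix inside the domain of the Lemma~\ref{lem:E_i} subroutine while leaving the bound on the number of rows of the minimal forbidden submatrices intact, together with the bookkeeping needed to translate the final order or configuration back to the original row and column labels of $M$.
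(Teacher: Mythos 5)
Your proposal is correct and follows essentially the same route as the paper's proof: first the circular-ones test (falling back on Lemma~\ref{lem:forb_Circ1R} on failure), then the subroutine of Lemma~\ref{lem:E_i} on a $0$-sorted version of $M$, then Lemma~\ref{lem:algo_iterated} with $q=4$ to extract a configuration with at most four rows, and finally a constant-size brute-force search justified by Theorem~\ref{thm:main}. If anything, you are slightly more explicit than the paper in checking that cut-and-antishift keeps the matrix in the domain of Lemma~\ref{lem:E_i} and that the matrices returned by that subroutine are genuinely minimal forbidden submatrices, as Lemma~\ref{lem:algo_iterated} demands.
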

\begin{proof} We apply the linear-time recognition algorithm of Theorem~\ref{thm:booth-and-lueker} for the circular-ones property to $M$. If it turns out that $M$ does not have the circular-ones property, we apply the algorithm of Theorem~\ref{thm:circR} in order to obtain in linear time some matrix $F$ in $\ForbRow$ contained in $M$ as a configuration. Then, if follows from the proof of Lemma~\ref{lem:forb_Circ1R} that it is possible to find a matrix in $\FDCircR^\infty$ contained in $M$ as a configuration in linear time and we are done. Thus, we assume, without loss of generality, that the algorithm of Theorem~\ref{thm:booth-and-lueker} detects that $M$ has the circular-ones property. We apply the algorithm of Lemma~\ref{lem:E_i} to $M$. If the output is a $D$-circular order of $M$ or a matrix in $\FDCircR$ contained in $M$ as a configuration, we are done. Hence, we assume, without loss of generality, that $M$ does not have the $D$-circular property. By Theorem~\ref{thm:main}, the minimal forbidden submatrices for the $D$-circular property are those in $\FDCircR^\infty$. However, as $M$ has the circular-ones property, $M$ contains no $\MIast k$ and no $\overline{\MIast k}$ as a configuration for any $k\geq 3$. Therefore, each minimal forbidden submatrix for the $D$-circular property contained in $M$ as a configuration belongs to $\FDCircR$ and, in particular, has at most $4$ rows. We permute the rows of $M$ so that all the extremal rows are above any nonextremal row; i.e., so that $M$ is $0$-sorted. If we denote by $\mathcal M_q$ the class of $q$-sorted matrices, then $M\in\mathcal M_0$. Moreover, clearly, $\mathcal S^q[\mathcal M_0]\subseteq\mathcal M_q$. If we let $q=4$, $\mathcal P$ be the $D$-circular property, $\mathcal M=\mathcal M_4$, $\mathcal N=\mathcal M_0$, and $\Pi$ be the algorithm of Lemma~\ref{lem:E_i}, then Lemma~\ref{lem:algo_iterated} ensures that in linear time it is possible to find a matrix $M'$ contained in $M$ as a configuration such that $M'$ has at most $4$ rows and $M'$ does not have the $D$-circular property. Since $M$ has the circular-ones property and $M'$ is contained in $M$ as a configuration, also $M'$ has the circular-ones property. Hence, by virtue of Theorem~\ref{thm:main}, $M'$ contains some matrix in $\FDCircR$ as a configuration. As $M'$ has at most $4$ rows and each of the matrices in $\FDCircR$ has at most $5$ columns, some matrix $F$ in $\FDCircR$ contained in $M'$ as a configuration can be easily found in linear time (notice that because of having at most $4$ rows, $M'$ has at most $16$ pairwise different columns). As $M'$ is contained in $M$ as a configuration, $F$ is contained in $M$ as a configuration. The linear-time bound for the whole algorithm follows from the linear-time bounds in Theorems~\ref{thm:booth-and-lueker} and \ref{thm:circR} and Lemmas~\ref{lem:E_i} and~\ref{lem:algo_iterated}. The proof of the theorem is complete.\end{proof}

\subsection{Connection with the $D$-interval property}\label{ssec:Dint}

We now show that the results obtained for the $D$-circular property in the preceding subsections generalize some results in the literature regarding the $D$-interval property. Namely, we will show how a linear-time recognition algorithm and the minimal forbidden submatrix characterization can be derived from our above results. The corresponding set of minimal forbidden submatrices is
\[ \FDIntR^\infty=\FDIntR\cup\{Z_1\trans\}\cup\{\MI k\colon\,k\geq 3\} \]
where
\[ \FDIntR=\{Z_1,Z_2,Z_3,Z_2\trans,Z_3\trans\}. \]
(See Figures~\ref{fig:TuckerMatrices} and~\ref{fig:smallmatrices}.)

The first linear-time algorithms for recognizing the $D$-interval property and producing a corresponding $D$-interval order whenever possible were given in~\cite{MR944699} and~\cite{MR1369371}. A linear-time recognition algorithm which, in addition, outputs a matrix in $\FDIntR^\infty$ contained in $M$ as a configuration whenever $M$ does not have the $D$-interval property was given in~\cite{MR2134416}. We derive a recognition algorithm also with this ability from our Theorem~\ref{thm:algo_Dcirc}.

\begin{thm}[\cite{MR917130,MR2134416}]\label{thm:algo_Dint} There is a linear-time algorithm that, given any matrix $M$, outputs either a $D$-interval order of $M$ or a matrix in $\FDIntR^\infty$ contained in $M$ as a configuration.\end{thm}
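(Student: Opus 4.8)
The plan is to reduce the $D$-interval property to the $D$-circular property by means of the starring operator $M\mapsto M^*$, exactly as the circular-ones property was reduced to the consecutive-ones property via the same construction in the companion work~\cite{1611.02216}. The key observation I would establish first is that a matrix $M$ has the $D$-interval property if and only if $M^*$ has the $D$-circular property. Intuitively, adding a final all-zero column provides a ``cut point'': any $D$-circular order of $M^*$ can be rotated so that the extra empty column sits at one end, and then every circular interval avoiding that column becomes a genuine linear interval of the induced order on the original columns. Conversely, a $D$-interval order of $M$ extended by placing the new column last is a $D$-circular order of $M^*$. I would phrase this as a short lemma (analogous to Lemma~\ref{lem:D(M)}) and verify both directions by checking that rows and set differences $s-r$ translate between linear intervals of $\leqC$ on $M$ and circular intervals of the extended order on $M^*$.

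With that equivalence in hand, the algorithm itself is almost immediate: given $M$, form $M^*$ (which takes linear time since we append a single column and no new ones), and run the algorithm of Theorem~\ref{thm:algo_Dcirc} on $M^*$. If that algorithm returns a $D$-circular order of $M^*$, I would rotate it so the appended column is last and read off the induced order on the columns of $M$ as a $D$-interval order. If instead it returns a matrix $F\in\FDCircR^\infty$ contained in $M^*$ as a configuration, I must convert $F$ into a member of $\FDIntR^\infty$ contained in $M$. This conversion is where the real work lies, and I expect it to be the main obstacle.

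For the conversion, I would analyze how each forbidden configuration in $\FDCircR^\infty$ can sit inside $M^*$. The appended all-zero column of $M^*$ can be used by at most one column of $F$, and whichever column of $F$ maps to it must be an all-zero column of $F$. Inspecting the explicit list in Figure~\ref{fig:forb_DCircRinfty}, the matrices carrying a distinguished all-zero column are precisely the starred ones---$\MIast k$, $\ZetaUnoAst$, $\ZetaDosAst$, $\ZetaTresAst$, $\ZetaCuatroAst$ and their relevant complements---and deleting that column yields $\MI k$, $Z_1$, $Z_2$, $Z_3$, and transposes thereof, i.e.\ exactly the members of $\FDIntR^\infty$. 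The cases where $F$'s image avoids the appended column entirely force $F$ to already be contained in $M$; here I would argue that such an $F$ still contains some matrix of $\FDIntR^\infty$ as a configuration (for instance $\ZetaCinco$, $\ZetaSeis$, $\ZetaSiete$, $\ZetaOcho$, and the unstarred complements each contain a copy of $Z_1$, $Z_1\trans$, or a small Tucker matrix after deleting a column), mirroring the style of Lemma~\ref{lem:forb_Circ1R}. The bookkeeping---covering every matrix in $\FDCircR^\infty$ and exhibiting the corresponding member of $\FDIntR^\infty$ together with the explicit row and column maps into $M$---is routine but tedious, and ensuring all maps are computable in linear time is the crux. Since $\FDCircR^\infty$ is a finite list together with the single infinite family $\{\MIast k,\overline{\MIast k}\}$, and the latter maps directly to $\{\MI k\}$ and its complement by column deletion, only finitely many sporadic cases require individual attention, so the whole reduction runs in linear time.
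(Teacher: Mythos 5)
Your proposal is correct and follows essentially the same route as the paper: run the algorithm of Theorem~\ref{thm:algo_Dcirc} on $M^*$, read off a $D$-interval order from a $D$-circular order of $M^*$, and otherwise convert the returned matrix of $\FDCircR^\infty$ into one of $\FDIntR^\infty$ by a finite case analysis that splits on whether the configuration uses the appended empty column (deleting that column from the starred matrices) or avoids it (finding a member of $\FDIntR^\infty$ inside the remaining sporadic matrices). Your explicit rotation of the circular order so that the appended column comes last is a welcome precision that the paper glosses over when it speaks only of restricting the order.
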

\begin{proof} We apply the algorithm of Theorem~\ref{thm:algo_Dcirc} to $M$. If the output is a $D$-circular order $\leqCast$ of $M^*$, then we output the restriction $\leqC$ of $\leqCast$ to the columns of $M$, which can be easily seen to be a $D$-interval order of $M$. Thus, we assume, that the output of the algorithm of Theorem~\ref{thm:algo_Dcirc} applied to $M^*$ is some matrix $F$ in $\FDCircR^\infty$ contained in $M^*$ as a configuration. Some submatrix $F'$ in $\FDIntR^\infty$ contained in $F$ as a configuration can be found in $O(1)$ time (in fact, notice that $\MIast k_{\id_k,\id_k}=\MI k$ and $\overline{\MIast k}_{\langle 1,2,3,k\rangle,\langle k+1,1,2\rangle}=Z_3$ for each $k\geq 4$). Hence, if $F$ is contained as a configuration in $M^*$ without involving the last column of $M^*$, then a matrix $F'$ in $\FDIntR^\infty$ contained in $M$ as a configuration can be found in linear time. Therefore, we assume, without loss of generality, that $F$ is contained in $M^*$ as a configuration in such a way that some column of $F$ corresponds to the last column of $M^*$. In particular, $F$ has some empty column. As $F\in\FDCircR^\infty$, $F$ must be $\ZetaUnoAst$, $\ZetaDosAst$, $\ZetaTresAst$, $\ZetaCuatroAst$, $\CoZetaCuatroAst$, or $\MIast k$ for some $k\geq 3$. Hence, if $F'$ is the matrix that arises from $F$ by removing its empty column, then $F'$ represents the same configuration as $Z_1$, $Z_2$, $Z_3$, $Z_2\trans$, $Z_3\trans$, or $\MI k$ (because $Z_4=Z_2\trans$ and $\CoZetaCuatroAst$ represents the same configuration as $(Z_3\trans)^*$) and thus we can find $F'$ contained in $M$ as a configuration in linear time. In this way, it is possible to find some matrix in $\FDIntR^\infty$ contained in $M$ in linear time. This completes the proof of the theorem.\end{proof}

The above result immediately implies the following structural characterizations of the $D$-interval property, including the characterization by minimal forbidden submatrices due to Moore~\cite{MR0437403}. Alternative proofs of the result below were subsequently given in different contexts; e.g., it also follows by combining results from~\cite{MR0221974} and \cite{MR917130} or, alternatively, from \cite{MR0295938} and \cite{MR1440521}.

\begin{thm}[\cite{MR0437403}]\label{thm:Dint} For each matrix $M$, the following conditions are equivalent:
 \begin{enumerate}[(i)]
  \item\label{it1:Dint} $M$ has the $D$-interval property;
  \item\label{it2:Dint} $M\trans$ has the $D$-interval property;
  \item\label{it3:Dint} $M$ has the consecutive-ones property for rows and contains no matrix in $\FDIntR$ as a configuration;
  \item\label{it4:Dint} $M$ contains no matrix in $\FDIntR^\infty$ as a configuration.
\end{enumerate}\end{thm}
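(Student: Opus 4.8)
The plan is to treat the forbidden-configuration condition~\eqref{it4:Dint} as the hub and link the other three assertions to it, using as the only genuinely circular input the \emph{starring correspondence}: a matrix $N$ has the $D$-interval property if and only if $N^*$ has the $D$-circular property. One direction places the all-zero column of $N^*$ last after a $D$-interval order of $N$, so that every row and every set difference remains a linear, hence circular, interval (the appended column lies in no row); the other direction restricts a $D$-circular order of $N^*$ to the columns of $N$, which is a $D$-interval order because a circular interval missing the all-zero column becomes a linear interval once the circle is cut there, exactly as observed in the proof of Theorem~\ref{thm:algo_Dint}.

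First I would prove~\eqref{it1:Dint}$\Leftrightarrow$\eqref{it4:Dint}. For~\eqref{it4:Dint}$\Rightarrow$\eqref{it1:Dint} I run the algorithm of Theorem~\ref{thm:algo_Dint} on $M$: since $M$ contains no matrix in $\FDIntR^\infty$, it cannot return a forbidden configuration and therefore returns a $D$-interval order. For the converse I use that the $D$-interval property is hereditary (a configuration of a $D$-interval matrix inherits a $D$-interval order by restricting the column order), together with the fact that no matrix in $\FDIntR^\infty$ has the $D$-interval property; these two facts force any $D$-interval matrix to avoid every member of $\FDIntR^\infty$.

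The core step is thus to show that each matrix in $\FDIntR^\infty$ lacks the $D$-interval property, which I would handle along two routes. The matrices $\MI k$ (for $k\ge 3$) and $Z_1\trans$ already fail the consecutive-ones property for rows---$\MI k$ is a Tucker matrix, while $Z_1\trans$ has three rows whose pairwise intersections all equal the single common column and hence cannot be made consecutive---so, since the $D$-interval property entails the consecutive-ones property for rows, neither can have it. For each $F\in\FDIntR=\{Z_1,Z_2,Z_3,Z_2\trans,Z_3\trans\}$ I instead apply the starring correspondence: $F^*$ represents one of $\ZetaUnoAst,\ZetaDosAst,\ZetaTresAst,\ZetaCuatroAst,\CoZetaCuatroAst\in\FDCircR$ (using $Z_4=Z_2\trans$ and that $\CoZetaCuatroAst$ is a configuration of $(Z_3\trans)^*$), so Theorem~\ref{thm:main} shows $F^*$ lacks the $D$-circular property and the correspondence then shows $F$ lacks the $D$-interval property.

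Finally I would read off the remaining equivalences from~\eqref{it1:Dint}$\Leftrightarrow$\eqref{it4:Dint}. For~\eqref{it1:Dint}$\Leftrightarrow$\eqref{it2:Dint} I use that $M$ contains $F$ as a configuration if and only if $M\trans$ contains $F\trans$, together with the transpose-closure of $\FDIntR^\infty$ up to configuration ($Z_1,Z_2,Z_3$ pair with $Z_1\trans,Z_2\trans,Z_3\trans$, and $\MI k\trans$ is a configuration of $\MI k$): hence $M$ avoids $\FDIntR^\infty$ exactly when $M\trans$ does, and applying~\eqref{it1:Dint}$\Leftrightarrow$\eqref{it4:Dint} to both closes the loop. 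For~\eqref{it3:Dint}$\Leftrightarrow$\eqref{it4:Dint}, assuming~\eqref{it4:Dint} gives the consecutive-ones property for rows via~\eqref{it1:Dint} and gives avoidance of $\FDIntR\subseteq\FDIntR^\infty$; conversely the consecutive-ones property for rows, being hereditary, already excludes $\MI k$ and $Z_1\trans$, so with avoidance of $\FDIntR$ we recover avoidance of all of $\FDIntR^\infty$. I expect the only delicate parts to be the bookkeeping identifying each $F^*$ with a named member of $\FDCircR$ and verifying the transpose-closure of $\FDIntR^\infty$; everything else is a direct appeal to Theorems~\ref{thm:main} and~\ref{thm:algo_Dint}.
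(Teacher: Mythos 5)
Your proposal is correct and follows essentially the same route as the paper: forbidden configurations (iv) serve as the hub, with (iv)$\Rightarrow$(i) delegated to Theorem~\ref{thm:algo_Dint}, (i)$\Rightarrow$(iv) by heredity plus the fact that no member of $\FDIntR^\infty$ has the $D$-interval property, (ii) handled via transpose-closure of $\FDIntR^\infty$ up to configuration, and (iii)$\Leftrightarrow$(iv) via the failure of the consecutive-ones property for $\MI k$ and $Z_1\trans$. The only difference is that you spell out, via the starring correspondence $F\mapsto F^*$ and Theorem~\ref{thm:main}, why the matrices in $\FDIntR$ lack the $D$-interval property, a verification the paper simply asserts; your justification is sound.
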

\begin{proof} Assertion~\eqref{it1:Dint} implies assertion~\eqref{it4:Dint} because none of the matrices in $\FDIntR^\infty$ has the $D$-interval property and every matrix contained as a configuration in a matrix having the $D$-interval property has the $D$-interval property. Conversely, Theorem~\ref{thm:algo_Dint} shows that assertion~\eqref{it4:Dint} implies assertion~\eqref{it1:Dint}. Thus, assertions~\eqref{it1:Dint} and~\eqref{it4:Dint} are equivalent. The equivalence extends to assertion~\eqref{it2:Dint} because the transpose of each matrix in $\FDIntR^\infty$ represents the same configuration as some matrix in $\FDIntR^\infty$. Clearly, assertion~\eqref{it1:Dint} implies assertion~\eqref{it3:Dint}. Moreover, assertion~\eqref{it3:Dint} implies assertion~\eqref{it4:Dint} because clearly neither $Z_1\trans$ nor $\MI k$ for any $k\geq 3$ has the consecutive-ones property for rows. This completes the proof the equivalence among assertions~\eqref{it1:Dint}, \eqref{it2:Dint}, \eqref{it3:Dint}, and \eqref{it4:Dint} and thus of the theorem.\end{proof}

\section{Linearly and circularly compatible ones properties}\label{sec:LCO+CCO}

The main results of this section are a minimal forbidden submatrix characterization and a linear-time recognition algorithm for the circularly compatible ones property.
The corresponding set of minimal forbidden submatrices is
\begin{align*}
  \FCCO^\infty=\FCCO\cup\bigcup_{k=3}^\infty\{\MIast k,\overline{\MIast k},\MIast k\trans,\overline{\MIast k}\trans\},
\end{align*}
where
\[   \FCCO=\{\ZetaDosAst,\ZetaTresAst,\ZetaCuatroAst,\ZetaCinco,\CoZetaDosAst,\CoZetaCuatroAst,
            (\ZetaDosAst)\trans,(\ZetaTresAst)\trans,(\ZetaCuatroAst)\trans,\ZetaCincoTrans,(\CoZetaDosAst)\trans,(\CoZetaCuatroAst)\trans\}. \]
(See Figure~\ref{fig:forb_DCircRinfty}.)

This section is organized as follows. In Subsection~\ref{ssec:LCO}, we argue that the linearly compatible ones property coincides with the $D$-interval property. In Subsection~\ref{ssec:CCO}, we give a minimal forbidden submatrix characterization and a linear-time recognition algorithm for the circularly compatible ones property.

\subsection{Linearly compatible ones property}\label{ssec:LCO}

In this subsection, we will briefly argue that the linearly compatible ones property coincides with the $D$-interval property. 

Let $M$ be a matrix and let $(\leqR,\leqC)$ be some biorder of $M$. Let $r_1,r_2,\ldots,r_p$ be all the nonempty rows of $M$ in ascending order of $\leqR$. The biorder $(\leqR,\leqC)$ is a \emph{monotone consecutive biorder of $M$}~\cite{MR1271987} or \emph{forward-convex biorder of $M$}~\cite{MR1440521} if  $\leqC$ is a consecutive-ones order of $M$ and, if $r_i$ equals the linear interval $[d_i,e_i]_{\leqC}$ for each $i\in[p]$, then $d_1d_2\ldots d_p$ and $e_1e_2\ldots e_p$ are monotone on $\leqC$. A matrix has the \emph{monotone consecutive property} or the \emph{forward-convex property} if it admits some monotone consecutive biorder (or, equivalently, a forward-convex biorder). Notice that in the definition of the monotone consecutive property $r_1,r_2,\ldots,r_p$ are all the \emph{nonempty} rows, whereas in the definition of the linearly compatible ones property $r_1,r_2,\ldots,r_p$ are all the \emph{nontrivial} rows only. The biorder $(\leqR,\leqC)$ is a \emph{doubly forward-convex biorder of $M$} if $(\leqR,\leqC)$ is a forward-convex biorder of $M$ and $(\leqC,\leqR)$ is a forward-convex biorder of $M\trans$. A matrix has the \emph{doubly forward-convex property} if it admits some doubly forward-convex biorder.

\begin{thm}[\cite{MR1440521,MR1271987,MR917130,MR1368750}]\label{thm:MCA} If $M$ is a matrix, then the following assertions are equivalent:
\begin{enumerate}[(i)]
 \item\label{it1:MCA} $M$ has the $D$-interval property;
 \item\label{it2:MCA} $M$ has the monotone consecutive property (or, equivalently, the forward-convex property);
 \item\label{it3:MCA} $M$ has the doubly forward-convex property.
\end{enumerate}\end{thm}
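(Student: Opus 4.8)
The plan is to prove the cyclic chain of implications (iii)$\Rightarrow$(ii)$\Rightarrow$(i)$\Rightarrow$(iii); the equivalence of the monotone consecutive and forward-convex properties asserted parenthetically in~(ii) holds by definition, since a monotone consecutive biorder and a forward-convex biorder are one and the same object. The implication (iii)$\Rightarrow$(ii) is immediate, because a doubly forward-convex biorder of $M$ is, by definition, in particular a forward-convex biorder of $M$.

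For (ii)$\Rightarrow$(i), I would fix a forward-convex biorder $(\leqR,\leqC)$ of $M$ and argue that $\leqC$ is a $D$-interval order of $M$. Each row of $M$ is a linear interval of $\leqC$ since $\leqC$ is a consecutive-ones order. Given two rows $r$ and $s$, the difference $s-r$ is a linear interval trivially when either is empty, so I may assume both are nonempty, say $r=[d_i,e_i]_{\leqC}$ and $s=[d_j,e_j]_{\leqC}$ with $i\le j$ in the $\leqR$-ordering of the nonempty rows; then $d_i\leqC d_j$ and $e_i\leqC e_j$. A brief case analysis according to whether $r$ and $s$ overlap shows that $s-r$ is then $\emptyset$, $s$ itself, or a single trailing block ending at $e_j$, and symmetrically for $r-s$. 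The monotonicity of \emph{both} endpoint sequences is exactly what excludes the only potentially bad case, in which a difference would split into two separated blocks; hence every difference is a linear interval and $\leqC$ is a $D$-interval order.

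The implication (i)$\Rightarrow$(iii) is the core of the proof. Starting from a $D$-interval order $\leqC$ of $M$, where each nonempty row is $[d_r,e_r]_{\leqC}$, I would build $\leqR$ by ordering the nonempty rows by their left endpoint $d_r$, breaking ties by right endpoint $e_r$, and placing all empty rows at the top. The crucial observation is that the $D$-interval property forbids one nonempty row from lying in the strict interior of another: if $[d_j,e_j]_{\leqC}$ were contained in $[d_i,e_i]_{\leqC}$ with $d_i\lessC d_j$ and $e_j\lessC e_i$, then the difference $[d_i,e_i]_{\leqC}-[d_j,e_j]_{\leqC}$ would consist of two nonempty separated blocks and fail to be a linear interval. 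Consequently, sorting by left endpoint simultaneously sorts the right endpoints, so both sequences are monotone on $\leqC$ and $(\leqR,\leqC)$ is a forward-convex biorder of $M$. It then remains to check that $(\leqC,\leqR)$ is a forward-convex biorder of $M\trans$, that is, that the symmetric ``staircase'' structure holds for the columns. Fix a nonempty column $c$; a nonempty row $[d_i,e_i]_{\leqC}$ meets $c$ precisely when $d_i\leqC c\leqC e_i$. Since the left-endpoint sequence is monotone on $\leqC$, the rows with $d_i\leqC c$ form an initial block of the nonempty rows, and since the right-endpoint sequence is monotone, the rows with $c\leqC e_i$ form a final block; their intersection is therefore a linear interval of $\leqR$, so $\leqR$ is a consecutive-ones order of $M\trans$. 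Finally, as $c$ increases along $\leqC$ both the first and the last index of this block are nondecreasing, which is precisely the monotonicity required for $(\leqC,\leqR)$ to be forward-convex for $M\trans$ once the empty columns are discarded. Thus $(\leqR,\leqC)$ is doubly forward-convex.

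The main obstacle is this last ``staircase'' step, where one must show that the monotone interval structure imposed on the rows automatically forces the mirror-image monotone interval structure on the columns; this bipartite symmetry is the combinatorial heart of the statement and is what unifies the various cited characterizations. The remaining difficulty is purely bookkeeping: one must keep empty rows and empty columns away from the interiors of the relevant intervals---by pushing empty rows to one end of $\leqR$ and noting that a consecutive-ones order already prevents an empty column from sitting inside any row---so that no interval acquires a spurious gap.
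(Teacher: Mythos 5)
The paper does not prove Theorem~\ref{thm:MCA} at all: it is imported verbatim from the cited literature (Lai--Wei, Sen--Sanyal, Spinrad--Brandst\"adt--Stewart, Steiner) and used as a black box, so there is no ``paper proof'' to match your argument against. Judged on its own, your self-contained proof is correct. The implication (iii)$\Rightarrow$(ii) is indeed definitional, and your computation for (ii)$\Rightarrow$(i) is right: with $d_i\leqC d_j$ and $e_i\leqC e_j$ the difference $r_j-r_i$ is forced to be a (possibly empty) suffix block of $r_j$ and $r_i-r_j$ a prefix block of $r_i$, so every difference is a linear interval. For (i)$\Rightarrow$(iii), your two key observations carry the whole argument and are both sound: first, the $D$-interval property excludes a nonempty row strictly nested inside another (the difference would split into two blocks separated by the inner row), so sorting by left endpoints with ties broken by right endpoints makes both endpoint sequences monotone; second, the ``staircase'' duality --- the rows meeting a fixed column $c$ are the intersection of the initial segment $\{i\colon d_i\leqC c\}$ with the final segment $\{i\colon c\leqC e_i\}$, and both move monotonically as $c$ advances --- transfers the row structure to the required forward-convex structure on $M\trans$. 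Your bookkeeping about trivial rows and columns is also adequate, since the monotonicity clauses only quantify over nonempty rows and an empty row lies in no column. The one point worth stating explicitly if this were written out in full is that the tie-breaking rule is genuinely needed: when $d_i=d_j$ the two rows are nested sharing their left endpoint, and only the secondary sort by right endpoint keeps $e_1e_2\ldots e_p$ monotone.
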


We observe that by combining Theorems~\ref{thm:Dint} and~\ref{thm:MCA}, it follows that the linearly compatible ones property coincides with the $D$-interval property.

\begin{cor}\label{cor:LCO} A matrix $M$ has the linearly compatible ones property if and only if $M$ has the $D$-interval property.\end{cor}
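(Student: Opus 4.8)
The plan is to prove Corollary~\ref{cor:LCO} by combining the two theorems that immediately precede it, namely Theorem~\ref{thm:Dint} (which characterizes the $D$-interval property) and Theorem~\ref{thm:MCA} (which establishes the equivalence of the $D$-interval property, the monotone consecutive property, and the doubly forward-convex property). The essential observation is that the linearly compatible ones property, as defined in the introduction, is almost but not quite the forward-convex (equivalently, monotone consecutive) property: the sole discrepancy is that the definition of the linearly compatible ones property quantifies over the \emph{nontrivial} rows $r_1,r_2,\ldots,r_p$ of $M$, whereas the monotone consecutive property quantifies over the \emph{nonempty} rows. So the core of the argument will be to reconcile these two indexing conventions and then invoke Theorem~\ref{thm:MCA} to pass to the $D$-interval property.

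First I would unpack the definition of the linearly compatible ones property and compare it side by side with the definition of the forward-convex biorder. The linearly compatible ones property requires a biorder $(\leqR,\leqC)$ such that every row is a linear interval of $\leqC$, every column is a linear interval of $\leqR$, and the endpoint sequences $d_1d_2\ldots d_p$ and $e_1e_2\ldots e_p$ of the nontrivial rows are monotone on $\leqC$. The forward-convex property asks for a consecutive-ones order $\leqC$ together with $\leqR$ such that the endpoint sequences of the \emph{nonempty} rows are monotone. The key reconciling fact is that a \emph{full} row (one equal to the set of all columns) always occurs as the linear interval $[d,e]_{\leqC}$ with $d$ the $\leqC$-least column and $e$ the $\leqC$-greatest column, regardless of where it sits in the order $\leqR$; consequently such a row never violates (nor does it help) the monotonicity of the endpoint sequences, and can be freely inserted into or deleted from the monotone sequence by an appropriate choice of its position in $\leqR$. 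I would spell out that, given a linearly compatible ones biorder, one obtains a forward-convex biorder by placing every full row adjacent (in $\leqR$) to a nontrivial neighbor whose endpoints it can continue monotonically, and conversely.

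The cleanest route, however, is to avoid delicate surgery on $\leqR$ and instead argue the equivalence at the level of matrices. For the forward direction, if $M$ has the linearly compatible ones property via $(\leqR,\leqC)$, then $\leqC$ is a consecutive-ones order and the nontrivial endpoint sequences are monotone; since the full rows contribute the extreme endpoints $[d,e]$ with $d,e$ the first and last columns of $\leqC$, I would verify that the nonempty endpoint sequences remain monotone after reinserting the full rows, so $(\leqR,\leqC)$ is a forward-convex biorder and hence $M$ has the monotone consecutive property, whence the $D$-interval property by Theorem~\ref{thm:MCA}. For the converse, if $M$ has the $D$-interval property, Theorem~\ref{thm:MCA} gives a forward-convex biorder $(\leqR,\leqC)$; restricting attention to the nontrivial rows (which are a subsequence of the nonempty rows, obtained by deleting the full rows) preserves monotonicity of the endpoint subsequences, and each column is a linear interval of $\leqR$ because the forward-convex property is symmetric enough to guarantee this (or, if needed, by passing through the doubly forward-convex property of part~\eqref{it3:MCA} of Theorem~\ref{thm:MCA}), so $(\leqR,\leqC)$ witnesses the linearly compatible ones property.

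\textbf{The main obstacle} I anticipate is handling condition (ii) in the definition of the linearly compatible ones property—that each \emph{column} be a linear interval of $\leqR$—which is not part of the bare monotone consecutive property but is part of the doubly forward-convex property. The definition of forward-convex biorder as stated imposes consecutiveness and monotonicity only on the rows; the column-interval condition must be recovered separately. The remedy is to route the converse direction through assertion~\eqref{it3:MCA} of Theorem~\ref{thm:MCA}: a doubly forward-convex biorder is forward-convex for both $M$ and $M\trans$, and the forward-convexity for $M\trans$ forces each column of $M$ to be a linear interval of $\leqR$, supplying exactly condition (ii). Thus the symmetry built into the doubly forward-convex property is precisely what closes the gap, and the whole corollary follows by threading the definitions through Theorems~\ref{thm:Dint} and~\ref{thm:MCA} with careful bookkeeping of the trivial-versus-nonempty row distinction.
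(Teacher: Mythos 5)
Your converse direction ($D$-interval $\Rightarrow$ linearly compatible ones, routed through the doubly forward-convex property of Theorem~\ref{thm:MCA}) is exactly the paper's argument, including the two observations that forward-convexity of $M\trans$ supplies condition (ii) on the columns and that passing from the nonempty rows to the nontrivial rows only deletes terms from the monotone endpoint sequences. The forward direction is where you depart from the paper, and it is where your argument breaks. The paper proves it by heredity: the linearly compatible ones property is inherited by configurations and is possessed by no matrix in $\FDIntR^\infty$, so an LCO matrix satisfies condition (iv) of Theorem~\ref{thm:Dint}.

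You instead claim that a linearly compatible ones biorder $(\leqR,\leqC)$ is already a forward-convex biorder because ``the nonempty endpoint sequences remain monotone after reinserting the full rows.'' This is false. Take
\[
M=\begin{pmatrix}0&1&0\\ 1&1&1\\ 0&1&1\end{pmatrix}.
\]
The canonical biorder is a linearly compatible ones biorder: every row and every column is a linear interval, and the two nontrivial rows equal $[2,2]_{\leqC}$ and $[2,3]_{\leqC}$, whose endpoint sequences are monotone. But the left-endpoint sequence of the three nonempty rows is $2,1,2$, which is not monotone, so the same biorder is not forward-convex. Worse, the repositioning you propose in your second paragraph does not rescue it: with the canonical column order, placing the full row first gives right endpoints $3,2,3$ and placing it last gives left endpoints $2,2,1$, and the remaining row orders fail similarly, so \emph{no} choice of $\leqR$ yields a forward-convex biorder for this $\leqC$. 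A forward-convex biorder exists only after also changing the column order (e.g., to $2\lessC 3\lessC 1$). So the forward direction needs either a genuinely new construction of both orders, or --- as in the paper --- a detour through the forbidden-submatrix characterization, which requires checking instead that no matrix in $\FDIntR^\infty$ (including each $\MI k$) has the linearly compatible ones property.
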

\begin{proof} As none of the matrices in $\FDIntR^\infty$ has the linearly compatible ones property, Theorem~\ref{thm:Dint} implies that if $M$ has the linearly compatible ones property, then $M$ has the $D$-interval property. Conversely, if $M$ has the $D$-interval property, then Theorem~\ref{thm:MCA} ensures that $M$ has the doubly forward-convex property and, in particular, $M$ has the linearly compatible ones property. This completes the proof of the corollary.\end{proof}

Sen and Sanyal~\cite{MR1271987} characterized proper interval bigraphs as the bipartite graphs associated with matrices having the monotone consecutive property. Therefore, because of the equivalence between the linearly compatible ones property and the monotone consecutive property, their result implies Theorem~\ref{thm:pib_cco} in the introduction.

\subsection{Circularly compatible ones property}\label{ssec:CCO}

In this subsection, by combining the results in the preceding section for the $D$-circular property with some results from~\cite{MR3065109}, we derive a minimal forbidden submatrix characterization and a linear-time recognition for the circularly compatible ones property.

Basu et al.~\cite{MR3065109} proved that, for matrices having no trivial rows, the $D$-circular property coincides with the \emph{monotone circular property} defined as follows. Let $\leqX$ be a linear order on some set finite set $X$. Let $X=\{x_1,x_2,\ldots,x_k\}$ where $x_1\lessX x_2\lessX\cdots\lessX x_k$. We denote by $\leqXX$ the linear order on the set $\XX=\{x_1,x_2,\ldots,x_k,x_1^+,x_2^+,\ldots,x_k^+\}$ such that $x_1\lessXX x_2\lessXX\cdots\lessXX x_k\lessXX x_1^+\lessXX x_2^+\lessXX\cdots\lessXX x_k^+$. With each nontrivial circular interval $[a,b]_{\leqX}$, we associate its \emph{unwrapped interval relative to $\leqX$}, denoted $\intXX{a,b}$, which is the linear interval $[a,c]_{\leqXX}$ where $c=b$ if $a\leqX b$, and $c=b^+$ if $b\lessX a$. Let $M$ be a matrix and let $(\leqR,\leqC)$ be a biorder of $M$. Let $r_1,r_2,\ldots,r_p$ be all the nontrivial rows of $M$ in ascending order of $\leqR$. We say $M$ has \emph{monotone left endpoints with respect to $(\leqR,\leqC)$} if $\leqC$ is a circular-ones order of $M$ and if $r_i=[d_i,e_i]_{\leqC}$ for each $i\in[p]$, then $d_1d_2\ldots d_p$ is monotone on $\leqC$. We say that $M$ has \emph{circularly monotone unwrapped right endpoints} if $\leqC$ is a circular-ones order of $M$ and if $r_i=[d_i,e_i]_{\leqC}$ for each $i\in[p]$ and $[d_i,f_i]_{\leqCC}$ is the unwrapped interval of $[d_i,e_i]_{\leqC}$ with respect to $\leqC$ (i.e., $f_i=e_i$ if $d_i\leqC e_i$, whereas $f_i=e_i^+$ if $e_i\lessC d_i$), then $f_1f_2\ldots f_p$ is monotone on $\leqCC$. If $M$ has no trivial rows, a \emph{monotone circular biorder}~\cite{MR3065109} of $M$ is a biorder $(\leqR,\leqC)$ of $M$ such that all the following conditions hold:
\begin{enumerate}[(i)]
 \item $M$ has monotone left endpoints $e_1e_2\ldots e_p$ with respect to $(\leqR,\leqC)$;
 \item $M$ has monotone unwrapped right endpoints $f_1f_2\ldots f_p$ with respect to $(\leqR,\leqC)$;
 \item\label{it3:MCircAdef} either $f_1=e_1^+$ or both $f_1=e_1$ and $f_p\leqCC e_1^+$.
\end{enumerate}
We call condition \eqref{it3:MCircAdef} above the \emph{alignment condition}.\footnote{In the notation of~\cite{MR3065109}, the alignment condition is equivalent to $\mu_m\leq\mu_1+n$. Although this condition is not part of Definition~3.4 in~\cite{MR3065109}, it is implicitly assumed throughout~\cite{MR3065109}. For instance, $M=\left(\begin{smallmatrix}
      1 & 0 & 0 & 0\\
      1 & 1 & 1 & 0\\
      0 & 0 & 1 & 1\\
      1 & 1 & 0 & 1
\end{smallmatrix}\right)$ has no trivial rows and no trivial columns and its canonical biorder $(\leqR,\leqC)$ satisfies the definition of monotone circular biorder of $M$ given here, except for the alignment condition. However, $M$ does not have the following properties asserted for such matrices in~\cite[p.\ 12 and Theorem~3.4]{MR3065109}: (a) all the columns of $M$ are circular intervals of $\leqR$, (b) $M$ has circularly monotone right endpoints with respect to $(\leqR,\leqC)$, and (c) the bipartite graph associated with $M$ is a proper circular-arc bigraph. This is, nevertheless, a minor omission because properties (a), (b), and (c) are fulfilled once Definition~3.4 of~\cite{MR3065109} is amended by adding the alignment condition.} A matrix $M$ having no trivial rows has the \emph{monotone circular property}~\cite{MR3065109} if it admits a monotone circular biorder. The aforementioned result in~\cite{MR3065109} relating the $D$-circular property with the monotone circular property is the following.

\begin{thm}[\cite{MR3065109}]\label{thm:Basu} If $M$ is a matrix having no trivial rows, then the following assertions are equivalent:
\begin{enumerate}[(i)]
 \item\label{it1:Basu} $M$ has the $D$-circular property;
 \item\label{it2:Basu} $M$ has the monotone circular property.
\end{enumerate}\end{thm}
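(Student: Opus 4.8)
The plan is to establish the two implications separately, working directly with the arc endpoints and treating the statement as a careful re-examination of the argument of Basu et al.~\cite{MR3065109}, paying due attention to the alignment condition whose necessity is illustrated in the footnote above.

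For the implication \eqref{it2:Basu}$\Rightarrow$\eqref{it1:Basu}, I would fix a monotone circular biorder $(\leqR,\leqC)$, list the (all nontrivial) rows as $r_1,r_2,\ldots,r_p$ in ascending $\leqR$-order with $r_i=[d_i,e_i]_{\leqC}$, and pass to the doubled order $\leqCC$, replacing each $r_i$ by its unwrapped interval $[d_i,f_i]_{\leqCC}$. The hypotheses give $d_1\leqC\cdots\leqC d_p$ together with $f_1\leqCC\cdots\leqCC f_p$, which is exactly a ``staircase'': on the doubled scale neither of two unwrapped intervals is strictly interior to the other, since $d_i\leqC d_j$ and $f_i\leqCC f_j$ (for $i<j$) preclude both $[d_i,f_i]_{\leqCC}\subsetneq[d_j,f_j]_{\leqCC}$ and $[d_j,f_j]_{\leqCC}\subsetneq[d_i,f_i]_{\leqCC}$ with strict containment of endpoints. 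I would first use the alignment condition to show that every arc lies inside a single window of the unwrapping, so that set differences computed on the doubled line agree with set differences on the circle, and then deduce that for any two rows $r,s$ the difference $s-r$ cannot split into two arcs; hence $s-r$ is a circular interval of $\leqC$. Since each row is already a circular interval of $\leqC$, this shows that $\leqC$ is a $D$-circular order.

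For \eqref{it1:Basu}$\Rightarrow$\eqref{it2:Basu}, I would start from a $D$-circular order $\leqC$ of $M$ and build a compatible row order $\leqR$. The governing fact is that, because every pairwise set difference is a circular interval, no row is strictly interior to another; equivalently, the arcs form a circular staircase. The construction has two parts: choosing where to cut the circle (equivalently, rotating $\leqC$, which preserves the $D$-circular property) so as to fix the $\leqC$-least column, and then ordering the rows lexicographically by their unwrapped left and right endpoints. I would select the cut so that the row with $\leqC$-least left endpoint either wraps around the cut or ends within one revolution before all others---precisely what the alignment condition records---and argue that with this choice sorting the rows by left endpoint simultaneously sorts the unwrapped right endpoints, yielding both monotone left endpoints and monotone unwrapped right endpoints.

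The main obstacle in both directions is the wrap-around bookkeeping. In the forward direction one must rule out exactly the degenerate situation excluded by the hypothesis that was omitted from the original definition---an arc wrapping the cut in a way that makes some $s-r$ a union of two arcs---and this is what the alignment condition prevents. In the backward direction the delicate point is to prove that a \emph{single} cut can be chosen making the left- and right-endpoint orders agree for \emph{all} rows at once; here the assumption that \emph{every} pairwise difference (not merely some) is a circular interval is what forces the global staircase structure and hence guarantees the existence of such a cut.
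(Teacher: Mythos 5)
First, note that the paper gives no proof of this theorem: it is imported wholesale from Basu et al.~\cite{MR3065109}, and the only fragment of its proof that the paper records is Lemma~\ref{lem:D-circ=>MCIrcA}, the explicit row order realizing \eqref{it1:Basu}$\Rightarrow$\eqref{it2:Basu}. Your plan for that direction is consistent with it but adds an unnecessary step: you propose first to ``choose where to cut the circle,'' whereas Lemma~\ref{lem:D-circ=>MCIrcA} asserts that for \emph{any} $D$-circular order $\leqC$, sorting the rows by left endpoint (ties broken by containment) already yields monotone left endpoints, monotone unwrapped right endpoints, \emph{and} the alignment condition; in each case a violation would exhibit a row strictly interior to another, whose difference would then be two arcs, contradicting $D$-circularity. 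So your worry about whether a single suitable cut exists is a non-issue rather than an error.

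The direction \eqref{it2:Basu}$\Rightarrow$\eqref{it1:Basu} is where your sketch has a genuine gap: both intermediate claims you state fail as written. Monotonicity of the $d_i$ and of the $f_i$ does preclude strict nesting of the unwrapped \emph{linear} intervals, but this does not transfer to the circular arcs. With columns $1,\ldots,6$ and rows $r_1=[1,2]_{[6]}=\{1,2\}$ and $r_2=[5,3]_{[6]}=\{5,6,1,2,3\}$ one has $d_1=1\lessC 5=d_2$ and $f_1=2\leqCC 3^+=f_2$, so both monotonicity conditions hold, yet $r_1$ is strictly interior to $r_2$ and $r_2-r_1=\{5,6\}\cup\{3\}$ is not a circular interval; only the alignment condition fails ($f_2=3^+\not\leqCC 2^+=e_1^+$). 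Thus the alignment condition must carry the whole weight, but your proposed mechanism for it --- ``every arc lies inside a single window of the unwrapping'' --- is also false in general: when $f_1=e_1^+$ the unwrapped intervals need not fit in one revolution starting at $d_1$ (take $\ell=8$, $r_1=[3,1]_{[8]}$, $r_2=[7,5]_{[8]}$). A correct argument runs a case analysis on a hypothetical strictly interior pair $r_i\subsetneq r_j$: in every configuration except one the nesting is visible on the unwrapped line and the linear staircase applies; the exceptional configuration is $r_j$ wrapping while $r_i$ lies in the initial segment $\{1,\ldots,e_j\}$, and there the alignment dichotomy finishes the job, since $f_1=e_1^+$ would force the unplussed $f_i$ to dominate a plussed element, while $f_1=e_1$ together with $f_p\leqCC e_1^+$ yields the cyclic chain $e_1\leqC e_i\lessC e_j\leqC e_1$. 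Your outline names the right ingredients, but the bridge from the linear staircase to the circular conclusion is precisely what is missing.
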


Let $M$ be a matrix and let $(\leqR,\leqC)$ be a biorder of $M$. We say $M$ has \emph{circularly monotone right endpoints with respect to $(\leqR,\leqC)$} if $\leqC$ is a circular-ones order of $M$ and if $r_i=[d_i,e_i]_{\leqC}$ for each $i\in[p]$, then $e_1e_2\ldots e_p$ is circularly monotone on $\leqC$. The following was observed in~\cite[p.\ 12]{MR3065109}.

\begin{lem}[\cite{MR3065109}]\label{lem:Basu} If $M$ is a matrix having no trivial rows and $(\leqR,\leqC)$ is a monotone circular biorder of $M$, then the columns of $M$ are circular intervals of $\leqR$ and $M$ has circularly monotone right endpoints with respect to $(\leqR,\leqC)$.\end{lem}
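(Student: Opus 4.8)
The plan is to carry out the whole argument inside the unwrapped linear order $\leqCC$ on $\XX$, where each nontrivial row $r_i=[d_i,e_i]_{\leqC}$ becomes the genuine linear interval $[d_i,f_i]_{\leqCC}$ (its unwrapped interval relative to $\leqC$). Since $M$ has no trivial rows, every row is nontrivial, so the $r_i$ are all the rows and I may reindex so that $\leqR$ is $1,2,\ldots,p$. The definition of a monotone circular biorder gives me exactly two monotonicities to work with: the left endpoints satisfy $d_1\leqC d_2\leqC\cdots\leqC d_p$ (all lying in the first copy of $\XX$), and the unwrapped right endpoints satisfy $f_1\leqCC f_2\leqCC\cdots\leqCC f_p$. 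I would derive both conclusions from these monotonicities together with the alignment condition.

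For the circularly monotone right endpoints, I would split the indices at the \emph{threshold} $t$ at which the $f_i$ pass from the first copy of $\XX$ to the second: as the $f_i$ are $\leqCC$-monotone, there is a $t\in\{0,1,\ldots,p\}$ with $f_i=e_i$ for $i\leq t$ and $f_i=e_i^+$ for $i>t$. Monotonicity within each block then yields $e_1\leqC\cdots\leqC e_t$ and $e_{t+1}\leqC\cdots\leqC e_p$, so the only descents of $e_1e_2\ldots e_p$ read circularly can occur at position $t$ and at the wraparound $p\to 1$. The alignment condition eliminates one of these: if $f_1=e_1^+$ then $t=0$ and the sequence is fully $\leqC$-monotone; otherwise $f_1=e_1$ and $f_p\leqCC e_1^+$, and in the remaining case $t<p$ one has $f_p=e_p^+$, whence $e_p^+\leqCC e_1^+$ forces $e_p\leqC e_1$ and removes the wraparound descent. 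In every case at most one descent survives, which is precisely circular monotonicity.

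For the columns, I would fix a column $c$ and let $T_c$ and $T_{c^+}$ be the sets of rows whose unwrapped interval contains the first avatar $c$, respectively the second avatar $c^+$, of $c$ in $\XX$; the set of rows meeting column $c$ is then $T_c\cup T_{c^+}$. The essential use of ``no trivial rows'' is that each $r_i$ is nonempty and not full, so $[d_i,f_i]_{\leqCC}$ has fewer than $k$ elements (where $k$ is the number of columns) and therefore cannot contain both $c$ and $c^+$, which sit exactly $k$ apart in $\leqCC$; hence $T_c\cap T_{c^+}=\emptyset$. Because the $d_i$ are increasing and the $f_i$ are increasing, $T_c=\{i:d_i\leqC c\text{ and }c\leqCC f_i\}$ is a linear interval of $\leqR$ and $T_{c^+}=\{i:c^+\leqCC f_i\}$ is a suffix of $\leqR$. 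If either set is empty the union is a linear interval and we are done, so the only interesting case is when both are nonempty.

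In that case I would again invoke the alignment condition to show that $T_c$ is in fact a \emph{prefix} of $\leqR$, so that $T_c\cup T_{c^+}$ is a prefix together with a disjoint suffix, i.e.\ a wraparound circular interval $[b,a]_{\leqR}$. Concretely, if $f_1=e_1^+$ then all $f_i$ lie in the second copy, so $c\leqCC f_i$ holds automatically and $T_c=\{i:d_i\leqC c\}$ is a prefix; and if instead $f_1=e_1$ with $f_p\leqCC e_1^+$, then nonemptiness of $T_{c^+}$ gives $c^+\leqCC f_p\leqCC e_1^+$, hence $c\leqCC f_1$ and, by monotonicity, $c\leqCC f_i$ for all $i$, so the right-endpoint constraint is again vacuous and $T_c$ is a prefix. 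I expect the main obstacle to be exactly this last step: recognizing that the alignment condition is the precise hypothesis preventing $T_c$ from being a genuine interior interval (which would make $T_c\cup T_{c^+}$ a union of two separated intervals, not a circular interval), and keeping the first-copy versus second-copy bookkeeping of the endpoints straight. The right-endpoint statement, by contrast, is a short threshold argument.
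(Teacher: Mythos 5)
The paper itself offers no proof of this lemma---it is quoted from Basu et al.\ (p.~12 of that reference)---so there is no in-paper argument to compare against; your blind proof is correct and complete. The threshold argument for the right endpoints is sound: monotonicity of $f_1\leqCC\cdots\leqCC f_p$ forces a single index $t$ at which the $f_i$ cross from the first copy of $\XX$ to the second, confining the possible descents of $e_1e_2\ldots e_p$ to position $t$ and the wraparound, and when $0<t<p$ the alignment condition gives $f_1=e_1$ and $e_p^+=f_p\leqCC e_1^+$, killing the wraparound descent. The column argument is also right: disjointness of $T_c$ and $T_{c^+}$ follows from nontriviality of the rows (a linear interval of $\leqCC$ containing both avatars of a column would have at least $\ell+1$ elements, where $\ell$ is the number of columns, while a nontrivial row unwraps to an interval with at most $\ell-1$ elements); $T_c$ is an interval and $T_{c^+}$ a suffix by the two monotonicities; and the alignment condition is exactly what upgrades $T_c$ to a prefix whenever $T_{c^+}\neq\emptyset$, which is the one case where the union could otherwise fail to be a circular interval. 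You correctly isolated the role of the alignment condition: the paper's footnote to the definition exhibits a matrix satisfying every other clause of the definition of a monotone circular biorder for which the conclusions of this lemma fail, so both of your invocations of that condition are genuinely necessary and your bookkeeping of first-copy versus second-copy endpoints is exactly the right way to organize the argument.
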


Hence, if a matrix $M$ has no trivial rows and admits some monotone circular biorder, then $M$ has the circularly compatible ones property. This fact combined with Theorem~\ref{thm:Basu} implies the following result.

\begin{cor}\label{cor:CCO} If a matrix $M$ has no trivial rows and has the $D$-circular property, then $M$ has the circularly compatible ones property.\end{cor}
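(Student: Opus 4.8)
The plan is to produce a circularly compatible ones biorder of $M$ directly from a monotone circular biorder, combining Theorem~\ref{thm:Basu} with Lemma~\ref{lem:Basu}. First I would invoke Theorem~\ref{thm:Basu}: since $M$ has no trivial rows and has the $D$-circular property, $M$ has the monotone circular property, so it admits a monotone circular biorder $(\leqR,\leqC)$. I would then claim that this very biorder $(\leqR,\leqC)$ witnesses the circularly compatible ones property, and verify the three defining conditions one by one. A preliminary observation I would record is that, because $M$ has no trivial rows, every row of $M$ is nontrivial; hence the list $r_1,r_2,\ldots,r_p$ of nontrivial rows in ascending order of $\leqR$ is the same in the definition of monotone circular biorder and in the definition of the circularly compatible ones property, so no reindexing issue arises.

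Next I would check conditions~(i) and~(ii) of the circularly compatible ones property. Condition~(i), that each row of $M$ is a circular interval of $\leqC$, is immediate, since by definition of a monotone circular biorder $\leqC$ is a circular-ones order of $M$. Condition~(ii), that each column of $M$ is a circular interval of $\leqR$, is supplied verbatim by Lemma~\ref{lem:Basu}.

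The last step is condition~(iii); writing $r_i=[d_i,e_i]_{\leqC}$ for each $i\in[p]$, I must show that both $d_1d_2\ldots d_p$ and $e_1e_2\ldots e_p$ are circularly monotone on $\leqC$. For the right endpoints this is exactly the content of Lemma~\ref{lem:Basu}, which states that $M$ has circularly monotone right endpoints with respect to $(\leqR,\leqC)$. For the left endpoints, the definition of a monotone circular biorder only gives that $d_1d_2\ldots d_p$ is \emph{monotone} on $\leqC$, i.e., $d_i\leqC d_{i+1}$ for every $i\in[p-1]$; here I would use the small observation that a monotone sequence is in particular circularly monotone, since the only possible failure of $d_i\leqC d_{i+1}$ occurs at the wrap-around index $i=p$ (with $d_{p+1}$ read as $d_1$). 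Having verified all three conditions, $(\leqR,\leqC)$ is a circularly compatible ones biorder of $M$, and so $M$ has the circularly compatible ones property.

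There is no serious obstacle in this argument: the corollary is essentially a bookkeeping consequence of the two cited results. The only points demanding care are the remark that ``no trivial rows'' makes the row indexing in the two definitions coincide, and the elementary step that monotonicity implies circular monotonicity for the left endpoints, which closes the gap between the stronger left-endpoint condition appearing in the definition of a monotone circular biorder and the weaker circular monotonicity required by the circularly compatible ones property.
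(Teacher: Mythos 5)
Your argument is correct and is essentially the paper's own: the corollary is obtained by combining Theorem~\ref{thm:Basu} (to get a monotone circular biorder from the $D$-circular property) with Lemma~\ref{lem:Basu} (to get the column condition and circularly monotone right endpoints), the remaining conditions being immediate. Your added bookkeeping --- that ``no trivial rows'' makes the two row lists coincide, and that a monotone sequence of left endpoints is in particular circularly monotone --- correctly fills in the details the paper leaves implicit.
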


For arbitrary matrices (i.e., with trivial rows allowed), the $D$-circular property is not always sufficient to ensure the circularly compatible ones property. We will show that if trivial rows are allowed, then the circularly compatible ones property is equivalent to the doubly $D$-circular property defined as follows: a matrix $M$ has the \emph{doubly $D$-circular property} if $M$ and $M\trans$ have the $D$-circular property. For that purpose, we will introduce the notation $M^{[u]}$ and derive the lemma below from our Theorem~\ref{thm:main}.

If $M$ is a matrix having some trivial row consisting of entries all equal to $u$ for some $u\in\{0,1\}$, we denote by $M^{[u]}$ the matrix that arises from $M$ by adding a last column having a $1-u$ entry at each row where $M$ has all entries equal to $u$, and having a $u$ entry in all the remaining rows. By construction, $M^{[u]}$ has no trivial rows.

\begin{lem}\label{lem:M[u]} Let $M$ be a matrix having some trivial row consisting of entries all equal to $u$ for some $u\in\{0,1\}$. If $M$ has the doubly $D$-circular property, then $M^{[u]}$ has the doubly $D$-circular property.\end{lem}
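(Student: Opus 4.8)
The plan is to reduce to the case $u=0$ and then apply the forbidden-submatrix characterization of Theorem~\ref{thm:main} separately to $M^{[0]}$ and to $(M^{[0]})\trans$. First I would dispose of the case $u=1$ by complementation. Since complementing every entry preserves the $D$-circular property (Lemmas~\ref{lem:D(M)} and~\ref{lem:D(co-M)}) and $(\overline M)\trans=\overline{M\trans}$, it preserves the doubly $D$-circular property. A direct comparison of the two constructions gives $\overline{M^{[1]}}=(\overline M)^{[0]}$, and $\overline M$ has an all-zero row exactly where $M$ has an all-ones row; hence the statement for $u=1$ follows from the statement for $u=0$ applied to $\overline M$. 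So I assume $u=0$: $M$ has an all-zero row $z$, and $M^{[0]}$ is obtained by appending a column $c$ whose $1$'s sit precisely in the all-zero rows of $M$.

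The argument rests on three facts about $\FDCircR^\infty$, each checked from Figure~\ref{fig:forb_DCircRinfty} (the family $\MIast{k},\overline{\MIast{k}}$ treated uniformly in $k$): \textbf{(a)} no matrix in $\FDCircR^\infty$ has an all-zero row; \textbf{(b)} no matrix in $\FDCircR^\infty$ has a \emph{lonely} $1$, i.e.\ an entry that is at once the only $1$ in its row and the only $1$ in its column; and \textbf{(c)} for each $F\in\FDCircR^\infty$ possessing an all-zero column, if $G$ is $F$ with that column deleted, then $(G\trans)^*$ is not $D$-circular. For~(c), the members of $\FDCircR^\infty$ with an all-zero column are exactly $\ZetaUnoAst,\ZetaDosAst,\ZetaTresAst,\ZetaCuatroAst,\CoZetaCuatroAst$, and $\MIast{k}$, whose associated $G$ are $Z_1,Z_2,Z_3,Z_4,Z_3\trans$, and $\MI{k}$; using $Z_4=Z_2\trans$, the identity $\CoZetaCuatroAst=(Z_3\trans)^*$, and the fact that $\MI{k}$ represents the same configuration as its transpose, the matrices $(G\trans)^*$ are $(Z_1\trans)^*,\ZetaCuatroAst,\CoZetaCuatroAst,\ZetaDosAst,\ZetaTresAst$, and $\MIast{k}$. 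All but the first lie in $\FDCircR^\infty$ and hence are not $D$-circular, while $(Z_1\trans)^*$ is not even a circular-ones matrix (its three rows would force one column to be cyclically adjacent to three distinct columns).

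With these in hand, I would argue the two halves. For $M^{[0]}$: if it were not $D$-circular, Theorem~\ref{thm:main} gives some $F\in\FDCircR^\infty$ contained as a configuration. If the configuration avoids $c$, then $F$ is contained in $M$, contradicting that $M$ is $D$-circular. Otherwise $c$ is identified with a column $j_0$ of $F$; as $c$ is nonzero only on all-zero rows of $M$, each row of $F$ with a $1$ in $j_0$ is the unit row $e_{j_0}$, and since the rows of $F$ are distinct there is at most one such row. One such row would be a lonely $1$, excluded by~(b); hence $j_0$ is an all-zero column of $F$. Writing $F=G^*$, fact~(a) guarantees the witnessing rows are nonempty, so $M$ contains $G$ on nonempty rows; appending the all-zero row $z$ and transposing shows $M\trans$ contains $(G\trans)^*$, which by~(c) is not $D$-circular — contradicting that $M\trans$ is $D$-circular. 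For $(M^{[0]})\trans$: this matrix is $M\trans$ with one extra row $w$ whose $1$'s mark the all-zero columns of $M\trans$. If it were not $D$-circular it would contain some $F\in\FDCircR^\infty$; a configuration avoiding $w$ would lie in $M\trans$, a contradiction, so $w$ is identified with a row of $F$. Every $1$ of that row sits in a column of $(M^{[0]})\trans$ carrying no other $1$, hence is a lonely $1$ of $F$; by~(b) the row is all-zero, contradicting~(a).

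The hard part will be the verification of the structural facts~(a)--(c), and within~(c) the observation that deleting the unique all-zero column and transposing keeps us in the non-$D$-circular world (in fact, inside $\FDCircR^\infty$ in five of the six cases), with $\ZetaUnoAst$ requiring the separate circular-ones remark. The one delicate bookkeeping point is in the argument for $M^{[0]}$: I must track that the rows realizing $F$ are nonempty, so that adjoining the all-zero row $z$ and transposing genuinely produces $(G\trans)^*$ as a configuration of $M\trans$; this is exactly where both hypotheses, that $M$ \emph{and} $M\trans$ are $D$-circular, are used, explaining why $D$-circularity of $M$ alone is insufficient.
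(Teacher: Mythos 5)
Your strategy is essentially the paper's: dispose of $u=1$ by complementation, then show that any forbidden configuration in the extended matrix must use the new line as the all-zero column of some $F\in\FDCircR^\infty$, and transport a forbidden configuration back into $M$ or $M\trans$. Your facts (a) and (c) are correct as stated (your case list for (c) even includes $\MIast{k}$, which the paper's corresponding list omits), and the first half of your argument, for $M^{[0]}$ itself, is sound: there you correctly use the pairwise distinctness of the rows of $F$ to reduce to a single unit row $e_{j_0}$, whose $1$ is then genuinely lonely.

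The gap is in the second half, the argument for $(M^{[0]})\trans$. From ``every $1$ of that row sits in a column of $(M^{[0]})\trans$ carrying no other $1$'' you conclude ``hence is a lonely $1$ of $F$.'' That inference is invalid: a lonely $1$ must also be the only $1$ in its \emph{row}, and you have not excluded the possibility that the row of $F$ identified with $w$ carries two or more $1$'s, each the unique $1$ of its own column but none alone in its row. In that situation fact (b) says nothing and fact (a) is not contradicted, so the proof stalls. The repair is the exact column analogue of what you already did in the first half: each such column of $F$ is the unit column with its $1$ in row $i_0$, and since every matrix in $\FDCircR^\infty$ has pairwise distinct columns there is at most one of them; one such column produces a lonely $1$ (excluded by (b)) and none produces an all-zero row (excluded by (a)). The paper avoids this issue by running a single case analysis over $\FDCircR^\infty\cup(\FDCircR^\infty)\trans$ for configurations inside $M^{[0]}$, so that the object to be ruled out is always a unit \emph{row} and the lonely-$1$ fact applies directly; with the one-line patch above, your two-sided version goes through as well.
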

\begin{proof} Let $k$ and $\ell$ be the number of rows and columns of $M$, respectively.

We consider first the case $u=0$. Suppose, for a contradiction, that $M$ has the doubly $D$-circular property but $M^{[0]}$ does not have the doubly $D$-circular property. By Theorem~\ref{thm:main}, $M^{[0]}$ contains as a configuration some matrix $F$ in $\FDCircR^\infty\cup(\FDCircR^\infty)\trans$ where $(\FDCircR^\infty)\trans=\{F\trans\colon\,F\in\FDCircR^\infty\}$. Let $\rho$ and $\sigma$ be a row and a column map such that $(M^{[0]})_{\rho,\sigma}=F$. As $M$ has the doubly $D$-circular property, $M$ does not contain $F$ as a configuration. Hence, necessarily $\ell+1$ (i.e., the index of the last column of $M^{[0]}$) belongs to the image of $\sigma$; let $j=\sigma^{-1}(\ell+1)$. Clearly, each matrix in $\FDCircR^\infty$ has pairwise different rows and pairwise different columns and it can be verified by inspection that no matrix in $\FDCircR^\infty$ has an entry equal to $1$ such that every other entry in the same column and every other entry in the same row is equal to $0$. Thus, for each $i$ in the image of $\rho$, row $i$ of $M$ has some entry different from $0$. Therefore, by construction, all the entries in column $j$ of $F$ are equal to $0$ and $M$ contains as a configuration the matrix $F'$ that arises from $F$ by removing column $j$ and adding one row with all entries equal to $0$. By inspection, the ordered pair $(F,j)$ must be $(\ZetaUnoAst,4)$, $(\ZetaDosAst,4)$, $(\ZetaTresAst,4)$, $(\ZetaCuatroAst,5)$, or $(\CoZetaCuatroAst,3)$, and the corresponding matrix $F'$ contains $\overline{\MIast 3}\trans$, $(\ZetaCuatroAst)\trans$, $\CoZetaCuatroAst\trans$, $(\ZetaDosAst)\trans$, or $(\ZetaTresAst)\trans$, respectively, as a configuration. As $M$ contains $F'$ as a configuration, $M$ does not have the doubly $D$-circular property. This contradiction proves the lemma in case $u=0$.

The proof for the case where $u=1$ follows from the above proof for the case where $u=0$ because $M^{[1]}=\overline{\overline M^{[0]}}$, the doubly $D$-circular property is invariant under matrix complementation, and if $M$ has a row having all entries equal to $1$ then $\overline M$ has a row having all entries equal to $0$. This completes the proof of the lemma.\end{proof}

We need one more lemma.

\begin{lem}\label{lem:CCO=>Dcirc} Each matrix in $\FDCircR^\infty$ contains some matrix in $\FCCO^\infty$ as a configuration.\end{lem}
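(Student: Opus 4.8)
The plan is to verify the containment for each member of $\FDCircR^\infty$ separately, exploiting the heavy overlap between $\FDCircR^\infty$ and $\FCCO^\infty$. First I would observe that any matrix lying in both sets contains the required configuration trivially (namely itself). Comparing the two lists, the matrices $\ZetaDosAst$, $\ZetaTresAst$, $\ZetaCuatroAst$, $\ZetaCinco$, $\ZetaCincoTrans$, $\CoZetaDosAst$, $\CoZetaCuatroAst$ all lie in $\FCCO\subseteq\FCCO^\infty$, and both families $\MIast k$ and $\overline{\MIast k}$ lie in $\FCCO^\infty$ as well. Hence the only matrices requiring an argument are
\[ \ZetaUnoAst,\quad \ZetaSeis,\quad \ZetaSiete,\quad \ZetaOcho,\quad \CoZetaUnoAst,\quad \CoZetaSeis. \]

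Next I would cut the remaining work roughly in half using complementation. Since complementing a matrix commutes with passing to configurations, and since $\FCCO^\infty$ is closed under complementation up to representing the same configuration (indeed $\overline{\ZetaDosAst}=\CoZetaDosAst$ and $\overline{\ZetaCuatroAst}=\CoZetaCuatroAst$, while $\ZetaTresAst$, $\ZetaCinco$, $\ZetaCincoTrans$ are self-complementary, with the analogous facts for the transposes and for the $\MIast k$ families), it suffices to produce a configuration inside $M$ in order to obtain one inside $\overline M$. Because $\CoZetaUnoAst=\overline{\ZetaUnoAst}$ and $\CoZetaSeis=\overline{\ZetaSeis}$, handling $\ZetaUnoAst$ and $\ZetaSeis$ automatically handles $\CoZetaUnoAst$ and $\CoZetaSeis$; and since $\ZetaSiete$ and $\ZetaOcho$ each represent the same configuration as their own complement, nothing extra is needed for them.

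The core of the proof is then four explicit submatrix extractions. For each of $\ZetaUnoAst$, $\ZetaSeis$, and $\ZetaSiete$ — all of which, once the fourth column is dropped, consist of the full row $\{1,2,3\}$ together with the three singletons $\{1\},\{2\},\{3\}$ — restricting to the first three columns yields the matrix $Z_1$ of Figure~\ref{fig:smallmatrices}. The key point is that $Z_1$ (a $3\times 3$ identity with a full row adjoined) represents the same configuration as $\overline{\MIast 3}\trans$, which lies in $\FCCO^\infty$; concretely, $(\ZetaUnoAst)_{\langle 2,3,4,1\rangle,\langle 1,2,3\rangle}$ is an identity matrix with a full row appended and hence equals $\overline{\MIast 3}\trans$ up to permutations of rows and columns. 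For $\ZetaOcho$ I would instead restrict to the first four columns: since its fourth row is supported only on the fifth column, this restriction produces exactly $(\ZetaDosAst)\trans\in\FCCO$. By the complementation reduction this settles $\CoZetaUnoAst$ and $\CoZetaSeis$ as well, which then contain $(\MIast 3)\trans$.

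I expect the main obstacle to be purely a matter of identifying the correct target matrix rather than any hard computation. Because $\FDCircR^\infty$ characterizes the $D$-circular property of $M$ alone, whereas $\FCCO^\infty$ encodes the doubly $D$-circular property (of $M$ and $M\trans$ simultaneously), the configurations that appear are the \emph{transposed} members $\overline{\MIast 3}\trans$ and $(\ZetaDosAst)\trans$, not any untransposed one; indeed $\ZetaUnoAst$ contains neither $\MIast 3$ nor $\overline{\MIast 3}$ as a configuration. Thus recognizing the transposed families as the right witnesses, and carefully tracking the self- and cross-complementary symmetries used to prune the case analysis, is the only genuinely delicate bookkeeping, after which the entry-by-entry checks are immediate by inspection.
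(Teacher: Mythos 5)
Your proof is correct and follows essentially the same route as the paper: reduce to the matrices of $\FDCircR^\infty$ not already in $\FCCO^\infty$, use closure of $\FCCO^\infty$ under complementation (up to configuration) to handle $\CoZetaUnoAst$ and $\CoZetaSeis$, and exhibit $\overline{\MIast 3}\trans$ inside each of $\ZetaUnoAst$, $\ZetaSeis$, $\ZetaSiete$ and $(\ZetaDosAst)\trans$ inside $\ZetaOcho$. The only difference is that you spell out the explicit row/column extractions that the paper leaves to inspection.
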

\begin{proof} Notice that the only matrices in $\FDCircR^\infty$ but not in $\FCCO^\infty$ are, up to complementation, $\ZetaUnoAst$, $\ZetaSeis$, $\ZetaSiete$, and $\ZetaOcho$. On the one hand, each of $\ZetaUnoAst$, $\ZetaSeis$, $\ZetaSiete$ contains $\overline{\MIast 3}\trans$ as a configuration. On the other hand, $\ZetaOcho$ contains $(\ZetaDosAst)\trans$ as a configuration. Since the complement of each matrix in $\FCCO^\infty$ represents the same configuration as some matrix in $\FCCO^\infty$, the proof of the lemma is complete.\end{proof}

The following is the main structural result of this section and characterizes the circularly compatible ones property for arbitrary matrices by minimal forbidden submatrices.

\begin{thm}\label{thm:CCO} For each matrix $M$, the following assertions are equivalent:
\begin{enumerate}[(i)]
 \item\label{it1:CCO} $M$ has the circularly compatible ones property;
 \item\label{it2:CCO} $M$ contains no matrix in $\FCCO^\infty$ as a configuration;
 \item\label{it3:CCO} $M$ has the circular-ones property for rows and for columns and contains as a configuration no matrix in $\FCCO$ as a configuration;
 \item\label{it4:CCO} $M$ has the doubly $D$-circular property.
\end{enumerate}\end{thm}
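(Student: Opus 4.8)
The plan is to establish the cycle of implications (i)$\Rightarrow$(iii)$\Rightarrow$(ii)$\Rightarrow$(iv)$\Rightarrow$(i), isolating the two genuinely geometric steps from the purely combinatorial ones. The step (ii)$\Rightarrow$(iv) I would obtain directly from Theorem~\ref{thm:main} applied to $M$ and to $M\trans$: since $\FCCO^\infty$ is closed under transposition and, by Lemma~\ref{lem:CCO=>Dcirc}, every matrix of $\FDCircR^\infty$ contains some matrix of $\FCCO^\infty$ as a configuration, a matrix that avoids $\FCCO^\infty$ must avoid both $\FDCircR^\infty$ and $(\FDCircR^\infty)\trans$; hence $M$ and $M\trans$ both have the $D$-circular property, which is exactly (iv). For (iii)$\Rightarrow$(ii) I would use the decomposition $\FCCO^\infty=\FCCO\cup\bigcup_{k\ge3}\{\MIast k,\overline{\MIast k},\MIast k\trans,\overline{\MIast k}\trans\}$: the matrices $\MIast k$ and $\overline{\MIast k}$ lie in $\ForbRow$ and so are forbidden by the circular-ones property for rows, their transposes are forbidden by the circular-ones property for columns, and the twelve members of $\FCCO$ are forbidden by hypothesis.

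For (iv)$\Rightarrow$(i) I would argue by reduction to the trivial-row-free case. If $M$ has no trivial rows, Corollary~\ref{cor:CCO} already supplies a circularly compatible ones biorder (the monotone circular biorder produced through Theorem~\ref{thm:Basu} makes the columns circular intervals of $\leqR$ and the right endpoints circularly monotone by Lemma~\ref{lem:Basu}, the left endpoints being monotone by construction). If $M$ has a trivial row, then up to complementation I may assume it has an all-zero row and take $u=0$; by Lemma~\ref{lem:M[u]} the matrix $M^{[0]}$ has no trivial rows and still has the doubly $D$-circular property, hence carries a circularly compatible ones biorder $(\leqR,\leqC')$ by the previous case. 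The essential observation is that the added column of $M^{[0]}$ is incident only with the rows that are trivial in $M$; deleting it from $\leqC'$ therefore keeps every nontrivial row of $M$ a circular interval with unchanged endpoints, keeps every column of $M$ a circular interval of $\leqR$, and simply drops the now-trivial rows from the endpoint sequences, leaving cyclic subsequences that remain circularly monotone. Thus $(\leqR,\leqC')$ restricts to a circularly compatible ones biorder of $M$.

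The implication (i)$\Rightarrow$(iii) carries the weight of the theorem. That a circularly compatible ones biorder makes every row a circular interval of $\leqC$ and every column a circular interval of $\leqR$ is immediate from conditions~(i) and~(ii) of the definition, so $M$ has the circular-ones property for both rows and columns; it remains to show $M$ contains no matrix of $\FCCO$ as a configuration. For this I would first record that the circularly compatible ones property is hereditary under configurations: a biorder restricts to any choice of rows and columns, circular intervals stay circular intervals in the induced order, and the endpoint sequences stay circularly monotone since deleting a column only replaces an endpoint equal to that column by its neighbour and removes the rows that become trivial. Granting heredity, it suffices to verify that none of the twelve matrices of $\FCCO$ admits a circularly compatible ones biorder.

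This finite verification is the main obstacle. Its delicacy is that, unlike in the linear case, the column order of a circularly compatible ones biorder need \emph{not} be a $D$-circular order: a valid biorder can realize one row as an arc interiorly nested inside another, so $D$-circularity cannot simply be read off the biorder, and one cannot shortcut the check by invoking ``CCO $\Rightarrow$ $D$-circular.'' Consequently the verification must show, for each of the twelve matrices, that the circular-interval constraints on rows and columns force the two orders up to reflection, rotation and transposition, and that every such forced choice makes the left- and right-endpoint sequences of the nontrivial rows fail to be \emph{simultaneously} circularly monotone---equivalently, that the arcs cannot be laid out without exactly the improper nesting these matrices encode. I expect the heart of the difficulty to be precisely this: proving that circular monotonicity of the endpoint sequences genuinely rules out such nesting, which is what ultimately forces CCO to coincide with the transposition-symmetric property of being doubly $D$-circular.
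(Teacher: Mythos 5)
Your proposal is correct and follows essentially the same route as the paper: it is the same cycle of implications (merely traversed as (i)$\Rightarrow$(iii)$\Rightarrow$(ii)$\Rightarrow$(iv)$\Rightarrow$(i) rather than the paper's (i)$\Rightarrow$(ii)$\Rightarrow$(iii)$\Rightarrow$(iv)$\Rightarrow$(i)), resting on Theorem~\ref{thm:main}, Lemma~\ref{lem:CCO=>Dcirc}, Corollary~\ref{cor:CCO}, and Lemma~\ref{lem:M[u]} exactly as the paper does. The finite check you single out as the main obstacle---that none of the twelve matrices of $\FCCO$ admits a circularly compatible ones biorder---is precisely the verification the paper's step (i)$\Rightarrow$(ii) also depends on and simply asserts (namely that no matrix in $\FCCO^\infty$ has the circularly compatible ones property), so your treatment is no less complete than the published one.
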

\begin{proof} As none of the matrices in $\FCCO^\infty$ has the circularly compatible ones property and the circularly compatible ones property of a matrix $M$ is inherited by every matrix contained in $M$ as a configuration, assertion~\eqref{it1:CCO} implies assertion~\eqref{it2:CCO}. As each matrix in $\ForbRow$ contains some matrix in $\FDCircR^\infty$ as a configuration (by Lemma~\ref{lem:forb_Circ1R}) and each matrix in $\FDCircR^\infty$ contains some matrix in $\FCCO^\infty$ (by Lemma~\ref{lem:CCO=>Dcirc}), it follows that each matrix in $\ForbRow$ contains some matrix in $\FCCO^\infty$ as a configuration. Hence, since $\FCCO^\infty$ is closed under matrix transposition and $\FCCO\subseteq\FCCO^\infty$, Theorem~\ref{thm:circR} shows that assertion~\eqref{it2:CCO} implies assertion~\eqref{it3:CCO}. It follows from the proof of Lemma~\ref{lem:CCO=>Dcirc} that every matrix in $\FDCircR$ contains some matrix in $
\FCCO$ as a configuration. Therefore, since $\FCCO$ is closed under matrix transposition, Theorem~\ref{thm:main} shows that assertion~\eqref{it3:CCO} implies assertion~\eqref{it4:CCO}. In order to complete the proof of theorem it only remains to prove that assertion~\eqref{it4:CCO} implies assertion~\eqref{it1:CCO}. For that purpose, suppose that assertion~\eqref{it4:CCO} holds; i.e., $M$ has the doubly $D$-circular property. If $M$ has no trivial row, then Corollary~\ref{cor:CCO} ensures that $M$ has the circularly compatible ones property. Hence, we assume, without loss of generality, that $M$ has some trivial row. Thus, $M$ has some row whose entries are all equal to $u$ for some $u\in\{0,1\}$. By Lemma~\ref{lem:M[u]}, $M^{[u]}$ has the doubly $D$-circular property. Since $M^{[u]}$ has no trivial row, Corollary~\ref{cor:CCO} ensures that $M^{[u]}$ has the circularly compatible ones property. As $M$ is a submatrix of $M^{[u]}$, also $M$ has the circularly compatible ones property. This proves that assertion~\eqref{it4:CCO} implies assertion~\eqref{it1:CCO} and the proof of the theorem is complete.
\end{proof}

K{\"o}bler, Kuhnert, and Verbitsky~\cite{MR3573905} proved that a graph is a proper circular-arc graph if and only if its augmented adjacency matrix has the $D$-circular property. Because of Theorem~\ref{thm:pca_cco}, their result is equivalent to the theorem below. We observe that we can derive the result as a special case of Theorem~\ref{thm:CCO}.

\begin{thm}[\cite{TuckerPhD,MR3573905}] If $M$ is the augmented adjacency matrix of some graph, then $M$ has the circularly compatible ones property if and only if $M$ has the $D$-circular property.\end{thm}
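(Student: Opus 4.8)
The plan is to derive this immediately from Theorem~\ref{thm:CCO}, which establishes that an arbitrary matrix has the circularly compatible ones property if and only if it has the doubly $D$-circular property (that is, both the matrix and its transpose have the $D$-circular property). The single observation that makes the augmented adjacency matrix case special is that such a matrix is \emph{symmetric}: an adjacency matrix is symmetric and adding $1$'s along the main diagonal preserves symmetry, so if $M$ is the augmented adjacency matrix of a graph, then $M\trans=M$ as matrices (not merely up to permutation of rows and columns).

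From this symmetry the argument is short. First I would note that, since $M\trans$ and $M$ are literally the same matrix, $M$ has the $D$-circular property if and only if $M\trans$ has the $D$-circular property. Consequently, the doubly $D$-circular property of $M$---which by definition requires both $M$ and $M\trans$ to have the $D$-circular property---collapses to the single requirement that $M$ have the $D$-circular property. In other words, for a symmetric matrix the $D$-circular property and the doubly $D$-circular property coincide.

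Finally, I would invoke the equivalence of assertions~\eqref{it1:CCO} and~\eqref{it4:CCO} in Theorem~\ref{thm:CCO}: $M$ has the circularly compatible ones property if and only if $M$ has the doubly $D$-circular property. Chaining this with the previous paragraph yields that $M$ has the circularly compatible ones property if and only if $M$ has the $D$-circular property, which is exactly the claim.

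There is essentially no hard step here once Theorem~\ref{thm:CCO} is available; the entire content of the proof is the remark that augmented adjacency matrices are symmetric, so that the word ``doubly'' in the doubly $D$-circular property becomes vacuous. The only point worth confirming for robustness is that the symmetry holds on the nose (i.e., $M\trans=M$ rather than merely $M\trans$ equal to $M$ up to permutations of rows and columns), which is clear from the definition of the augmented adjacency matrix; this guarantees that the $D$-circular property transfers between $M$ and $M\trans$ with no relabeling subtleties.
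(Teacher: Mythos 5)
Your proposal is correct and follows exactly the paper's own argument: invoke the equivalence of assertions~(i) and~(iv) in Theorem~\ref{thm:CCO} and observe that, since an augmented adjacency matrix is symmetric, the doubly $D$-circular property collapses to the $D$-circular property. Nothing is missing.
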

\begin{proof} It follows from the equivalence between assertions~\eqref{it1:CCO} and~\eqref{it4:CCO} in Theorem~\ref{thm:CCO}. In fact, as $M$ is symmetric, $M$ has the $D$-circular property if and only if $M$ has the doubly $D$-circular property.\end{proof}

We will now give a linear-time recognition algorithm for the circularly compatible ones property. In order to do so, we will rely on the following result which corresponds to the proof of implication (2)${}\Rightarrow{}$(3) of Theorem~3.4 in~\cite{MR3065109}.

\begin{lem}[\cite{MR3065109}]\label{lem:D-circ=>MCIrcA} Let $M$ be a matrix having no trivial rows and having some $D$-circular order $\leqC$. Let $r_1,r_2,\ldots,r_p$ be all the rows of $M$ in ascending order of some linear order $\leqR$ such that $r_i=[d_i,e_i]_{\leqC}$ for each $i\in[p]$ and, for every two $i,j\in[p]$, if $r_i\leqR r_j$ then either $d_i\lessR d_j$ or both $d_i=d_j$ and $r_i\subseteq r_j$. Then, $(\leqR,\leqC)$ is a monotone circular biorder.\end{lem}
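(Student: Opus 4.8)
The plan is to verify, one at a time, the three conditions in the definition of a monotone circular biorder for the given pair $(\leqR,\leqC)$, exploiting throughout the correspondence (built into the definition of unwrapped intervals) between a nontrivial circular interval $[d,e]_{\leqC}$ and its unwrapped interval $[d,f]_{\leqCC}$, together with the projection $\pi\colon\XX\to X$ that sends both $x$ and $x^+$ to $x$. Since $\leqC$ is a $D$-circular order, every row of $M$ is a circular interval of $\leqC$, so $\leqC$ is in particular a circular-ones order of $M$, as required by all three conditions. The monotone left endpoints condition is then immediate: by hypothesis $r_i\leqR r_j$ forces $d_i\lessC d_j$ or $d_i=d_j$, so $d_1d_2\ldots d_p$ is monotone on $\leqC$.

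For the monotone unwrapped right endpoints, I would show $f_i\leqCC f_{i+1}$ for each $i$ by contradiction, assuming $f_{i+1}$ strictly precedes $f_i$ in $\leqCC$. Because $M$ has no trivial rows, each row $r_i$ omits at least one column, so the block $[d_i,f_i]_{\leqCC}$ stops short of $d_i^+$ and $\pi$ maps it bijectively onto $r_i$. If $d_i=d_{i+1}$, the tie-breaking clause yields $r_i\subseteq r_{i+1}$, and since both unwrapped intervals begin at $d_i$ the block of $r_i$ is a prefix of that of $r_{i+1}$, giving $f_i\leqCC f_{i+1}$, a contradiction. If instead $d_i\lessC d_{i+1}$, then in $\leqCC$ we have $d_i\leqCC d_{i+1}\leqCC f_{i+1}\leqCC f_i$ with $d_i\neq d_{i+1}$ and $f_{i+1}\neq f_i$, so $[d_{i+1},f_{i+1}]_{\leqCC}$ sits inside $[d_i,f_i]_{\leqCC}$ meeting neither end. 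Applying $\pi$ gives $r_{i+1}\subsetneq r_i$ with $r_{i+1}$ avoiding both extreme columns of the arc $r_i$; hence $r_i-r_{i+1}$ projects to two nonempty arcs, separated on the circle by $r_{i+1}$ on one side and by the nonempty set $X-r_i$ on the other. Thus $r_i-r_{i+1}$ is not a circular interval of $\leqC$, contradicting that $\leqC$ is a $D$-circular order.

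For the alignment condition I would distinguish two cases according to whether $r_1$ wraps. If $r_1$ wraps, then $f_1=e_1^+$ and the first disjunct holds. Otherwise $f_1=e_1$ with $d_1\leqC e_1$, and it remains to prove $f_p\leqCC e_1^+$. Suppose not, so that $e_1^+$ strictly precedes $f_p$ in $\leqCC$; since no base-copy element follows a plus-copy element, $f_p$ lies in the plus copy, i.e.\ $r_p$ wraps and $f_p=e_p^+$ with $e_1\lessC e_p$. Combining $e_p\lessC d_p$ (as $r_p$ wraps), $d_1\leqC d_p$ (monotone left endpoints), and $d_1\leqC e_1\lessC e_p$, one gets $r_1=[d_1,e_1]_{\leqC}\subseteq\{x\colon x\leqC e_p\}\subseteq r_p$, while neither extreme column $d_p$ nor $e_p$ of the arc $r_p$ lies in $r_1$ (because $e_1\lessC e_p\lessC d_p$). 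So $r_1$ again sits in the interior of the arc $r_p$, whence $r_p-r_1$ splits into two nonempty arcs and fails to be a circular interval, contradicting $D$-circularity. This completes the verification of all three conditions.

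The hard part will be the geometric bookkeeping in the doubled order $\leqCC$: one must translate cleanly, via $\pi$, between inclusions of unwrapped linear intervals and inclusions or set differences of the underlying circular intervals, and then confirm in each case that the two leftover pieces are genuinely separated by nonempty gaps on both sides of the circle. The assumption that $M$ has no trivial rows is used in two essential places — it makes $\pi$ injective on each unwrapped interval (which stops short of the plus-copy of its left endpoint) and it makes $X-r_i$ nonempty — and it is precisely these two facts that force the leftover arcs to be non-adjacent, hence disconnected, and therefore not a single circular interval.
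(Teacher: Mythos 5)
Your proof is correct. Note, however, that the paper itself gives no proof of this lemma: it is imported verbatim from Basu et al.\ (the paper explicitly says it ``corresponds to the proof of implication (2)$\Rightarrow$(3) of Theorem~3.4'' of that reference), so there is nothing internal to compare your argument against. Taken on its own terms, your verification of the three conditions is sound. The left-endpoint monotonicity is indeed immediate from the hypothesis on $\leqR$ (you correctly read the statement's ``$d_i\lessR d_j$'' as $d_i\lessC d_j$, which is the only sensible interpretation since $d_i,d_j$ are columns). For the unwrapped right endpoints, your two cases work: when $d_i=d_{i+1}$ the tie-breaking clause gives nested prefixes of the injective window starting at $d_i$, and when $d_i\lessC d_{i+1}$ the assumed failure $f_{i+1}\lessCC f_i$ places $[d_{i+1},f_{i+1}]_{\leqCC}$ strictly inside $[d_i,f_i]_{\leqCC}$ off both endpoints, so $r_i-r_{i+1}$ splits into two nonempty arcs separated by the nonempty sets $r_{i+1}$ and $X-r_i$, contradicting $D$-circularity. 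The alignment argument is the same mechanism applied to $r_p-r_1$. You correctly isolate where nontriviality of the rows is used (injectivity of $\pi$ on each unwrapped interval and nonemptiness of $X-r_i$); the only implicit step worth making explicit is that in the ``strictly interior'' configurations one has $d_i\neq e_i$ (a single-column row cannot properly contain a nonempty row avoiding its unique column), so the two leftover arcs really are disjoint and nonempty.
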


We are ready to give the main algorithmic result of this section.

\begin{thm}\label{thm:algo_CCO} There is a linear-time algorithm that, given any matrix $M$, outputs either a circularly compatible ones biorder of $M$ or a matrix in $\FCCO^\infty$ contained in $M$ as a configuration.\end{thm}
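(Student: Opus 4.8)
The plan is to reduce everything to two calls of the linear-time $D$-circular recognizer of Theorem~\ref{thm:algo_Dcirc}, exploiting the equivalence (Theorem~\ref{thm:CCO}) between the circularly compatible ones property and the doubly $D$-circular property, i.e.\ the $D$-circularity of both $M$ and $M\trans$. First I would run that algorithm on $M$ and on $M\trans$. If either run reports a forbidden configuration $F\in\FDCircR^\infty$, I would convert it into a configuration in $\FCCO^\infty$: by Lemma~\ref{lem:CCO=>Dcirc} every matrix in $\FDCircR^\infty$ contains some matrix in $\FCCO^\infty$, and the only conversions actually needed are the constant-size extractions for $\ZetaUnoAst$, $\ZetaSeis$, $\ZetaSiete$, $\ZetaOcho$ (and their complements), whereas the members $\MIast k$ and $\overline{\MIast k}$ already lie in $\FCCO^\infty$ and are output verbatim. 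A configuration found inside $M$ is reported directly; a configuration $F$ found inside $M\trans$ gives $F\trans$ as a configuration of $M$, and since $\FCCO^\infty$ is closed under transposition the extracted member is transposed accordingly. Thus whenever $M$ is not doubly $D$-circular---equivalently, lacks the circularly compatible ones property---a matrix in $\FCCO^\infty$ contained in $M$ as a configuration is produced in linear time.

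If instead both runs succeed, then $M$ is doubly $D$-circular, so by Theorem~\ref{thm:CCO} it has the circularly compatible ones property and I must \emph{exhibit} a biorder. The construction is cleanest when $M$ has no trivial rows: the successful run on $M$ supplies a $D$-circular order $\leqC$ of the columns, and I would sort the rows into the order $\leqR$ prescribed by Lemma~\ref{lem:D-circ=>MCIrcA} (ascending by left endpoint $d_i$, with ties broken so that $r_i\subseteq r_j$ whenever $d_i=d_j$ and $r_i\leqR r_j$). That lemma guarantees $(\leqR,\leqC)$ is a monotone circular biorder, and Lemma~\ref{lem:Basu} then guarantees that the columns are circular intervals of $\leqR$ and that $M$ has circularly monotone right endpoints; combined with the monotone (hence circularly monotone) left endpoints, this is exactly a circularly compatible ones biorder, which I would output.

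It remains to handle trivial rows, and here the idea is to borrow the device $M^{[u]}$ from the proof of Theorem~\ref{thm:CCO}. Complementation leaves the circularly compatible ones property and its biorders invariant, so I may assume $u=0$ and that $M$ has an all-$0$ row. I would form $M'=M^{[0]}$; adding this one column makes every row of $M'$ nontrivial, and by Lemma~\ref{lem:M[u]} the matrix $M'$ is still doubly $D$-circular, hence $D$-circular. Running Theorem~\ref{thm:algo_Dcirc} on $M'$ therefore returns a $D$-circular order $\leqC'$, from which the previous paragraph produces a circularly compatible ones biorder $(\leqR',\leqC')$ of $M'$. I would recover a biorder of $M$ by deleting the added column: take $\leqR=\leqR'$ (the two matrices share their rows) and let $\leqC$ be $\leqC'$ restricted to the columns of $M$. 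Deleting one point from a circular interval leaves a circular interval, so rows of $M$ stay circular intervals of $\leqC$ and columns of $M$ stay circular intervals of $\leqR$; moreover every \emph{nontrivial} row of $M$ has a $0$ in the added column, so that column is never one of its endpoints, and the left- and right-endpoint sequences of the nontrivial rows of $M$ are subsequences of the circularly monotone endpoint sequences of $M'$ and remain circularly monotone after the deletion.

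The running time is linear throughout: the two calls to Theorem~\ref{thm:algo_Dcirc}, the single extra call on $M'$, and the forbidden-configuration conversions are each linear or constant apart from copying, and the row sort is linear via radix sort on the endpoints. The step I expect to be the main obstacle is the verification in the trivial-row case---checking that restricting a circularly compatible ones biorder of $M^{[0]}$ to $M$ preserves all three defining conditions, in particular that a subsequence (in cyclic order) of a circularly monotone sequence is again circularly monotone and that this property survives the removal of the added column from $\leqC'$---together with implementing the tie-breaking of the row order of Lemma~\ref{lem:D-circ=>MCIrcA} within the linear-time budget.
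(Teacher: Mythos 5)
Your proof is correct and follows essentially the same route as the paper's: reduce to the $D$-circular recognizer of Theorem~\ref{thm:algo_Dcirc}, build the biorder via Lemmas~\ref{lem:D-circ=>MCIrcA} and~\ref{lem:Basu}, handle trivial rows via $M^{[u]}$ and Lemma~\ref{lem:M[u]}, and convert forbidden configurations via Lemma~\ref{lem:CCO=>Dcirc}. The only real difference is that your separate run on $M\trans$ is redundant: by Corollary~\ref{cor:CCO}, a matrix without trivial rows that is $D$-circular already has the circularly compatible ones property, so the paper gets by with a single call on $M$ (or on $M^{[u]}$) and instead handles the failure case on $M^{[u]}$ by extracting a forbidden configuration of $M$ through the proof of Lemma~\ref{lem:M[u]}.
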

\begin{proof} We suppose, for the moment, that $M$ has no trivial rows. We argue at the end of this proof that this assumption is without loss of generality. We begin by applying the linear-time algorithm in Theorem~\ref{thm:algo_Dcirc}. If the output is a matrix in $\FDCircR^\infty$ contained in $M$ as a configuration, we are done because this leads to a matrix in $\FCCO^\infty$ contained in $M$ as a configuration in additional $O(1)$ time (see the proof of Lemma~\ref{lem:CCO=>Dcirc}). Thus, we assume, without loss of generality, that the output is a $D$-circular order $\leqC$ of $M$. As we are assuming that $M$ has no trivial rows, Lemma~\ref{lem:D-circ=>MCIrcA} ensures that in linear time we can also find a linear order $\leqR$ of the rows of $M$ such that $(\leqR,\leqC)$ is a monotone circular biorder. Once again, as we are assuming that $M$ has no trivial rows, Lemma~\ref{lem:Basu} implies $(\leqR,\leqC)$ is a circularly compatible ones biorder.

It only remains to show that the assumption that $M$ has no trivial rows is indeed without loss of generality. Suppose that $M$ has some trivial row all whose entries are equal to $u$ for some $u\in\{0,1\}$. Notice that $\size(M^{[u]})\in O(\size(M))$. Hence, we can apply the procedure described in the preceding paragraph to $M^{[u]}$ (instead of $M$) in order to produce in linear time either a circularly compatible ones biorder of $M^{[u]}$ or a matrix $F$ in $\FDCircR^\infty$ contained in $M^{[u]}$ as a configuration. On the one hand, a circularly compatible ones biorder of $M^{[u]}$ induces a circularly compatible ones biorder of $M$ by restriction to the rows and columns of $M$. On the other hand, given some matrix $F$ in $\FDCircR^\infty$ contained in $M^{[u]}$ as a configuration, the proof of Lemma~\ref{lem:M[u]} gives a linear-time procedure for finding a matrix $F'$ contained in $M$ as a configuration and such that $F'\in\FDCircR^\infty$ or $(F')\trans\in\FDCircR^\infty$, which leads to a matrix in $\FCCO^\infty$ contained in $M$ as a configuration in additional $O(1)$ time (see the proof of Lemma~\ref{lem:CCO=>Dcirc}). Hence, the whole algorithm can be completed in linear time. This completes the proof of the theorem.
\end{proof}

\section{Proper circular-arc bigraphs}\label{sec:PCAB+PIB}

The main results in this section are a characterization by minimal forbidden induced subgraphs and a linear-time recognition algorithm for proper circular-arc bigraphs. These results arise by combining the results obtained in the preceding sections with the following result (and its proof).

\begin{thm}[\cite{MR3065109}]\label{thm:BasuPCAb} Let $G$ be a bipartite graph and let $M$ be a biadjacency matrix of $G$. If $M$ has no trivial rows, then the following assertions are equivalent:
\begin{enumerate}[(i)]
\item\label{it:BasuPCAb1} $M$ has the $D$-circular property;

\item\label{it:BasuPCAb2} $G$ is a proper circular-arc bigraph.
\end{enumerate}\end{thm}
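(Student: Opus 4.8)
The plan is to prove the equivalence by exhibiting an explicit correspondence between $D$-circular orders of $M$ and proper circular-arc bimodels of $G$, thereby translating the matrix property into the geometric definition of the graph class. Since $M$ is a biadjacency matrix of $G$ with respect to some bipartition $\{X,Y\}$, its rows correspond to the vertices in $X$ and its columns to the vertices in $Y$, and the entry $(x,y)$ is $1$ precisely when $xy$ is an edge of $G$. The hypothesis that $M$ has no trivial rows means every vertex in $X$ has at least one and at most all possible neighbours in $Y$; this is exactly what is needed so that each row can be realized as a proper (nonempty, nonuniversal) arc.

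First I would prove the implication \eqref{it:BasuPCAb2}$\Rightarrow$\eqref{it:BasuPCAb1}. Starting from a proper circular-arc bimodel $\{\mathcal F_1,\mathcal F_2\}$ of $G$, where the arcs in $\mathcal F_2$ correspond to the columns $Y$, I would read off a cyclic arrangement of the columns from the circular order in which the arc-endpoints of $\mathcal F_2$ appear on the circle, and let $\leqC$ be the induced linear order. The key observation is that for each row $r$ (an arc $A_x\in\mathcal F_1$), the set of columns $y$ with $A_x\cap B_y\neq\emptyset$ is a circular interval of $\leqC$, because the columns whose arcs meet a fixed arc $A_x$ form a contiguous block around the circle; this uses the properness condition to rule out containments that would break contiguity. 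For the $D$-circular condition I must also show that for two rows $r,s$ the set difference $s-r$ is a circular interval; here I would argue geometrically that the arcs meeting $B_s$ but not $B_r$ form a contiguous arc-range, again invoking properness to prevent an arc of $\mathcal F_1$ from being properly nested inside another.

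For the converse \eqref{it:BasuPCAb1}$\Rightarrow$\eqref{it:BasuPCAb2}, I would invoke the machinery already developed: by Theorem~\ref{thm:Basu}, since $M$ has no trivial rows, the $D$-circular property is equivalent to the monotone circular property, so $M$ admits a monotone circular biorder $(\leqR,\leqC)$. By Lemma~\ref{lem:Basu}, the columns of $M$ are then circular intervals of $\leqR$ and $M$ has circularly monotone right endpoints. The plan is to place $2(k+\ell)$ points around a circle---two endpoints for each row-arc and each column-arc---using the orders $\leqR$ and $\leqC$ together with the monotonicity of the left endpoints $d_i$ and the unwrapped right endpoints $f_i$ to position the arcs so that arc $B_y$ (column) meets arc $A_x$ (row) if and only if $y\in r_x$ as a circular interval. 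The monotone circular biorder is precisely the combinatorial data guaranteeing that no two row-arcs and no two column-arcs are nested, which is the properness requirement of the bimodel.

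The main obstacle I anticipate is the careful endpoint placement in the converse direction: one must interleave the row-endpoints and column-endpoints consistently so that intersections realize exactly the $1$-entries of $M$, and the alignment condition in the definition of monotone circular biorder (the footnoted condition \eqref{it3:MCircAdef}) is exactly what prevents a degenerate ``wrap-around'' that would either create spurious intersections or force a nesting. I expect that the cleanest route is to follow the construction implicit in the proof of Theorem~3.4 of~\cite{MR3065109} rather than re-derive it from scratch, since the statement explicitly attributes the result to that source; thus the proof can be kept short by citing that the stated equivalence is established there, with the role of the alignment condition made explicit as in the footnote, and the present contribution being to package it together with the forbidden-submatrix and algorithmic results of the preceding sections.
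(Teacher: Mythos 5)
Your instinct to defer to the cited source is consistent with how the paper treats this statement: Theorem~\ref{thm:BasuPCAb} is imported from~\cite{MR3065109} and the paper does not reprove it, supplying only an explicit version of the \eqref{it:BasuPCAb1}$\Rightarrow$\eqref{it:BasuPCAb2} construction because that construction is what the recognition algorithm of Theorem~\ref{thm:algo_PCAB} executes. Where you genuinely diverge is in that construction. You route through Theorem~\ref{thm:Basu} and Lemma~\ref{lem:Basu} to obtain a monotone circular biorder and then propose placing $2(k+\ell)$ endpoints on the circle, interleaving row and column endpoints; you correctly identify that the delicate part is this interleaving and the role of the alignment condition. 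The paper's construction is considerably lighter: take a $D$-circular order of the columns, place only $\ell$ points on the circle, represent each column by the singleton point $\{j\}$ and each row by the closed arc $[a_i,b_i]_{\mathcal C}$ given by its circular interval. The family $\mathcal F_2$ of singletons is trivially proper, and the only possible failure of properness in $\mathcal F_1$ is a nesting $A_1\subseteq A_2$ of row arcs; the $D$-circular property forces such nested rows to share an endpoint (otherwise the set difference would split into two pieces and not be a circular interval), so an arbitrarily small perturbation of the endpoints yields a proper bimodel without changing any intersection. This avoids the monotone circular machinery entirely for this direction and is what makes the linear-time bound immediate. Your approach is not wrong, but it front-loads exactly the difficulty (consistent endpoint interleaving) that the paper's degenerate-column-arc trick is designed to sidestep.

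One substantive gap in your sketch of \eqref{it:BasuPCAb2}$\Rightarrow$\eqref{it:BasuPCAb1}: the assertion that the columns in $s-r$ ``form a contiguous arc-range'' is the entire content of the $D$-circular property beyond the circular-ones property, and ``invoking properness'' does not yet establish it. You need to argue that because no two arcs of $\mathcal F_1$ are nested, two row arcs $A_r$ and $A_s$ either are disjoint, overlap in one contiguous piece, or cover the circle together, and in each case the column arcs meeting $A_s$ but not $A_r$ occupy a contiguous stretch of the chosen circular column order; moreover that circular column order must itself be defined carefully (e.g., by one fixed endpoint of each arc of $\mathcal F_2$), using properness of $\mathcal F_2$ as well. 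Since the paper leans on~\cite{MR3065109} for this direction, the omission does not contradict the paper, but as written your argument for the set-difference condition is an assertion rather than a proof.
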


We give the details of the construction of the proper circular-arc bimodel in~\cite{MR3065109} for the proof of \eqref{it:BasuPCAb1}${}\Rightarrow{}$\eqref{it:BasuPCAb2} in the above theorem.
Let $G$ and $M$ be as in Theorem~\ref{thm:BasuPCAb} and suppose assertion~\eqref{it:BasuPCAb1} holds. By permuting the columns (if necessary), we assume, without loss of generality, that the canonical order of the columns of $M$ is a $D$-circular order of $M$. Let $k$ and $\ell$ be the number of rows and columns of $M$, respectively. As usual, we identify the rows and columns of $M$ with their corresponding row and column indices. Thus, for each $i\in[k]$, row $i$ of $M$ equals $[a_i,b_i]_{[\ell]}$ for some $a_i,b_i\in[\ell]$. Let $\mathcal C$ be a circle with $\ell$ different points on it, labeled with $1$, $2$, \ldots, and $\ell$ in clockwise order. If $a,b\in[\ell]$, let $[a,b]_{\mathcal C}$ be the closed arc of $\mathcal C$ which extends in the clockwise direction from $a$ to $b$. By construction, $G$ is the bipartite intersection graph of the families $\mathcal F_1=\{[a_i,b_i]_{\mathcal C}\colon\,i\in[k]\}$ and $\mathcal F_2=\{\{j\}\colon\,j\in[\ell]\}$ and if two arcs $A_1$ and $A_2$ of $\mathcal F_1$ are such that $A_1\subseteq A_2$, then $A_1$ and $A_2$ share at least one endpoint. Therefore, it is possible to slightly perturb the endpoints of the arcs in $\mathcal F_1$ so that $\{\mathcal F_1,\mathcal F_2\}$ becomes a proper circular-arc bimodel of $G$ and thus assertion~\eqref{it:BasuPCAb2} holds. In this way, if $M$ is given together with some $D$-circular order of it, then a proper circular-arc bimodel of $G$ can be found in linear time (i.e., in $O(n+m)$ time).

\begin{figure}[t!]
\ffigbox[\textwidth]{%
\ffigbox[\FBwidth]{%
\begin{subfloatrow}
\ffigbox[0.20\textwidth]{\includegraphics{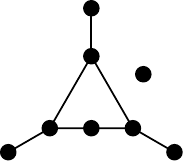}}{\caption{Bipartite graph associated with $\ZetaDosAst$ (and $\ZetaCuatroAst$)}}
\ffigbox[0.20\textwidth]{\includegraphics{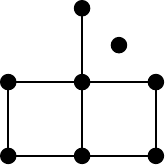}}{\caption{Bipartite graph associated with $\ZetaTresAst$ (and $\CoZetaCuatroAst$)}}\quad
\ffigbox[0.20\textwidth]{\includegraphics{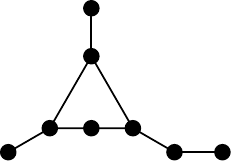}}{\caption{Bipartite graph associated with $\ZetaCinco$}}
\ffigbox[0.20\textwidth]{\includegraphics{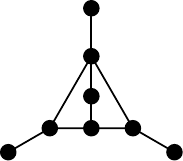}}{\caption{Bipartite graph associated with $\CoZetaDosAst$}}
\end{subfloatrow}}{}
}{\caption{Bipartite graphs in $\ForbPCAB$ associated with matrices in $\FCCO$}\label{fig:someFPCAB}}
\end{figure}

We now give our linear-time recognition algorithm for proper circular-arc bigraphs. When the input graph is not a proper circular-arc bigraph, the algorithm outputs some induced subgraph of the input graph that belongs to the following family of minimal forbidden induced subgraphs:
\[ \ForbPCAB=\{H\colon\,H\text{ is the bipartite graph associated with some matrix in $\FCCO^\infty$}\}.  \]
The graphs in $\ForbPCAB$ are those depicted in Figure~\ref{fig:someFPCAB} plus the bipartite graphs associated with $\MIast k$ and $\overline{\MIast k}$ (i.e., the chordless cycle on $2k$ vertices plus an isolated vertex, and its bipartite complement, respectively) for each $k\geq 3$. 

\begin{thm}\label{thm:algo_PCAB} There is a linear-time algorithm that, given any bipartite graph $G$, outputs either a proper circular-arc bimodel of $G$ or a graph in $\ForbPCAB$ contained in $G$ as an induced subgraph.\end{thm}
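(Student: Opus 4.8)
The plan is to pass to matrix language via Theorems~\ref{thm:pcab_cco} and~\ref{thm:algo_CCO}, run the circularly compatible ones recognition on a biadjacency matrix of $G$, and convert each of the two possible outcomes back into a graph certificate. First I would two-color the connected components of $G$ to obtain a bipartition $\{X,Y\}$ in $O(n+m)$ time (isolated vertices being placed on either side), and then form the biadjacency matrix $M$ of $G$ with respect to $X$ and $Y$, so that $\size(M)=O(n+m)$. Feeding $M$ to the algorithm of Theorem~\ref{thm:algo_CCO} returns, in linear time, either a circularly compatible ones biorder of $M$ or a matrix $F\in\FCCO^\infty$ contained in $M$ as a configuration, together with the row and column maps that realize it.

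In the first case $M$ has the circularly compatible ones property, so $G$ is a proper circular-arc bigraph and it remains to exhibit a bimodel. By Theorem~\ref{thm:CCO}, $M$ has the doubly $D$-circular property and in particular the $D$-circular property, so a $D$-circular order of its columns can be computed through Theorem~\ref{thm:algo_Dcirc}. If $M$ has no trivial rows, I would feed this order into the explicit construction described after Theorem~\ref{thm:BasuPCAb} (columns placed as points around a circle in $D$-circular order, each row realized as the arc spanning its circular interval, with endpoints perturbed to destroy proper containments), which produces a proper circular-arc bimodel of $G$ in linear time. If $M$ has a trivial row whose entries are all equal to some $u$, I would instead work with $M^{[u]}$, which has no trivial rows and, by Lemma~\ref{lem:M[u]}, again enjoys the doubly $D$-circular property; the construction then yields a bimodel of the bipartite graph associated with $M^{[u]}$, and discarding the point associated with the added column returns a bimodel of $G$.

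In the second case the bipartite graph $H$ associated with $F$ lies in $\ForbPCAB$ by definition, and since $F$ arises from $M$ by deleting and permuting rows and columns, the maps supplied by Theorem~\ref{thm:algo_CCO} exhibit $H$ as an induced subgraph of $G$, which I would output. The step I expect to be the main obstacle is to certify that this is a correct \emph{negative} answer, i.e.\ that no proper circular-arc bigraph contains a member of $\ForbPCAB$ as an induced subgraph; equivalently, that whether a biadjacency matrix of $G$ has the circularly compatible ones property does not depend on the chosen bipartition. As the class of proper circular-arc bigraphs is hereditary (restrict both arc families), it is enough to prove that each $H\in\ForbPCAB$ is not a proper circular-arc bigraph, which by Theorem~\ref{thm:pcab_cco} means that \emph{no} biadjacency matrix of $H$ has the circularly compatible ones property. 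Here I would use that $\FCCO^\infty$ is closed under transposition together with the simple component structure of these graphs: when $H$ is connected its bipartition is unique up to exchanging the two sides, so its biadjacency matrices are, up to permuting rows and columns, just $F$ and $F\trans$, both of which lie in $\FCCO^\infty$ and hence lack the property; the remaining members of $\ForbPCAB$ carry only a bounded number of isolated or universal vertices, so the finitely many further bipartitions yield finitely many biadjacency matrices that are checked, by inspection, to belong to $\FCCO^\infty$ up to transposition, the two infinite families (the graphs $C_{2k}$ plus an isolated vertex and their bipartite complements) being handled uniformly by means of $\MI k\cong\MI k\trans$. Granting this, a bimodel is returned precisely when $G$ is a proper circular-arc bigraph and a forbidden induced subgraph otherwise, and since every step above runs in linear time the whole algorithm runs in $O(n+m)$ time.
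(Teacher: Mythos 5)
Your proposal is correct and follows essentially the same route as the paper: reduce to a biadjacency matrix, run the circularly compatible ones algorithm of Theorem~\ref{thm:algo_CCO}, translate a forbidden configuration into a member of $\ForbPCAB$, and otherwise build the bimodel via a $D$-circular order and the construction after Theorem~\ref{thm:BasuPCAb}, using $M^{[u]}$ to handle trivial rows exactly as the paper does. Your final paragraph about certifying that no graph in $\ForbPCAB$ is a proper circular-arc bigraph (and independence from the chosen bipartition) is not actually required for this statement, which only asserts that one of the two outputs is produced; the paper defers that issue to Theorem~\ref{thm:PCAb}.
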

\begin{proof} Let $M$ be any biadjacency matrix of $G$. We apply the algorithm of Theorem~\ref{thm:algo_CCO} to $M$. If the output is some matrix in $\FCCO^\infty$ contained in $M$ as a configuration, this immediately leads to a graph in $\ForbPCAB$ contained in $G$ as an induced subgraph. Thus, we assume, without loss of generality, that $M$ has the circularly compatible ones property. If $M$ has no trivial rows, we can apply Theorem~\ref{thm:algo_Dcirc} to obtain a $D$-circular order of $M$ and apply the construction in the proof of \eqref{it:BasuPCAb1}${}\Rightarrow{}$\eqref{it:BasuPCAb2} of Theorem~\ref{thm:BasuPCAb} given above in order to produce a proper circular-arc bimodel of $G$ in linear time. Hence, we assume, without loss of generality, that $M$ has some trivial row all whose entries are equal to $u$ for some $u\in\{0,1\}$. By Theorem~\ref{thm:CCO} and Lemma~\ref{lem:M[u]}, $M^{[u]}$ has the $D$-circular property. Hence, as $M^{[u]}$ has no trivial rows, we can proceed as the proof of \eqref{it:BasuPCAb1}${}\Rightarrow{}$\eqref{it:BasuPCAb2} of Theorem~\ref{thm:BasuPCAb} given above in order to produce a proper circular-arc bimodel $\{\mathcal F_1,\mathcal F_2^\ast\}$ of the bipartite graph associated with $M^{[u]}$ in linear time, where the arc $A^*$ for the vertex of $G$ corresponding to the last column of $M^{[u]}$ belongs $\mathcal F_2^\ast$. By removing $A^*$ from $\mathcal F_2^\ast$, we obtain a proper circular-arc bimodel of $G$. As $\size(M)\in O(n+m)$ and also $\size(M^{[u]})\in O(n+m)$, the whole algorithm takes linear time, completing the proof of the theorem.\end{proof}

Our next result gives some characterizations for arbitrary proper circular-arc bigraphs, including a characterization by minimal forbidden induced subgraphs.

\begin{thm}\label{thm:PCAb} If $G$ is a bipartite graph and $M$ is a biadjacency matrix of $G$, then the following assertions are equivalent:
\begin{enumerate}[(i)]
 \item\label{it1:PCAb} $G$ is a proper circular-arc bigraph;

 \item\label{it2:PCAb} $G$ contains no graph in $\ForbPCAB$ as an induced subgraph;
 
 \item\label{it3:PCAb} $M$ has the doubly $D$-circular property;
 
 \item\label{it4:PCAb} $M$ has the circularly compatible ones property.
\end{enumerate}\end{thm}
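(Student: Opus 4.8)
The plan is to reduce the whole statement to the matrix-level characterization already available in Theorem~\ref{thm:CCO} and then transport it to $G$ through the standard dictionary between configurations of $M$ and induced subgraphs of $G$. First I would observe that the equivalence of assertions~\eqref{it3:PCAb} and~\eqref{it4:PCAb} is literally the equivalence of assertions~\eqref{it4:CCO} and~\eqref{it1:CCO} in Theorem~\ref{thm:CCO}, so nothing has to be done there. The remaining work is to tie the graph-theoretic assertions~\eqref{it1:PCAb} and~\eqref{it2:PCAb} to the matrix assertion~\eqref{it4:PCAb}, and I would organize this as the equivalence \eqref{it2:PCAb}${}\Leftrightarrow{}$\eqref{it4:PCAb} together with the two implications \eqref{it4:PCAb}${}\Rightarrow{}$\eqref{it1:PCAb} and \eqref{it1:PCAb}${}\Rightarrow{}$\eqref{it4:PCAb}.

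For \eqref{it2:PCAb}${}\Leftrightarrow{}$\eqref{it4:PCAb} I would prove the dictionary lemma asserting that $M$ contains some matrix in $\FCCO^\infty$ as a configuration if and only if $G$ contains some graph in $\ForbPCAB$ as an induced subgraph; combined with the equivalence \eqref{it1:CCO}${}\Leftrightarrow{}$\eqref{it2:CCO} of Theorem~\ref{thm:CCO} this is exactly assertion~\eqref{it2:PCAb}${}\Leftrightarrow{}$\eqref{it4:PCAb}. The forward half is routine: a row map and a column map realizing a configuration $F\in\FCCO^\infty$ select a set of vertices of $G$ whose induced subgraph is precisely the bipartite graph associated with $F$, which lies in $\ForbPCAB$ by definition. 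The converse is the delicate half: an induced copy of some $H\in\ForbPCAB$ inherits the bipartition of $G$, and the resulting biadjacency matrix $F'$ is a submatrix of $M$, hence a configuration of $M$, so I must argue that $F'$ again lies in $\FCCO^\infty$. Since $\FCCO^\infty$ is closed under transposition this is immediate when $H$ is connected, because then its bipartition is unique up to swapping the two sides and $F'$ is $F$ or $F\trans$. The genuine point is the disconnected members of $\ForbPCAB$ — those associated with matrices having an all-zero column, such as $\ZetaDosAst$, $\ZetaTresAst$, and the $\MIast k$ — where the isolated vertex may be placed on either side of the bipartition; here I would verify by inspection that every biadjacency matrix of such an $H$ already appears in $\FCCO^\infty$, which is exactly why $\FCCO$ was made to contain both $\ZetaDosAst$ and $\ZetaCuatroAst$, both $\ZetaTresAst$ and $\CoZetaCuatroAst$, and both $\MIast k$ and $\MIast k\trans$ (cf.\ Figure~\ref{fig:someFPCAB}).

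For \eqref{it4:PCAb}${}\Rightarrow{}$\eqref{it1:PCAb} I would invoke the construction already carried out in the proof of Theorem~\ref{thm:algo_PCAB}: by Theorem~\ref{thm:CCO} assertion~\eqref{it4:PCAb} gives that $M$ has the $D$-circular property, and then Theorem~\ref{thm:BasuPCAb} produces a proper circular-arc bimodel when $M$ has no trivial rows, while the trivial-row case is reduced to the matrix $M^{[u]}$ via Lemma~\ref{lem:M[u]} and the deletion of one arc. For the reverse implication \eqref{it1:PCAb}${}\Rightarrow{}$\eqref{it4:PCAb} I would use that proper circular-arc bigraphs form a hereditary class (restricting a proper circular-arc bimodel to the arcs of an induced subgraph is again such a bimodel), together with two simple observations: adding or deleting trivial rows never affects the $D$-circular property, since a trivial row, together with every set difference involving it, is always a circular interval; and the definition of a proper circular-arc bigraph is symmetric in the two families $\mathcal F_1$ and $\mathcal F_2$, so $M\trans$ is also a biadjacency matrix of the same proper circular-arc bigraph. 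Thus deleting the trivial rows of $M$ yields a biadjacency matrix with no trivial rows of an induced subgraph of $G$, which is again a proper circular-arc bigraph; Theorem~\ref{thm:BasuPCAb} then gives that this submatrix, and hence $M$, has the $D$-circular property, and applying the same argument to $M\trans$ yields the $D$-circular property of $M\trans$, so $M$ has the doubly $D$-circular property, which is assertion~\eqref{it4:PCAb}.

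I expect the main obstacle to be precisely the converse half of the dictionary lemma used for \eqref{it2:PCAb}${}\Rightarrow{}$\eqref{it4:PCAb}: when a forbidden induced subgraph $H$ is disconnected, the bipartition that $G$ induces on it need not match the natural row/column bipartition of the matrix $F$ that $H$ was built from, so the configuration extracted from $M$ is a priori only \emph{some} biadjacency matrix of $H$ rather than $F$ or $F\trans$ itself. The resolution is bookkeeping rather than a new idea — it is the deliberate redundancy of $\FCCO^\infty$, namely closure under transposition together with the inclusion of the counterparts that absorb an isolated vertex on either side, that forces the complete list of biadjacency matrices of each $H\in\ForbPCAB$ to fall inside $\FCCO^\infty$ — but it is the one place where the statement is not a formal consequence of the matrix theorem and must be checked against the explicit list of matrices.
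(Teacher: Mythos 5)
Your proof is correct, but it is organized differently from the paper's, which disposes of the theorem in four lines: the paper shows \eqref{it1:PCAb}${}\Rightarrow{}$\eqref{it2:PCAb} by asserting that no graph in $\ForbPCAB$ is a proper circular-arc bigraph (plus heredity of the class), gets \eqref{it2:PCAb}${}\Rightarrow{}$\eqref{it1:PCAb} from Theorem~\ref{thm:algo_PCAB}, and declares the equivalence of \eqref{it2:PCAb}, \eqref{it3:PCAb}, and \eqref{it4:PCAb} to be immediate from Theorem~\ref{thm:CCO}, leaving the graph--matrix dictionary implicit. You instead link the graph assertions to the matrix ones through \eqref{it4:PCAb}: your implication \eqref{it1:PCAb}${}\Rightarrow{}$\eqref{it4:PCAb} is a genuinely different argument (heredity of the bimodel, insensitivity of $D$-circularity to trivial rows, symmetry of the bimodel definition under swapping $\mathcal F_1$ and $\mathcal F_2$, and Theorem~\ref{thm:BasuPCAb} applied to both $M$ with its trivial rows deleted and to $M\trans$), and it buys you something the paper only asserts --- that no graph in $\ForbPCAB$ is a proper circular-arc bigraph now follows as a corollary of the cycle of implications rather than being an unproved premise of one of them. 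Your other real contribution is making explicit the one nontrivial point hidden in the paper's ``follows immediately'': an induced copy of a disconnected $H\in\ForbPCAB$ determines its biadjacency submatrix of $M$ only up to the placement of the isolated vertex on either side of the bipartition, and you correctly observe that this is absorbed by the built-in redundancy of $\FCCO^\infty$ --- closure under transposition together with the pairings $\ZetaDosAst$/$\ZetaCuatroAst$, $\ZetaTresAst$/$\CoZetaCuatroAst$ (note $\CoZetaCuatroAst$ represents the same configuration as $(Z_3\trans)^*$, as used in the proof of Theorem~\ref{thm:algo_Dint}), and $\MIast k$/$\MIast k\trans$, which is exactly why Figure~\ref{fig:someFPCAB} labels each graph with two matrices. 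The cost of your route is that it is longer and leans on an explicit case check of the disconnected members of $\ForbPCAB$; the benefit is that every step is verifiable without the reader having to reconstruct the dictionary for themselves.
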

\begin{proof} Assertion~\eqref{it1:PCAb} implies assertion~\eqref{it2:PCAb} because no graph in $\ForbPCAB$ is a proper circular-arc bigraph and every induced subgraph of a proper circular-arc bigraph is a proper circular-arc bigraph. Conversely, Theorem~\ref{thm:algo_PCAB} shows that assertion~\eqref{it2:PCAb} implies assertion \eqref{it1:PCAb}. The equivalence between assertions~\eqref{it2:PCAb}, \eqref{it3:PCAb}, and~\eqref{it3:PCAb} follows immediately from Theorem~\ref{thm:CCO}. The proof of the theorem is complete.
\end{proof}

One may proceed in a similar fashion for proper interval bigraphs. Given a bipartite graph $G$, one may apply Theorem~\ref{thm:algo_Dint} to any biadjacency matrix of $M$ in order to produce either a $D$-interval order of $M$ or some matrix $F$ in $\FDIntR^\infty$ contained in $M$ as a configuration. On the one hand, if the output is a $D$-interval model, then a proper interval bimodel of $G$ can be built as in the proof of \eqref{it:BasuPCAb1}${}\Rightarrow{}$\eqref{it:BasuPCAb2} of Theorem~\ref{thm:BasuPCAb} given above. On the other hand, if the output is some matrix $F$ in $\FDIntR^\infty$, this immediately leads to an induced subgraph of $G$ that belongs to \begin{align*}
   \ForbPIB&=\{H\colon\,H\text{ is the bipartite graph associated with some matrix in $\FDIntR^\infty$}\}\\
           &=\{\text{bipartite claw},\text{bipartite net},\text{bipartite tent}\}\cup\{C_{2k}\colon\;k\geq 3\}, 
\end{align*}
where the bipartite claw, the bipartite net, and the bipartite tent are depicted in Figure~\ref{fig:someFPIB} and $C_{2k}$ denotes the chordless cycle on $2k$ vertices. In this way, one obtains a new linear-time recognition algorithm for proper interval graphs which, like the algorithm devised by Hell and Huang~\cite{MR2134416}, is able to find a minimal forbidden induced subgraph in any bipartite graph that is not a proper interval bigraph.

\begin{figure}[t!]
\ffigbox[\textwidth]{%
\ffigbox[\FBwidth]{%
\begin{subfloatrow}
\ffigbox[0.20\textwidth]{\includegraphics{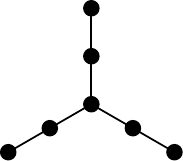}}{\caption{bipartite claw (associated with $Z_1$)}}
\ffigbox[0.20\textwidth]{\includegraphics{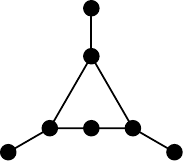}}{\caption{bipartite net (associated with $Z_2$)}}\quad
\ffigbox[0.20\textwidth]{\includegraphics{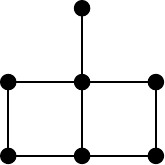}}{\caption{bipartite tent (associated with $Z_3$)}}
\end{subfloatrow}}{}
}{\caption{Bipartite claw, bipartite tent, and bipartite tent}\label{fig:someFPIB}}
\end{figure}

\begin{thm}[\cite{MR2134416}] There is a linear-time algorithm that, given any bipartite graph $G$, outputs either a proper interval bimodel of $G$ or a graph in $\ForbPIB$ contained in $G$ as an induced subgraph.\end{thm}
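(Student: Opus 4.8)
The plan is to mirror, on the line, the argument already used for proper circular-arc bigraphs in Theorem~\ref{thm:algo_PCAB}, replacing the circular machinery by its linear counterpart. Let $M$ be any biadjacency matrix of $G$; since $\size(M)\in O(n+m)$ it suffices to work with $M$. First I would run the linear-time algorithm of Theorem~\ref{thm:algo_Dint} on $M$. By that theorem its output is either a $D$-interval order of $M$ or a matrix $F\in\FDIntR^\infty$ contained in $M$ as a configuration, and I would treat the two cases separately. For existence I can already appeal to Corollary~\ref{cor:LCO} together with Theorem~\ref{thm:pib_cco}: a matrix with the $D$-interval property has the linearly compatible ones property, so its associated bipartite graph is a proper interval bigraph; the construction below simply realizes this model explicitly and in linear time.

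In the first case the output is a $D$-interval order, which after permuting columns I may take to be the canonical order of the columns of $M$. I would then copy the bimodel construction from the proof of \eqref{it:BasuPCAb1}${}\Rightarrow{}$\eqref{it:BasuPCAb2} of Theorem~\ref{thm:BasuPCAb}, but with a line in place of the circle: place $\ell$ points $1,2,\dots,\ell$ on a line, represent each row $i$ (a linear interval with endpoints $a_i$ and $b_i$) by the closed interval $[a_i,b_i]$ to form the family $\mathcal F_1$, and let $\mathcal F_2$ consist of the singletons $\{j\}$. As before, $G$ is exactly the bipartite intersection graph of $\mathcal F_1$ and $\mathcal F_2$. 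The crucial point I would verify is that whenever a row $r$ is properly contained in a row $s$ the two intervals share an endpoint: this is forced by the $D$-interval property, since if $r$ were strictly interior to $s$ then $s-r$ would split into two linear intervals, contradicting that $s-r$ is a linear interval. Hence the endpoints of the members of $\mathcal F_1$ can be perturbed slightly so that no interval is a proper subset of another, while $\mathcal F_2$ is proper automatically (its members sit at distinct points), giving a proper interval bimodel of $G$. I would also note that, unlike the circular case, trivial rows need no special treatment on a line: a full row is the genuine interval $[1,\ell]$, and an empty row can be placed off to the side, disjoint from every point, so no analogue of the $M^{[u]}$ device is required.

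In the second case the output is some $F\in\FDIntR^\infty$ with $M_{\rho,\sigma}=F$ for a row map $\rho$ and a column map $\sigma$. Deleting the rows and columns of $M$ outside the images of $\rho$ and $\sigma$ deletes the corresponding vertices of $G$, so the bipartite graph $H$ associated with $F$ is an induced subgraph of $G$, and $H\in\ForbPIB$ by the very definition of that family. Translating $F$ into $H$ takes $O(\size(F))$ time, which is $O(1)$ for the sporadic matrices $Z_1,Z_2,Z_3,Z_1\trans,Z_2\trans,Z_3\trans$ and $O(k)$ for $\MI k$, hence $O(n+m)$ in all cases; and the named members of $\ForbPIB$ are exactly the bipartite claw, net, and tent (associated with $Z_1$, $Z_2$, $Z_3$ and their transposes) together with the cycles $C_{2k}$ (associated with $\MI k$). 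The overall running time is dominated by the call to the algorithm of Theorem~\ref{thm:algo_Dint}, which is linear, plus the linear-time bimodel construction or the constant-time translation of the forbidden submatrix, so the whole procedure is linear.

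I expect the only real obstacle to be the verification in the model case: confirming that the shared-endpoint property indeed permits a \emph{simultaneous} perturbation making every member of $\mathcal F_1$ proper while preserving all $\mathcal F_1$--$\mathcal F_2$ incidences, and checking that trivial and repeated rows are handled correctly. This is routine once the circular argument of Theorem~\ref{thm:BasuPCAb} is in hand, and is in fact strictly simpler on the line than on the circle, so I do not anticipate any genuine difficulty beyond bookkeeping.
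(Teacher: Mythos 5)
Your proposal is correct and follows essentially the same route as the paper: apply the algorithm of Theorem~\ref{thm:algo_Dint} to a biadjacency matrix, build a proper interval bimodel by the linear analogue of the construction in the proof of \eqref{it:BasuPCAb1}${}\Rightarrow{}$\eqref{it:BasuPCAb2} of Theorem~\ref{thm:BasuPCAb} when a $D$-interval order is returned, and otherwise translate the forbidden submatrix in $\FDIntR^\infty$ into the corresponding graph in $\ForbPIB$. The extra details you supply (the shared-endpoint argument forced by the $D$-interval property, and the remark that trivial rows need no analogue of $M^{[u]}$ on the line) are correct refinements of what the paper leaves implicit.
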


Indeed, as observed in~\cite{MR2071482}, since none of the graphs in $\ForbPIB$ is a proper interval bigraph, the above result implies that $\ForbPIB$ is the set of minimal forbidden induced subgraphs for the class of proper interval bigraphs (a fact that also follows, for instance, combining the works~\cite{MR0221974} and~\cite{MR1368750}).

\section*{Acknowledgments}

This work was partially supported by ANPCyT Grant PICT-2017-1315 and Universidad Nacional del Sur Grant PGI 24/L103.

%\bibliographystyle{abbrv}
%\bibliography{circularly}

\setcounter{section}{0}
\renewcommand\thesection{\Alph{section}}

\section{Appendix}\label{sec:app}

\subsection{Proof of Lemma~\ref{lem:R}}\label{app:lem:R}

\begin{proof}[Proof of Lemma~\ref{lem:R}] Let the \emph{conjugate} of a senary sequence $\lambda$ be the sequence that arises from the reversal of $\lambda$ by interchanging $4$'s with $5$'s. We begin by proving the following claim.

\begin{enumerate}[{Claim }1:]
\item \emph{If $\lambda=\lambda_1\lambda_2\ldots\lambda_k$ is a senary sequence of length $k$ for some $k\geq 3$, then $R(\overline\lambda)=\overline{R(\lambda)}$. Moreover, if $\mu$ arises from $\lambda$ by applying a sequence of shifts and conjugations, then $R(\mu)$ represents the same configuration as $R(\lambda)$.}
\begin{proof} The equality $R(\overline\lambda)=\overline{R(\lambda)}$ follows immediately from definition. In order to prove the second statement of the lemma, it suffices to prove that $R(\mu)$ represents the same configuration as $R(\lambda)$ when $\mu$ is the shift or the conjugate of $\lambda$ (as the general case would then follow by induction). Let $m$ be the number of rows of $R(\lambda)$. Suppose first that $\mu$ is the shift of $\lambda$. If we define $\rho$ as $\langle 2,3,\ldots,m,1\rangle$ or $\langle 3,4,\ldots,m,1,2\rangle$, depending on whether $\lambda_1\in\{0,1\}$ or not, respectively, and we define $\sigma=\langle 2,3,\ldots,k,1,k+1\rangle$, then $R(\mu)=R(\lambda)_{\rho,\sigma}$. Suppose now that $\mu$ is the conjugate of $\lambda$. For each $i\in[k]$, let $\tilde Q_{\lambda_i}(i,k)$ denote $Q_{\lambda_i}(i,k)$ whenever $\lambda_i\notin\{4,5\}$ and the matrix that arises by switching the two rows of $Q_{\lambda_i}(i,k)$ otherwise. If $\sigma=\langle 1,k,k-1,\ldots,2,k+1\rangle$, then $R(\mu)_{\id_m,\sigma}$ is formed precisely by the rows of $\tilde Q_{\lambda_k}(k,k)$, followed by the rows of $\tilde Q_{\lambda_{k-1}}(k-1,k)$, $\ldots$, followed by the rows of $\tilde Q_{\lambda_1}(1,k)$. Therefore, $R(\mu)_{\id_m,\sigma}$ arises from $R(\lambda)$ by rearranging the rows; in particular, $R(\mu)$ and $R(\lambda)$ represent the same configuration. This completes the proof of the lemma.\end{proof}
\end{enumerate}

Since the complement of each matrix in $\FDCircR^\infty$ represents the same configuration as some matrix in $\FDCircR^\infty$, it follows from the above claim that the lemma holds for some sequence $\lambda$ if and only if it holds for any sequence that arises from $\lambda$ by applying a sequence of shifts, complementations, and conjugations. 

Let $\lambda=\lambda_1\lambda_2\ldots\lambda_k$ be a senary sequence of length $k$ for some $k\geq 3$. Moreover, if $k=3$, we suppose further that neither $4$ nor $5$ occurs in $\lambda$. For each $i\in[k]$, we define $p_i$ as the index of the row of $R(\lambda)$ corresponding to the first row of $Q_{\lambda_i}(i,k)$. If $\lambda_i\notin\{0,1\}$, then we also define $q_i=p_i+1$ (i.e., the index of the row of $R(\lambda)$ corresponding to the second row of $Q_{\lambda_i}(i,k)$). We consider several cases. In each of the cases, it is always assumed that none of the preceding cases holds.

\begin{enumerate}[{Case }1:]
\item\label{case:01}\emph{$\lambda$ is a binary sequence.} By construction, $R(\lambda)=\lambda\miop\MIast k$. By applying shifts and conjugating $\lambda$ (if necessary), we assume, without loss of generality, that $\lambda$ is a bracelet. Moreover, if $k=3$, we may assume that $\lambda\in A_3$ because $011\miop\MIast 3$ represents the same configuration as $000\miop\MIast 3$ and $001\miop\MIast 3$ represents the same configuration as $111\miop\MIast 3$. Thus, $R(\lambda)\in\ForbRow$ and, by virtue of Lemma~\ref{lem:forb_Circ1R}, $R(\lambda)$ contains some matrix in $\FDCircR^\infty$ as a configuration.

\item\label{case:23}\emph{$23$, $24$, $32$, $35$, $43$, or $52$ occurs circularly in $\lambda$.} By shifting, complementing, and/or conjugating $\lambda$ (if necessary), we assume, without loss of generality, that $23$ or $24$ occurs in $\lambda$ at position $1$. On the one hand, if $23$ occurs in $\lambda$ at position $1$, then $R(\lambda)_{\langle 4,1,2,3\rangle, \langle k+1,2,3,1\rangle}=\ZetaCincoTrans$. On the other hand, if $24$ occurs in $\lambda$ at position $1$, then $R(\lambda)_{\langle 1,3,4,2\rangle,\langle 3,1,2,k+1\rangle}=\ZetaCincoTrans$.

\item\label{case:k3}\emph{$k=3$.} There only possible values of $\lambda$ up to shifts, reversals, and conjugations are $002$, $003$, $012$, $022$, $033$, or $222$. If $\lambda=002$, then $R(\lambda)_{\langle 4,1,3,2\rangle,\langle 2,1,3,4\rangle}=\ZetaTresAst$. If $\lambda=003$, then $R(\lambda)_{\langle 4,1,2,3\rangle,\langle 1,3,4,2\rangle}=\ZetaSiete$. If $\lambda=012$, then $R(\lambda)_{\langle 2, 1, 3, 4\rangle}=\ZetaCinco$. If $\lambda=022$, then $R(\lambda)_{\langle 3,1,2,5\rangle}=\ZetaDosAst$. If $\lambda=033$, then $R(\lambda)_{\langle 2,1,5,3\rangle,\langle 3,2,1,4\rangle}=\CoZetaSeis$. Finally, if $\lambda=222$, then $R(\lambda)_{\langle 1,4,6,2\rangle}=\ZetaUnoAst$.

\item\label{case:k>=4+(23)}\emph{Either $2$ or $3$ occurs in $\lambda$.} As Case~\ref{case:k3} does not hold, $k\geq 4$. By shifting and complementing $\lambda$ (if necessary), we suppose, without loss of generality, that $2$ occurs in $\lambda$ at position $1$. Since Case~\ref{case:23} does not hold, $\lambda_2\in\{0,1,2,5\}$. If $\lambda_2=0$, then $R(\lambda)_{\langle 2,3,1\rangle,\langle 2,1,k+1,4,3\rangle}=\CoZetaCuatroAst$. If $\lambda_2=2$, then $R(\lambda)_{\langle 2,4,1\rangle,\langle 1,2,k+1,3,4\rangle}=\CoZetaCuatroAst$. If $\lambda_2=5$, then $R(\lambda)_{\id_4,\langle k+1,2,4,3\rangle}=\CoZetaDosAst$. Thus, we assume, without loss of generality, that $\lambda_2=1$. By the same reasoning applied to the conjugate of $\lambda$, we assume, without loss of generality, that $\lambda_k=1$. Hence, $R(\lambda)_{\langle 2,3,p_k,1\rangle,\langle 2,1,k+1,3\rangle}=\CoZetaSeis$.

\item\label{case:45}\emph{$(\lambda_i,\lambda_{i+2})\in\{(4,4),(4,5),(5,4),(5,5)\}$ for some $i\in[k]$}. By shifting and complementing $\lambda$ (if necessary), we assume, without loss of generality that $\lambda_1=4$ and $\lambda_3\in\{4,5\}$. As Case~\ref{case:k3} does not hold, $k\geq 4$. On the one hand, if $\lambda_3=4$, then $R(\lambda)_{\langle 1,p_3,q_3,2\rangle,\langle 4,k+1,3,2,1\rangle}=\ZetaOcho$. On the other hand, if $\lambda_3=5$, then $R(\lambda)_{\langle q_3,1,2,p_3\rangle,\langle 2,k+1,1,3,4\rangle}=\ZetaOcho$.

\item\label{case:44}\emph{$44$ occurs circularly in $\lambda$.} By shifting $\lambda$ (if necessary), we assume without loss of generality that $44$ occurs in $\lambda$ at position $1$. Thus, $R(\lambda)_{\langle 3,1,2,4\rangle,\langle 2,k+1,1, 3))}=\ZetaCincoTrans$.

\item\label{case:(0*4)}\emph{$(\lambda_i,\lambda_{i+2})\in\{(0,4),(4,1),(1,5),(5,0)\}$ for some $i\in[k]$.} By shifting, complementing, and conjugating $\lambda$ (if necessary), we assume, without loss of generality, that $\lambda_1=0$ and $\lambda_3=4$. As Case~\ref{case:k3} does not hold, $k\geq 4$. As none of Cases~\ref{case:k>=4+(23)} and~\ref{case:44} holds, one of $004$, $014$, and $054$ occurs in $\lambda$ at position $1$ and, as a consequence, $R(\lambda)_{\id_4,\langle 2,3,4,k+1\rangle}$ is $\ZetaDosAst$, $\ZetaSeis$, or $\ZetaCincoTrans$, respectively.

\item\label{case:k=4}\emph{None of the preceding cases holds.} As none of Cases~\ref{case:01} and \ref{case:k>=4+(23)} holds, $4$ or $5$ occurs in $\lambda$. By shifting and complementing $\lambda$ (if necessary), we assume, without loss of generality, that $4$ occurs in $\lambda$ at position $1$. As none of the Cases~\ref{case:45} and~\ref{case:(0*4)} holds, $\lambda_{k-1}=1$ and $\lambda_3=0$. In particular, $k\neq 4$. Moreover, since Case~\ref{case:k3} does not hold, $k\geq 5$. Thus, $R(\lambda)_{\langle 1,p_{k-1},2,p_3\rangle,\langle k-1,k+1,1,2\rangle}=\ZetaCincoTrans$.
\end{enumerate}
As we have considered all the cases, the proof of the lemma is complete.\end{proof}

\subsection{Proof of Lemma~\ref{lem:W}}\label{app:lem:W}

\begin{proof}[Proof of Lemma~\ref{lem:W}] For each $i\in[4]$, we define $p_i$ as the index of the row of $W(\lambda)$ corresponding to the first row of $U_{\lambda_i}(i)$. If $\lambda_i\notin\{0,1\}$, then we also define $q_i=p_i+1$ (i.e., the index of the row of $W(\lambda)$ corresponding to the second row of $U_{\lambda_i}(i)$).

Since $W(\lambda)=\overline{W(\overline\lambda)}$ and the complement of each matrix in $\FDCircR$ represents the same configuration as some matrix in $\FDCircR$, it follows that the lemma holds for $\lambda$ if and only if it holds for $\overline\lambda$.

We consider several cases. In each case, it is assumed that none of the preceding cases holds.

\begin{enumerate}[{Case }1:]
 \item\label{caseW:01}\emph{$\lambda$ is binary.} Thus, $W(\lambda)=\lambda\miop\MIV$ which, by Theorem~\ref{thm:circR}, represents the same configuration as some matrix in $\ForbRow$. Hence, $W(\lambda)$ represents the same configuration as $\MIV$, $\overline\MIV$, $\MVast$, or $\overline{\MVast}$. By the proof of Lemma~\ref{lem:forb_Circ1R}, $W(\lambda)$ contains $\ZetaDosAst$, $\CoZetaDosAst$, $\ZetaSeis$, or $\CoZetaSeis$ as a configuration.

 \item\label{caseW:2or3}\emph{$2$ or $3$ occurs in $\lambda$}. Let $i\in[3]$ such that $\lambda_i\in\{2,3\}$. By complementing $\lambda$ (if necessary), we assume, without loss of generality, that $\lambda_i=2$. If we let $\sigma=\langle 2i+2,2i,2i-2,2i-1,2i+1\rangle$ or $\sigma=\langle 2i+1,2i-1,2i+3,2i,2i+2\rangle$ (where arithmetic is modulo $6$), depending on whether $\lambda_4$ is $0$ or $1$, respectively, then $W(\lambda)_{\langle p_4,p_i,q_i\rangle,\sigma}=\ZetaCuatroAst$. 
 
 \item\emph{$\lambda_1\neq\lambda_2$.} As Case~\ref{caseW:2or3} does not hold, by complementing $\lambda$ (if necessary), we assume, without loss of generality, that $\lambda_1=0$ and $\lambda_2=1$. If we let $\sigma=\langle 5,6,2,4,3\rangle$ or $\sigma=\langle 6,5,1,3,4\rangle$, depending on whether $\lambda_4$ is $0$ or $1$, respectively, then $W(\lambda)_{\langle 2,p_4,1\rangle,\sigma}=\ZetaCuatroAst$.
  
 \item\emph{None of the preceding cases holds.} In this case, $\lambda_1\in\{0,1\}$, $\lambda_2=\lambda_1$, and $\lambda_3\in\{4,5\}$. By complementing $\lambda$ (if necessary), we may assume, without loss of generality, that $\lambda_1=\lambda_2=0$. Thus, $W(\lambda)_{\id_4,\langle 2,4,5,6\rangle}=\ZetaUnoAst$.
\end{enumerate}
As we have exhausted all the cases, the proof of the lemma is complete.\end{proof}

\subsection{Proof of Lemma~\ref{lem:X}}\label{app:lem:X}

\begin{proof}[Proof of Lemma~\ref{lem:X}] If $\alpha=\alpha_1\alpha_2\alpha_3\alpha_4$ is a binary sequence of length $4$, we define the \emph{swap of $\alpha$}, denoted by $\alpha^\sharp$, as the sequence $\alpha_3\alpha_4\alpha_1\alpha_2$. We begin by proving the following claim.
\begin{enumerate}[{Claim}:]
 \item\emph{If $\alpha$ is a binary sequence of length $4$ and $\beta$ is any sequence that arises from $\alpha$ by applying a sequence of complementation and swap operations and $i\in[3]$, then $X_i(\beta)$ represents the same configuration as some matrix arising from $X_1(\alpha)$ by complementing some (eventually zero) rows.} We first notice that all three matrices $X_1(\beta)$, $X_2(\beta)$, and $X_3(\beta)$ represent the same configuration because it is straightforward to verify that, for each $i\in[3]$, $X_{i+1}(\beta)_{\langle 2,3,1,4,5,6\rangle,\langle 3,4,5,6,1,2\rangle}=X_i(\beta)$ (where $i+1$ is modulo $3$). Thus, we assume, without loss of generality, that $i=1$. The claim follows by induction from the following facts that hold for each binary sequence $\gamma$ of length $4$: $X_1(\overline{\gamma})_{\langle 1,2,3,4,6,5\rangle}=000011\miop X_1(\gamma)$ and $X_1(\gamma^\sharp)_{\langle 1,3,2,4,5,6\rangle,\langle 1,2,5,6,3,4\rangle}=X_1(\gamma)$. 
\end{enumerate}

Let $\alpha$ be a binary sequence of length $4$ such that $\alpha\notin\{0000,0011,1100,1111\}$. Because of the above claim, it suffices to prove the lemma for $i=1$. Moreover, as Theorem~\ref{thm:circR} ensures that any matrix that arises from $\ForbRow$ by complementing some of its rows represents the same configuration as some matrix in $\ForbRow$, the lemma holds for $\alpha$ if and only if it holds for $\overline\alpha$ or $\alpha^\sharp$. In particular, by complementing $\alpha$ (if necessary), we assume, without loss of generality, that $\alpha_1=0$. We consider a few cases.
\begin{enumerate}[{Case }1:]
 \item\label{caseX:1} \emph{$\alpha=0001$.} In this case, $X_1(\alpha)_{\langle 3,4,5\rangle,\langle 1,4,5,6\rangle}=\overline{\MIast 3}$.

 \item\label{caseX:2} \emph{$\alpha=0101$.} In this case, $X_1(\alpha)_{\langle 2,6,3,4\rangle, \langle 3,4,6,5,2\rangle}=0001\miop\MIast 4$.
 
 \item\label{caseX:3} \emph{$\alpha=0x10$ for some $x\in\{0,1\}$.} In this case, $X_1(\alpha)_{\langle 3,4,5\rangle,\langle 5,6,2,3\rangle}=\MIast 3$.

 \item\label{caseX:4} \emph{$\alpha=0100$.} This case reduces to Case~\ref{caseX:1} by replacing $\alpha$ by $\alpha^\sharp{}\mathbin{(=}0001)$.

 \item\label{caseX:5} \emph{$\alpha=011x$ for some $x\in\{0,1\}$.} This case reduces to Case~\ref{caseX:3} by replacing $\alpha$ by $\overline{\alpha^\sharp}{}\mathbin{(=}0\overline x10)$.
\end{enumerate}
Suppose, for a contradiction, that none of the preceding cases holds. We claim that $\alpha_4=1$. In fact, if $\alpha_4=0$, then, since neither Case~\ref{caseX:3} nor Case \ref{caseX:4} holds, necessarily $\alpha_3=0$ and $\alpha_2=0$; i.e., $\alpha=0000$, a contradiction. Moreover, we also claim that $\alpha_3=0$. In fact, if $\alpha_3=1$, then, as Case~\ref{caseX:5} does not hold, necessarily $\alpha_2=0$; i.e., $\alpha=0011$, a contradiction. Hence, as we are assuming $\alpha_1=0$, either Case~\ref{caseX:1} or Case~\ref{caseX:2} holds, a contradiction. This contradiction proves that the above analysis covers all the possible cases and so the proof of the lemma is complete.\qedhere

\end{proof}

\subsection{Proof of Lemma~\ref{lem:Y}}\label{app:lem:Y}

\begin{proof}[Proof of Lemma~\ref{lem:Y}] Clearly, if $\gamma'$ is the shift of $\gamma$, then $Y(\gamma')_{\langle 3,1,2,4,5,6\rangle,\langle 5,6,1,2,3,4\rangle}=Y(\gamma)$. Thus, the lemma holds for $\gamma$ if and only if it holds for $\gamma'$. It is also clear that $Y(\overline\gamma)_{\langle 1,2,3,4,6,5\rangle}=000011\miop Y(\gamma)$. Hence, as Theorem~\ref{thm:circR} ensures that any matrix that arises from a matrix of $\ForbRow$ by complementing some rows is equivalent to some matrix in $\ForbRow$, the lemma holds for $\gamma$ if and only if it holds for $\overline\gamma$. Therefore, as $\gamma$ is nonconstant, by applying shifts and complementations, we assume, without loss of generality, that $\gamma=001$ and, in this case, $Y(\gamma)_{\langle 1,4,2,5\rangle,\langle 1,2,4,3,5\rangle}=0001\miop\MIast 4$.\end{proof}

\end{document}